\def\H{{\mathcal{H}}}
\def\Hfunc{{\mathcal{A}}}
\def\M{{\mathcal{M}}}
\def\oC{\overline{\mathcal{C}}}
\def\oM{\overline{\mathcal{M}}}
\def\oH{\overline{\mathcal{H}}}
\def\cH{{\mathcal{H}}}
\def\RR{\mathbb{R}}
\def\CC{\mathbb{C}}
\def\ZZ{\mathbb{Z}}
\def\NN{\mathbb{N}}
\def\QQ{\mathbb{Q}}
\def\PP{\mathbb{P}}
\def\log{{\rm log}}
\def\oddd{{\rm odd}}
\def\odd{{-}}
\def\even{{\rm even}}
\def\even{{+}}
\def\spin{{\rm spin}}
\def\spin{{\pm}}
\def\Vol{{\rm Vol}}
\def\v{{\rm vol}}
\def\cont{{\rm cont}}
\def\ind{{\rm ind}}
\theoremstyle{definition}
\newtheorem{definition}{Definition}[section]
\newtheorem{notation}[definition]{Notation}
\newtheorem{example}[definition]{Example}
\newtheorem{remark}[definition]{Remark}
\theoremstyle{plain}
\newtheorem{theorem}[definition]{Theorem}
\newtheorem{proposition}[definition]{Proposition}
\newtheorem{lemma}[definition]{Lemma}
\newtheorem{corollary}[definition]{Corollary}
\newcommand{\pdv}[2]{\frac{\partial #1}{\partial #2}}
\newcommand{\OP}{\mathrm{OP}}
\newcommand{\SP}{\mathrm{SP}}
\newcommand{\Sergeev}{\mathrm{Se}}
\newcommand{\Sym}{\mathfrak{S}}
\newcommand{\Spin}{\widetilde{\mathfrak{S}}}
\newcommand{\ASpin}{\widetilde{\mathrm{A}}}
\newcommand{\Cliff}{\mathrm{Cl}}
\newcommand{\SSym}{\mathrm{B}}
\newcommand{\Hur}{\mathrm{Hur}}
\newcommand{\Moller}{\mathrm{M}}
\newcommand{\ii}{\mathrm{i}}
\renewcommand{\Re}{\operatorname{Re}}
\renewcommand{\Im}{\operatorname{Im}}
\newcommand{\step}[1]{\noindent\underline{\textit{#1}}}
\renewcommand{\=}{\: =\: }
\newcommand{\defis}{\: :=\: }
\newcommand{\+}{\,+\,}
\newcommand{\meno}{\,-\,}
\begin{document}

\title{Cylinder counts and spin refinement of area Siegel--Veech constants}
\author{Jan-Willem van Ittersum}
\address{Max-Planck-Institut f\"ur Mathematik, Vivatsgasse 7, 53111 Bonn, Germany}
\email{ittersum@mpim-bonn.mpg.de}
\author{Adrien Sauvaget}
\address{Laboratoire AGM, 2 avenue Adolphe Chauvin, 95300, Cergy-Pontoise, France}
\email{arien.sauvaget@math.cnrs.fr}
\date{\today}

\begin{abstract} We study the area Siegel--Veech constants of components of strata of abelian differentials with even or odd spin parity. We prove that these constants may be computed using either: (I) quasimodular forms, or (II) intersection theory. These results refine the main theorems of \cite{CheMoeZag} and~\cite{CheMoeSauZag} which described the area Siegel--Veech constants of the full strata. Along the proof of (II), we establish a new identity for Siegel--Veech constants of cylinders.
\end{abstract}

\maketitle

\setcounter{tocdepth}{1}
\tableofcontents

\section{Introduction}

\subsection{Relations between cylinder Siegel--Veech constants }

Let $g,n$ be non-negative integers satisfying $2g-2+n{> 0}$. Let $\M_{g,n}$ and $\oM_{g,n}$ be the moduli spaces of smooth and stable complex curves of genus $g$ with $n$ distinct markings. We denote by $p:\oH_{g,n}\to \oM_{g,n}$  the {\em Hodge bundle}, i.e., the vector bundle whose fiber at the point $(C,x=\{x_1,\ldots,x_n\})$ is the space $H^0(C,\omega_C)$ of abelian differentials on $C$. If $\mu=(m_1,\ldots,m_n)$ is a vector of positive integers satisfying 
$$|\mu| \overset{\rm def}{=} \sum_{i=1}^n m_i= 2g-2+n,$$ 
then we denote by $\H(\mu)$ the {\em stratum of abelian differentials of type $\mu$}, i.e., the sub-space of $\oH_{g,n}$ of differentials $(C,x,\eta)$ satisfying: $C$ is smooth, and ${\rm ord}_{x_i}(\eta)=m_i-1$ for all $1\leq i\leq n$. This space is equipped with a canonical measure $\nu$, called the {\em Masur--Veech measure}. If $X$ is a component of $\H(\mu)$, then we denote by
$$
\Vol(X)=(4g-2+2n)\times \nu\left\{ (C,x,\eta)\in X, \text{ s.t.\ } \frac{\ii}{2} \int_C \eta\wedge \bar{\eta} <1 \right\},
$$
the 
{\em Masur--Veech}  volume of $X$. This volume is finite and rational up to a power of~$\pi$ by \cite{Mas}, \cite{Vee}, \cite{EskOko}, and~\cite{EskOkoPan}. 

Let $(C,x,\eta)$ be a differential of type~$\mu$. The differential $\eta$ defines a flat metric with trivial holonomy on $C\setminus\{x_1,\ldots,x_n\}.$  Each $x_i$ is a conical singularity of this metric with angle $m_i(2\pi)$. The union of closed geodesics of a given homotopy type in this open surface forms a cylinder $Z$ whose {\em width} $w(Z)$ is the length of any geodesics in the homotopy class. This cylinder is bounded on each side by at least one singularity
. We denote
\begin{eqnarray*}
\mathcal{N}(C,\eta,L)_0 &=&  \sum_{Z \text{ s.t.\ } w(Z)< L} \frac{{\rm area}(Z)}{{\rm area}({C})}, \\
 \mathcal{N}(C,\eta,L)_{cyl} &=&  \sum_{Z \text{ s.t.\ } w(Z)< L} \frac{1}{{\rm area}({C})}.
\end{eqnarray*}
Here we consider a refinement of the count of cylinders: for all $1\leq i\leq n$, we denote 
\begin{eqnarray*}
\mathcal{N}(C,\eta,L)_{i} &=& \frac{1}{2} \sum_{Z \text{ s.t.\ } w(Z)< L} \frac{f_i(Z)}{{\rm area}({C})},
\end{eqnarray*}
where $f_i(Z)=0, 1,$ or $2$ if $x_i$  bounds $Z$ on $0, 1,$ or $2$ sides. 
For a connected component $X$ of $\H(\mu)$, there exist constants $c_0(X)$, $c_{cyl}(X)$, $c_1(X),\ldots, c_n(X)$  satisfying
\begin{eqnarray*}
&\mathcal{N}(C,\eta,L)_i\underset{L\to \infty}{\sim} c_i(X) \cdot \pi L^2
\end{eqnarray*}
for almost all abelian differentials in $X$~\cite{Vee1,EskMasZor}. These constants are elements of $\pi^{-2} \QQ$ and may be expressed as ratios between volumes of connected components of strata (see Section~\ref{sec:chains}). In~\cite{Vor} Vorobets showed the following identity holds:
$$c_{cyl}(X)=(2g-2+n)c_0(X).$$
 Our first theorem is a refinement of this identity.
\begin{theorem}\label{th:cylinders}
For all $1\leq i\leq n$, we have $c_i(X)=m_i\cdot c_0(X).$ 
\end{theorem}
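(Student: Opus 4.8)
The plan is to prove the identity inside the volume-theoretic formalism. By Section~\ref{sec:chains}, each of the constants $c_0(X)$, $c_{cyl}(X)$ and $c_i(X)$ is a finite sum, over admissible cylinder configurations (``chains'') $\mathcal{C}$, of terms $w_\bullet(\mathcal{C})\,\Vol(X_{\mathcal{C}})/\Vol(X)$, where $X_{\mathcal{C}}$ is the boundary stratum obtained by collapsing the degenerating cylinder of $\mathcal{C}$ and merging the zeros lying along its two sides. Here $w_0$ is the area weight of~\cite{Vor,EskMasZor}, $w_{cyl}$ is the plain count weight, and $c_i$ uses the weight $\tfrac12 f_i(\mathcal{C})$ recording on how many of the two sides the zero $x_i$ appears. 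The theorem thus becomes the family of identities $\sum_{\mathcal{C}} \tfrac12 f_i(\mathcal{C})\,w_{cyl}(\mathcal{C})\,\Vol(X_{\mathcal{C}}) = m_i\sum_{\mathcal{C}} w_0(\mathcal{C})\,\Vol(X_{\mathcal{C}})$, which I would attack by first invoking Vorobets' identity of~\cite{Vor} to pass from the area weight $w_0$ to the count weight $w_{cyl}$, thereby reducing to the statement $c_i(X)=\tfrac{m_i}{2g-2+n}\,c_{cyl}(X)$, i.e.\ that the zero $x_i$ claims a fraction $m_i/(2g-2+n)$ of the total cylinder count.

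The factor $m_i$ should come from a local computation at $x_i$. Since $\mathrm{ord}_{x_i}(\eta)=m_i-1$, the flat metric has a cone point of angle $2\pi m_i$ at $x_i$, so the horizontal foliation has exactly $2m_i$ prongs there, alternating between $m_i$ upward sectors (sitting on the bottom of a cylinder) and $m_i$ downward sectors (on the top of a cylinder). In the inverse of the collapsing operation---growing a horizontal cylinder back out of a surface in $X_{\mathcal{C}}$---the new cylinder may be attached along any of these prongs, so that the reconstruction map for configurations bounded by $x_i$ is generically $m_i$-to-one on each side. Quantifying this multiplicity at the level of Masur--Veech measures, in the spirit of the prong-rotation symmetry governing the principal boundary in~\cite{EskMasZor}, is what should convert the unweighted volume sum into $m_i$ times itself.

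The main obstacle is that $f_i(\mathcal{C})$ counts \emph{sides} (a number in $\{0,1,2\}$) rather than \emph{prongs}: a single boundary component of the cylinder may run through $x_i$ at several of its prongs, so the raw prong multiplicity $m_i$ overcounts the side weight $\tfrac12 f_i$, and the discrepancy cannot be absorbed by a pointwise bijection of cylinders. This is exactly where the chain formalism of Section~\ref{sec:chains} is indispensable: configurations in which $x_i$ bounds a cylinder at several prongs of one and the same side correspond to deeper boundary strata, and I expect the identity to close only after these multi-incidence chains are reorganized and their contributions compared through a genuine relation among the $\Vol(X_{\mathcal{C}})$ for neighbouring strata, rather than term by term. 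As a consistency check, summing the resulting identities over $i$ and using $\sum_i m_i = 2g-2+n$ must reproduce Vorobets' identity $c_{cyl}(X)=(2g-2+n)\,c_0(X)$, confirming that the theorem refines it by distributing the cylinder count among the zeros in proportion to their orders.
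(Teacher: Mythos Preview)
Your proposal is a plan with an explicitly acknowledged gap, not a proof. You correctly locate the source of the factor~$m_i$ in the cone angle at~$x_i$ (there are $m_i$ upward and $m_i$ downward horizontal prongs), and you correctly observe that $f_i$ counts \emph{sides} rather than prongs, so the naive $m_i$-to-one surgery overcounts whenever a single boundary component of a cylinder visits $x_i$ at several prongs. You then defer the resolution to an unspecified ``reorganization'' of chains and a ``genuine relation among the $\Vol(X_{\mathcal{C}})$ for neighbouring strata''. That is precisely the hard part, and you do not carry it out; there is no evident termwise cancellation or bijection at the level of the Eskin--Masur--Zorich configuration sums that makes the multi-prong contributions add up to $m_i$ times a single contribution.

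The paper's route is quite different and does not attempt a direct comparison of $c_i$ with $c_0$ (or $c_{cyl}$) through cylinder combinatorics. It introduces the intersection number
\[
d_i(X)\;=\;\int_{\PP\overline{X}} \beta_i\cdot\delta_0\,, \qquad \beta_i \;=\; \xi^{2g-2}\prod_{j\neq i} m_j\,\psi_j\,,
\]
on the closure of the projectivized stratum and proves (Theorem~\ref{th:mainint}) that
\[
c_i(X)\;=\;\frac{-m_i}{4\pi^2}\,\frac{d_i(X)}{\v(X)}\,.
\]
The chain formalism of Section~\ref{sec:chains} \emph{is} used, but for a different purpose than you envision: Proposition~\ref{pr:cchains} writes $c_1(\mu)$ as a sum over chains (encoding Eskin--Masur--Zorich configurations), Proposition~\ref{pr:dint} writes $d_1(\mu)$ as a sum over rational backbone graphs coming from the boundary geometry of $\PP\oH(\mu)$, and the bulk of Section~\ref{sec:chains} constructs an interpolating family of ``expanded chains'' together with maps $F_i$ and $F$ to match the two sums. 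The factor $m_i$ in the end does not come from a prong count; it comes from the fact that $\beta_i$ omits the factor $m_i\psi_i$. Theorem~\ref{th:cylinders} then follows because $d_i(X)$ is \emph{independent of~$i$} --- an intersection-theoretic fact cited from~\cite{CheMoeSauZag}, not a combinatorial one about cylinders. In short, the paper trades the combinatorial difficulty you identified (and could not resolve) for an intersection-theoretic computation plus a known symmetry of the classes $\beta_i\cdot\delta_0$.
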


\begin{example}
A standard class of abelian differentials is provided by coverings of the square torus ramified above a single point -- {\em square-tiled surfaces}.  The differential on the covering surface is the pull-back of the unique (up to scalar) differential on the torus, and its singularities are determined by the orders of ramification. The polygon of Figure~\ref{fig:dessin} is a square-tiled surface in~$\cH(4,2)$: the surface is obtained by identifying the edges of the polygon with the same labels. The red/blue vertices of the polygon are identified along this process to produce conical singularities of angles $8\pi$ and $4\pi$ respectively. In this example, the red cylinder is bounded twice by the red singularity, while the green one is bounded once by each singularity. 

 \begin{figure}[ht]
 \vspace{-5pt}
\includegraphics[scale=0.28]{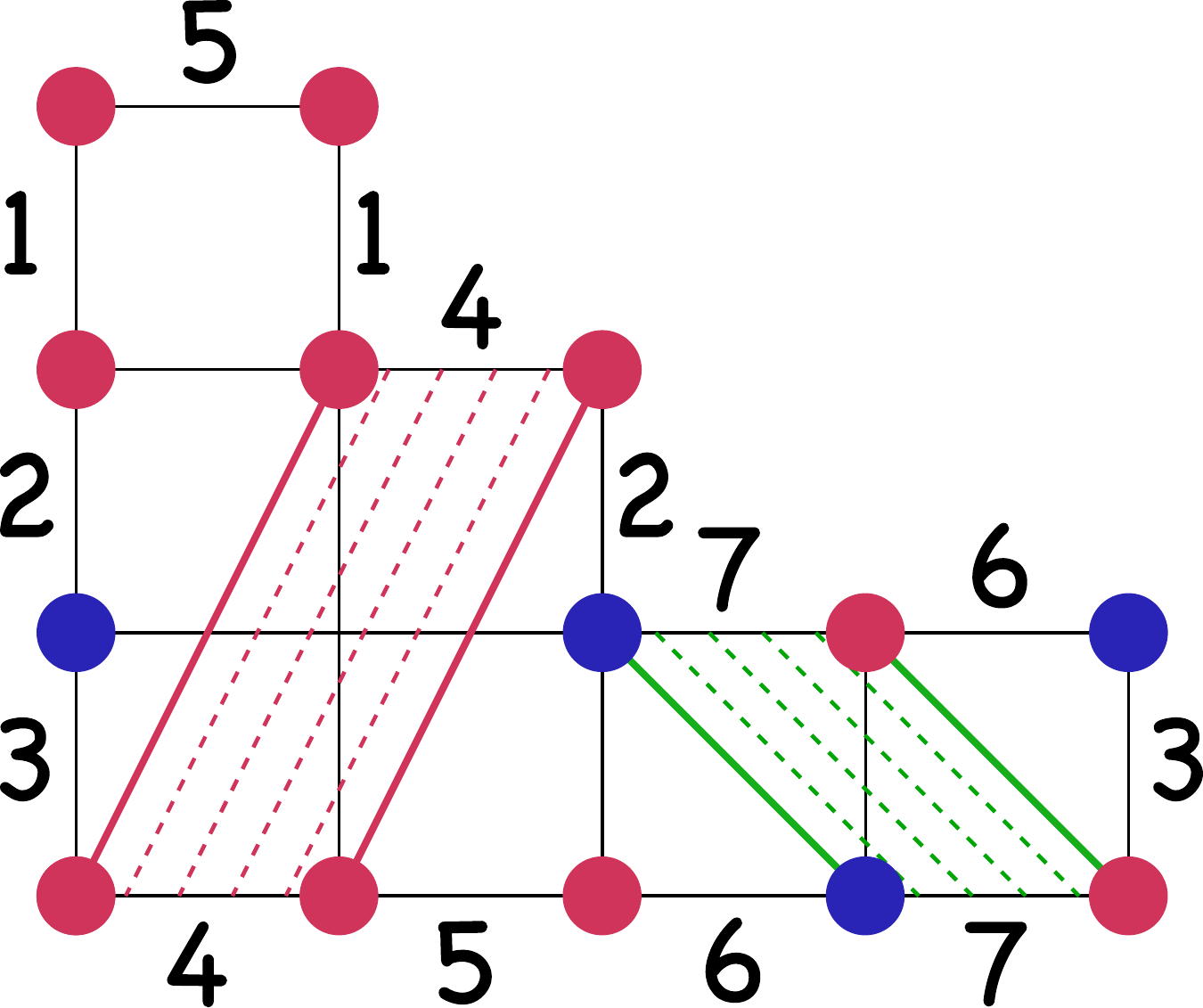}
\vspace{-10pt}
\caption{\label{fig:dessin} Square-tiled surface in $\cH(4,2)$. }
\end{figure}
\end{example}
Theorem~\ref{th:cylinders} implies that if we choose a random boundary of a cylinder of large width for a generic deformation of this square-tiled surface, then the probability that it contains the red singularity is approximately $\frac{2}{3}$.

\subsection{Spin parity of abelian differentials}

Here we assume that the entries of~$\mu$ are all odd. An element $(C,x,\eta)$ in $\H(\mu)$ determines a canonical {\em spin structure}, i.e.\ a line bundle $L\to C$ such that $L^{\otimes 2}\simeq \omega_C$.  This line bundle is defined as $L=\mathcal{O}_C\left(\frac{m_1-1}{2} x_1 + \ldots + \frac{m_n-1}{2} x_n \right)$. The {\em sign}---or {\em Arf invariant}---of an abelian differential in $\mathcal{H}(\mu)$ equals 
$(-1)^{h^0(C,L)}$. By classical results of Mumford and Atiyah, this sign is constant in connected families of spin structures~\cite{Mum3,Ati}. Thus we denote by $\H(\mu)^\even/\H(\mu)^\odd$ the components of $\H(\mu)$ of even/odd differentials.

\begin{remark} 
{The components $\H(\mu)^\even$ and $\H(\mu)^\odd$ may be disconnected.}
Indeed, when $\mu=(2g-1)$ or $(g,g)$, then $\H(\mu)$ contains a connected component of abelian differentials supported on hyperelliptic curves. 
This connected component may be included in the even or odd component depending on the value of $g$ (see~\cite{KonZor}). {Hence, the space $\H(\mu)$ may have up to 3 connected components.}
As Theorem~\ref{th:cylinders} holds trivially on hyperelliptic components, 
{it suffices to study $c_i(X)$ for $X=\H(\mu)^\even$ and $X=\H(\mu)^\odd$.}
\end{remark} 

If $X$ is a union of components of $\H(\mu)$, and $0\leq i \leq n$, then $c_i(X)$ stands for the average of the Siegel--Veech constants of its connected components (weighted by the Masur--Veech volume). We denote by $c_i(\mu)$ the Siegel--Veech constants of~$\cH(\mu)$. Moreover, we write
\begin{align*}
\Vol(\mu) &= \Vol(\H(\mu)), & \Vol^\spin(\mu)&=\Vol\left(\cH(\mu)^\even\right)-\Vol\left(\cH(\mu)^\odd\right),\\
c_i(\mu) &= \Vol(\H(\mu)), & c_i^\spin(\mu)&=c_i\left(\cH(\mu)^\even\right)-c_i\left(\cH(\mu)^\odd\right).
\end{align*}
 The functions $\Vol,c_0,$ and $\Vol^\spin$ may be expressed either:
\begin{enumerate}
\item[(I)] As the value $q\to 1$ of the $q$-expansion of quasimodular forms (see~\cite{EskOko}, \cite{CheMoeZag}, and~\cite{EskOkoPan});
\item[(II)] As intersection numbers on~$\PP\oH(\mu)$, the Zariski closure of the projectivization of $\H(\mu)$
 in the projectivized Hodge bundle (see~\cite{CheMoeSauZag}).
  \end{enumerate} 
We extend these two results to the function $c_0^\spin$ in Theorems~\ref{th:withqmf} and~\ref{th:intspin} respectively. 
\begin{remark}
Several results for the function~$c_0$ may be transposed to~$c_0^\spin$ with parallel arguments. However, certain ingredients were missing in the previous works to obtain the complete description of~$c_0^\pm$. We emphasize two arguments that play an important role:
\begin{itemize}[leftmargin=25pt]
\item The quasimodular forms approach relies on the description of the character table of the Sergreev group (Propositions~\ref{prop:charSergeev} and~\ref{prop:charSergeev0}), while the computation of $\Vol^\spin$ only relied on the description of the irreducible spin \emph{super}representations. As a result, the expression of the differential operator~$\partial_2$ appearing in Theorem~\ref{th:withqmf} is different from the conjectural expression of~\cite[Section~10.3]{CheMoeSauZag}.
\item The geometric counterpart relies on the original description of cylinder configurations by Eskin--Masur--Zorich. We show that the expression of the area Siegel--Veech constants as intersection numbers is essentially equivalent to the statement of Theorem~\ref{th:cylinders} above. It is interesting to remark that this approach also recovers the result of~\cite{CheMoeSauZag} on $c_0$ without using quasimodular forms.  
 \end{itemize}
 \end{remark}
 
\subsection*{Acknowledgements} The authors thank Elise Goujard, Martin M\"oller, Don Zagier, and Anton Zorich for interesting discussions on the topic. The second author is especially grateful to Dawei Chen for explaining the correspondence between cylinder configurations of Eskin, Masur, and Zorich and twisted graphs. This project grew out of discussions on spin Hurwitz numbers started in a reading group proposed by Martijn Kool at Utrecht University.

\section{Weighted spin Hurwitz numbers and quasimodular forms}

We compute $c_0^\spin$ as the limit of weighted spin Hurwitz numbers of the torus of large degree. By the work of \cite{EskOkoPan} generating series of spin Hurwitz numbers can be expressed in terms of quasimodular forms. We extend the approach of \cite{CheMoeZag} for $c_0$ to compute $c_0^\spin$ as the limiting value $q\to 1$ in the $q$-expansion of a quasimodular form, or, more precisely, as the leading term of the growth polynomial associated to a quasimodular form. We proceed as follows:
\begin{itemize}[leftmargin=25pt]
\item In \cref{sec:spin} we introduce the spin $q$-bracket and the symmetric functions~$p_k$. These functions serve as an analogue of both the shifted symmetric functions~$Q_k$ and the hook-length moments~$T_p\mspace{1mu}$. In particular, its brackets are quasimodular forms and the action of the $p_k$ inside strict brackets can be encoded by differential operators.
\item In \cref{sec:growth} we recall the growth polynomials of quasimodular forms
and relate this growth to the aforementioned differential operators. 
\item In \cref{sec:Sergeev} we recall the representation theory of the spin symmetric group, and explain the representation theory of the so-called Sergeev group.
\item In \cref{sec:spinHurwitz} we refine the aforementioned result of \cite{EskOkoPan}. More precisely, we compute the generating series of \emph{weighted} spin Hurwitz numbers. Here, we extensively make use of the character table of the Sergeev group. We find that strict brackets of the $p_k$ compute weighted Hurwitz numbers. 
\item In \cref{sec:recursion} we take all these results together to prove the evaluation of $c_0^\spin(\mu)$ as the value $q=1$ of the $q$-expansion of quasimodular forms.
\end{itemize}

\subsection{Spin bracket and quasimodular forms}\label{sec:spin}

Denote by $\SP$ the set of all \emph{strict} partitions of integers (i.e., partitions where all part sizes are different) and by $\OP$ the set of all \emph{odd} partitions (i.e., partitions where all part sizes are odd). For $f:\SP\to \CC$, denote the \emph{spin $q$-bracket} $\langle f \rangle\in \CC[\![q]\!]$ by
\[ \langle f \rangle \defis 
\frac{\sum_{\lambda \in \SP} \,(-1)^{\ell(\lambda)}\, f(\lambda) \, q^{|\lambda|}}
{\sum_{\lambda \in \SP}(-1)^{\ell(\lambda)}\,q^{|\lambda|}} ,\]
where $|\lambda|:=\sum_{i}\lambda_i$ denotes the size of $\lambda$. The denominator is given by
\[\sum_{\lambda \in \SP}(-1)^{\ell(\lambda)}\,q^{|\lambda|}\=\prod_{m\geq 1}(1-q^m).\]

For a partition $\lambda$ and a positive integer $m$, we denote by $r_m(\lambda)$ the number of parts of $\lambda$ equal to $m$. Observe that for strict partitions $\lambda$ we have $r_m(\lambda)\in \{0,1\}$.  Moreover, we let
 \begin{align*}
 p_k(\lambda)&\defis \sum_{i}\lambda_i^k \= \sum_{m=1}^\infty m^k\,r_m(\lambda), && (k\in \ZZ) \\
 \mathbf{p}_k &\defis -\tfrac{1}{2}\zeta(-k)\+ p_k && (k\in \ZZ_{\geq 0})
 \end{align*}
 be the \emph{symmetric power sums} (with an additional constant). Note that this constant equals $-\tfrac{1}{2}\zeta(-k) = \frac{B_{k+1}}{2(k+1)},$ with $B_{k+1}$ the $(k+1)$th Bernoulli number. Write $\Lambda=\CC[\mathbf{p}_1,\mathbf{p}_3,\mathbf{p}_5,\ldots]$ for the \emph{symmetric algebra} and assign to $\mathbf{p}_i$ weight $i+1$. Then, for all $f\in \Lambda,$ the spin bracket $\langle f\rangle$ is known to be a quasimodular form \cite[Section~3.2.2]{EskOkoPan}, which is made precise in the following result. 
 Recall that the \emph{Eisenstein series}~$G_k$, given by
 \begin{align}\label{eq:eis} G_k := -\frac{B_k}{2k}+\sum_{m,r\geq 1} m^{k-1} q^{mr} \qquad (k\geq 2 \text{ even}),\end{align}
is an example of a quasimodular form of weight~$k$, and that every quasimodular form is a polynomial in these series: the space of quasimodular forms~$\widetilde{M}$ (for the full modular group $\mathrm{SL}_2(\ZZ)$) is given by $\widetilde{M} =\QQ[G_2,G_4,G_6].$  For an introduction to quasimodular forms, see \cite{Zag}.

In order to state this result we introduce \emph{connected} brackets, following \cite[Section~11]{CheMoeZag}. The \emph{connected spin $q$-bracket} is the multilinear map $(\CC^\SP)^{\otimes n} \to \CC[\![q]\!]$, defined by
\begin{align}\label{eq:connectedq}\langle\, f_1 \mid \cdots \mid f_n\,\rangle \defis \sum_{\alpha\in \Pi(n)}\mu(\alpha)\prod_{A\in \alpha}\biggl\langle \prod_{a\in A} f_a\biggr\rangle,\end{align}
where $\Pi(n)$ is the set of set partitions of $[\![1,n]\!]:=\{1,\ldots,n\}$, and $\mu$ is the corresponding M\"obius function $\mu(\alpha) = (-1)^{\ell(\alpha)-1}(\ell(\alpha)-1)!$ with $\ell(\alpha)$ the length (cardinality) of the partition~$\alpha$.  Let $u_1,u_3,\ldots$ be formal variables and for $\rho \in \OP$, write $u_\rho = \prod_{i} u_{\rho_i}$. Similarly, write $\mathbf{p}_\rho=\prod_{i} \mathbf{p}_{\rho_i}$. We introduce the generating series $\Psi$ (and $\Psi^\circ$) of (connected) brackets of symmetric power sums by
\begin{align*}
  \Psi(u_1,u_3,\ldots) &\defis \Bigl\langle \exp\Bigl(\sum_{k}   \mathbf{p}_k\, u_k\Bigr)\Bigr\rangle \= \sum_{n \geq 0} \frac{1}{n!} \sum_{\ell_1,\ldots,\ell_n} \langle \mathbf{p}_{\ell_1}  \cdots  \mathbf{p}_{\ell_n}\rangle \,  u_{\ell_1}\cdots u_{\ell_n} \,, \\
 \Psi^\circ(u_1,u_3,\ldots) &\defis \sum_{n > 0} \frac{1}{n!} \sum_{\ell_1,\ldots,\ell_n} \langle \mathbf{p}_{\ell_1} | \cdots | \mathbf{p}_{\ell_n} \rangle \, u_{\ell_1} \cdots u_{\ell_n} \,,
 \end{align*}
 where the sum is over all \emph{odd} integers $k$, and \emph{odd} $\ell_1,\ldots,\ell_n$ respectively. Then, by the properties of the connected bracket we have
$\exp \Psi^{\circ} = \Psi,$
 and in this notation the aforementioned result of \cite{EskOkoPan} is as follows.
\begin{proposition}\label{thm:quasimodular} We have
\begin{align*}
 \Psi^\circ(u_1,u_3,\ldots) &\= -\sum_{\substack{\rho \in \OP\\\rho\neq \emptyset}} D^{\ell(\rho)-1} G_{|\rho|-\ell(\rho)+2}\: u_\rho \,,
 \end{align*}
where $D=q\pdv{}{q}$ and $G_k$ is the Eisenstein series of weight~$k$ {\upshape(}see \eqref{eq:eis}{\upshape)}. 
\end{proposition}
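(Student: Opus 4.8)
The plan is to reduce the whole statement to a closed-form evaluation of the individual connected brackets $\langle \mathbf{p}_{k_1}\mid\cdots\mid\mathbf{p}_{k_n}\rangle$, and the key observation is that the spin $q$-bracket is a (signed) averaging functional for which the occupation numbers become formally independent. A strict partition is the same datum as a finite subset $S\subseteq\ZZ_{\geq 1}$, and in these terms $\langle f\rangle$ is the normalized sum over all such $S$ weighted by $\prod_{m\in S}(-q^m)$. Writing $X_m\defis r_m\in\{0,1\}$ for the occupation variable of the part $m$, the fact that this weight factorizes over the modes gives, for every finite $T$,
\[
\Bigl\langle \prod_{m\in T} X_m\Bigr\rangle \= \prod_{m\in T}\langle X_m\rangle,\qquad \langle X_m\rangle \= \frac{-q^m}{1-q^m}.
\]
This is exactly the formal independence of the $X_m$, and it is all the cumulant formalism needs. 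Since the connected bracket $\langle f_1\mid\cdots\mid f_n\rangle$ is, by its definition via the Möbius function $\mu(\alpha)=(-1)^{\ell(\alpha)-1}(\ell(\alpha)-1)!$, precisely the joint cumulant of $f_1,\dots,f_n$ for $\langle\,\cdot\,\rangle$, the vanishing of joint cumulants as soon as two distinct modes occur among the arguments reduces everything to a single mode.

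Concretely, each $\mathbf{p}_k$ is a constant plus $p_k=\sum_m m^k X_m$; for $n\geq 2$ the constants drop out of cumulants, so multilinearity together with the off-diagonal vanishing yields
\[
\langle \mathbf{p}_{k_1}\mid\cdots\mid\mathbf{p}_{k_n}\rangle \= \sum_{M\geq 1} M^{k_1+\cdots+k_n}\,\kappa_n(X_M),
\]
where $\kappa_n(X_M)$ is the $n$-th cumulant of the single variable $X_M$. The next step is to evaluate this single-mode cumulant. Because $X_M^{\,j}=X_M$ for $j\geq 1$, the moment generating function is $1+\langle X_M\rangle(e^s-1)$, so the cumulant generating function collapses to $\log(1-q^M e^{s})-\log(1-q^M)$; extracting the coefficient of $s^n/n!$ gives the clean formula $\kappa_n(X_M)= -\sum_{r\geq 1} r^{\,n-1}q^{Mr}$.

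Combining the two displays, I get $\langle \mathbf{p}_{k_1}\mid\cdots\mid\mathbf{p}_{k_n}\rangle = -\sum_{M,r\geq 1} M^{k_1+\cdots+k_n}\,r^{\,n-1}q^{Mr}$ for $n\geq 2$, and the point is that this is exactly $-D^{\,n-1}G_{k_1+\cdots+k_n-n+2}$: each application of $D=q\,\pdv{}{q}$ multiplies the term $q^{mr}$ in $G_w=-\tfrac{B_w}{2w}+\sum_{m,r}m^{w-1}q^{mr}$ by $mr$, so $D^{n-1}G_w=\sum_{m,r}m^{w-1+(n-1)}r^{n-1}q^{mr}$, and the choice $w=k_1+\cdots+k_n-n+2$ matches the exponent of $M$; this weight is even and $\geq 2$ precisely because all $k_i$ are odd. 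For $n=1$ the same mode computation gives $\langle p_k\rangle=-\sum_{M,r}M^k q^{Mr}=-G_{k+1}-\tfrac{B_{k+1}}{2(k+1)}$, and the additive constant $-\tfrac12\zeta(-k)=\tfrac{B_{k+1}}{2(k+1)}$ built into $\mathbf{p}_k$ is exactly what completes this to $\langle\mathbf{p}_k\rangle=-G_{k+1}$, explaining the role of the shift. Substituting these values into the definition of $\Psi^\circ$ and collecting the coefficient of each monomial $u_\rho$ (with $\rho\in\OP$), using the symmetry of the connected bracket to reorganize the sum over ordered tuples into a sum over partitions, then produces the stated closed form.

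The arithmetic heart is the single-mode cumulant, and there is no deep obstacle once the independence is spotted: this is the special feature of \emph{strict} partitions that lets one bypass the fermionic calculus underlying the original argument of \cite{EskOkoPan}. The points that genuinely require care are (i) the rigorous status of the formal manipulations, which is guaranteed by the grading by $|\lambda|$ (only finitely many partitions contribute to each power of $q$, so all identities live in $\QQ[\![q]\!]$, resp.\ in a power series ring in the auxiliary variable $s$), and (ii) the bookkeeping of the symmetry factors when passing from the ordered tuples $(k_1,\dots,k_n)$ in $\Psi^\circ$ to the unordered monomials $u_\rho$, which is the only nontrivial combinatorial ingredient in translating the bracket identity into the generating-series statement.
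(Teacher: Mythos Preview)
Your argument is correct and genuinely different from the paper's treatment: the paper does not prove this proposition at all but merely restates a result of \cite{EskOkoPan}, whose original proof goes through the infinite wedge and a fermionic computation of traces. Your route is more elementary. The key observation---that for \emph{strict} partitions the weight $(-1)^{\ell(\lambda)}q^{|\lambda|}$ factorizes as $\prod_{m\in S}(-q^m)$, so the occupation variables $X_m$ are formally independent under the spin bracket---turns the connected bracket into a joint cumulant, which then collapses to the diagonal. The single-mode cumulant $\kappa_n(X_M)=-\sum_{r\geq 1}r^{n-1}q^{Mr}$ is computed correctly, and the identification with $-D^{n-1}G_{\sum k_i-n+2}$ is straightforward. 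This gives a self-contained derivation of
\[
\langle \mathbf{p}_{k_1}\mid\cdots\mid\mathbf{p}_{k_n}\rangle \;=\; -\,D^{\,n-1}G_{k_1+\cdots+k_n-n+2}
\]
for all $n\geq 1$ (the $n=1$ case uses the constant shift in $\mathbf{p}_k$ exactly as you say), which is the content actually used downstream in Corollary~\ref{cor:pkf}.

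One point to make explicit: when you pass from this per-bracket identity to the generating series, the coefficient of $u_\rho$ in $\Psi^\circ$ is $\frac{1}{\mathrm{Aut}(\rho)}\langle \mathbf{p}_{\rho_1}\mid\cdots\mid\mathbf{p}_{\rho_{\ell(\rho)}}\rangle$, so your computation yields
\[
\Psi^\circ \;=\; -\sum_{\substack{\rho\in\OP\\ \rho\neq\emptyset}} \frac{1}{\mathrm{Aut}(\rho)}\,D^{\ell(\rho)-1}G_{|\rho|-\ell(\rho)+2}\,u_\rho\,,
\]
with the automorphism factor $\mathrm{Aut}(\rho)=\prod_m r_m(\rho)!$. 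The formula as printed in the proposition omits this factor; you flag the bookkeeping in your point~(ii) but do not carry it out, and carrying it out shows the discrepancy. This is harmless for the paper's applications---Corollary~\ref{cor:pkf} is deduced from the per-bracket identity and its $\varrho_{i,j}$ already carries the $1/\mathrm{Aut}(\rho)$---but you should state the final generating-series formula with the factor, or else note explicitly that the displayed proposition is to be read with it.
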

Note that this result determines  $\langle f\rangle$ for all $f\in \Lambda$. In particular, if $f$ is of weight~$k$, the spin bracket~$\langle f \rangle$ is a quasimodular form of weight~$k$. 

Two consequences of this result will be important for us: (i) a recursive formula for spin brackets, and (ii) the definition of a modified $q$-bracket for functions of the form $p_{-1}f$ with $f\in\Lambda$. This recursive formula should be compared with the recursive formula for the hook-length moments in \cite[Theorem~16.1]{CheMoeZag} and to a similar result for symmetric functions in the non-spin setting in \cite[Proposition 6.2.1]{vISymmetric}. 

\begin{corollary}\label{cor:pkf} For all $i,j\geq 0$ with $i$ even, the differential operators $\varrho_{i,j}:\Lambda\to\Lambda$ given by
\[ \varrho_{i,j} \= 
\sum_{\substack{\rho\in \OP\\\ell(\rho)=j\\|\rho|=i+j}} \frac{1}{\mathrm{Aut}(\rho)} \pdv{^{\ell(\rho)}}{\mathbf{p}_\rho} \qquad \qquad \Bigl(\mathrm{Aut}(\rho)\=\prod_{m\geq 1} r_m(\rho)!\Bigr),\]
are such that for all odd $k\geq 1$ and $f\in \Lambda$ one has
\[ \langle \mathbf{p}_k f\rangle \= -\sum_{i,j\geq 0} \langle\varrho_{i,j}(f)\rangle \, D^{j} G_{k+i+1}.\]
\end{corollary}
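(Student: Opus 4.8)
The plan is to establish the recursion at the level of the generating series $\Psi$ and $\Psi^\circ$ and then read it off from a single coefficient. By linearity in $f$ it suffices to prove the formula for monomials $f=\mathbf{p}_\sigma$ with $\sigma\in\OP$. The point of departure is that multiplication by $\mathbf{p}_k$ inside the bracket is implemented by differentiation in the formal variable $u_k$: since $\Psi=\bigl\langle\exp(\sum_m\mathbf{p}_m u_m)\bigr\rangle$, we have
\[ \partial_{u_k}\Psi \= \Bigl\langle \mathbf{p}_k\exp\Bigl(\textstyle\sum_m\mathbf{p}_m u_m\Bigr)\Bigr\rangle \= \sum_{\sigma\in\OP}\frac{\langle\mathbf{p}_k\mathbf{p}_\sigma\rangle}{\mathrm{Aut}(\sigma)}\,u_\sigma, \]
so the quantities $\langle\mathbf{p}_k\mathbf{p}_\sigma\rangle$ to be computed are precisely the coefficients of $\partial_{u_k}\Psi$.

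First I would differentiate the relation $\Psi=\exp\Psi^\circ$ to obtain the factorisation $\partial_{u_k}\Psi=(\partial_{u_k}\Psi^\circ)\,\Psi$, and then substitute \cref{thm:quasimodular} for $\Psi^\circ$. Computing $\partial_{u_k}u_\rho$ termwise and reindexing the resulting sum by $\rho\mapsto\rho\cup\{k\}$ rewrites $\partial_{u_k}\Psi^\circ$ as a series whose coefficient of $u_\rho$ is built from $D^{\ell(\rho)}G_{k+|\rho|-\ell(\rho)+1}$; under the substitution $j=\ell(\rho)$ and $i=|\rho|-\ell(\rho)$ this is exactly the weight $D^jG_{k+i+1}$ attached in \cref{cor:pkf} to the partitions summed over inside $\varrho_{i,j}$. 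Extracting the coefficient of $u_\sigma$ from the product $(\partial_{u_k}\Psi^\circ)\,\Psi$ then expresses $\langle\mathbf{p}_k\mathbf{p}_\sigma\rangle$ as a sum over splittings of $\sigma$ into a sub-partition $\rho$ — merged with the distinguished part $k$ into one connected factor and evaluated through \cref{thm:quasimodular} — and its complement $\sigma\setminus\rho$, which carries the residual bracket $\langle\mathbf{p}_{\sigma\setminus\rho}\rangle$. Finally I would recognise the operation ``delete the parts $\rho$ from $\mathbf{p}_\sigma$, weighted by $1/\mathrm{Aut}(\rho)$'' as the differential operator $\tfrac{1}{\mathrm{Aut}(\rho)}\,\partial^{\ell(\rho)}/\partial\mathbf{p}_\rho$ acting on $\mathbf{p}_\sigma$, and regroup the splittings by the pair $(i,j)$; this collapses the expression to $-\sum_{i,j\geq0}\langle\varrho_{i,j}(\mathbf{p}_\sigma)\rangle\,D^jG_{k+i+1}$, as claimed. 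An equivalent route bypasses generating functions and expands $\langle\mathbf{p}_k\mathbf{p}_\sigma\rangle$ directly via the moment--cumulant inversion of~\eqref{eq:connectedq}, summing over set partitions of the factors and isolating the connected block containing $\mathbf{p}_k$, whose value is again supplied by \cref{thm:quasimodular}.

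I expect the main obstacle to be the bookkeeping of the automorphism factors. \cref{thm:quasimodular} records the connected brackets with no explicit $\mathrm{Aut}$-weight, whereas passing between the monomial $u_\rho$ and an unordered selection of parts of $\sigma$ introduces the multiplicities $\mathrm{Aut}(\sigma)/\mathrm{Aut}(\sigma\setminus\rho)$ together with factors counting how the chosen parts and the distinguished part $k$ coincide. The heart of \cref{cor:pkf} amounts to checking that, after regrouping by $(i,j)$, all of these combine into the single normalisation $1/\mathrm{Aut}(\rho)$ built into $\varrho_{i,j}$; I would carry out this cancellation explicitly, with particular attention to the case where $k$ already occurs among the parts of $\sigma$, and would first test the identity on partitions $\sigma$ of length one, where the factorials must already be matched before treating the general monomial.
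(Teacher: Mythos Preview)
Your proposal is correct and follows essentially the same route as the paper: reduce to monomials $f=\mathbf{p}_\sigma$, use $\Psi=\exp\Psi^\circ$ together with \cref{thm:quasimodular}, and read off the coefficient attached to the partition $\nu=\sigma\cup\{k\}$. The paper's proof is a two-sentence sketch (``extract the coefficient of $u_\nu$ in \cref{thm:quasimodular} for some odd partition $\nu$ containing a part equal to $k$''), whereas you spell out the same extraction via $\partial_{u_k}\Psi=(\partial_{u_k}\Psi^\circ)\,\Psi$ and take more care with the $\mathrm{Aut}$ factors; the two are equivalent, and the bookkeeping you flag as the main obstacle is precisely what the paper leaves to the reader.
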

\begin{proof}
Without loss of generality, we assume that $f$ is a monomial. Then, by extracting the coefficient of $u_\nu$ in \cref{thm:quasimodular} for some odd partition $\nu$ containing a part equal to $k$, the result follows.
\end{proof}
Note that
\[\varrho_{i,0}=\delta_{i,0} \cdot \mathrm{Id} \qquad \text{and} \qquad \varrho_{0,j}=\frac{1}{j!}\pdv{^j}{\mathbf{p}_1^j} \qquad (i,j\geq 0).\]

Following the proof of \cite[Theorem~16.1]{CheMoeZag} in the spin setting, we deduce that a certain linear combination of brackets involving $p_{-1}$ is quasimodular. 
\begin{corollary}\label{thm:*} For all $f\in \Lambda_k$, \emph{the modified spin $q$-bracket}
\[ \langle f \rangle^* \defis \langle p_{-1}f\rangle - \langle p_{-1}\rangle \langle f\rangle - \frac{1}{24}\langle \partial_2(f)\rangle \]
is a quasimodular form of weight~$k$, where
$\partial_2 = \varrho_{0,1} = \pdv{}{\mathbf{p}_1}.$ More precisely, we have
\[\langle f \rangle^* = \sum_{i\geq 2,j\geq 0} \langle\varrho_{i,j}^*(f)\rangle \, D^j G_i\,\]
where $\rho_{i,j}^*=\rho_{i,j}+\delta_{i,2}\rho_{0,j+1}$ and $D=q\pdv{}{q}$.
\end{corollary}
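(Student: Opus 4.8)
The plan is to adapt the proof of \cite[Theorem~16.1]{CheMoeZag} to the strict-partition setting, treating $p_{-1}$ as the ``$k=-1$'' boundary case of the recursion in \cref{cor:pkf}. Since $p_{-1}\notin\Lambda$, the bracket $\langle p_{-1}f\rangle$ lies outside the scope of \cref{thm:quasimodular}, and the three terms defining $\langle f\rangle^*$ are engineered precisely to remove the non-quasimodular anomaly that surfaces at this boundary. The computation steering the whole argument is the closed form of the defect itself: directly from the definition of the spin bracket one has $\langle r_m\rangle=-q^m/(1-q^m)$, hence
\[\langle p_{-1}\rangle\=\sum_{m\geq1}\frac1m\,\langle r_m\rangle\=-\sum_{m,r\geq1}\frac{q^{mr}}{m}\=\log\prod_{m\geq1}(1-q^m),\]
which is patently not quasimodular, whereas its derivative $D\langle p_{-1}\rangle=-G_2-\frac1{24}$ (and therefore every $D^j\langle p_{-1}\rangle$ with $j\geq1$) is.

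First I would isolate the $i=j=0$ summand of \cref{cor:pkf}, which by $\varrho_{0,0}=\mathrm{Id}$ equals $-\langle f\rangle\,G_{k+1}$, and split $G_{k+1}=\tfrac12\zeta(-k)+\sum_{m,r}m^kq^{mr}$ into its constant and cusp parts. Passing to $p_k=\mathbf p_k+\tfrac12\zeta(-k)$ and then specialising $k\to-1$ sends the cusp part to $\sum_{m,r}m^{-1}q^{mr}=-\langle p_{-1}\rangle$ and each $G_{k+i+1}$ with $i\geq2$ to the genuine Eisenstein series $G_i$, producing (after setting aside a divergent constant discussed below) the candidate identity
\[\langle p_{-1}f\rangle\=\sum_{j\geq0}\langle\varrho_{0,j}(f)\rangle\,D^j\langle p_{-1}\rangle-\sum_{i\geq2,\,j\geq0}\langle\varrho_{i,j}(f)\rangle\,D^jG_i,\]
whose only non-quasimodular ingredient is $\langle p_{-1}\rangle$, entering through the $j=0$ term as $\langle\varrho_{0,0}(f)\rangle\langle p_{-1}\rangle=\langle f\rangle\langle p_{-1}\rangle$.

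Next I would feed in the two facts about $\langle p_{-1}\rangle$. The $j=0$ term is exactly the product subtracted in $\langle f\rangle^*$, so it cancels and carries away all of the logarithmic defect. For $j\geq1$ I substitute $D^j\langle p_{-1}\rangle=-D^{j-1}G_2-\frac1{24}\delta_{j,1}$: the $\delta_{j,1}$ piece contributes a term proportional to $\langle\varrho_{0,1}(f)\rangle=\langle\partial_2 f\rangle$, which is exactly what the $\frac1{24}\langle\partial_2 f\rangle$ correction in the definition of $\langle f\rangle^*$ is tailored to absorb, while the $D^{j-1}G_2$ terms survive. Re-indexing $\sum_{j\geq1}\langle\varrho_{0,j}(f)\rangle D^{j-1}G_2$ by $j\mapsto j+1$ identifies its contribution to the weight-$2$ Eisenstein series with the operator $\varrho_{0,j+1}$; absorbing this into the $i=2$ coefficient of the second sum produces exactly $\varrho^*_{i,j}=\varrho_{i,j}+\delta_{i,2}\varrho_{0,j+1}$ and the asserted identity $\langle f\rangle^*=\sum_{i\geq2,\,j\geq0}\langle\varrho^*_{i,j}(f)\rangle D^jG_i$. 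That only quasimodular forms remain already proves the quasimodularity and weight claim.

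The hard part is justifying the specialisation $k\to-1$: continuing \cref{cor:pkf} naively also produces a divergent constant $-\zeta(1)\langle f\rangle$ — the trace of the fact that $\mathbf p_{-1}$ does not exist although $p_{-1}$ is finite — which must be argued away. I would avoid the continuation and instead compute the convergent bracket $\langle p_{-1}f\rangle=\sum_m\frac1m\langle r_mf\rangle$ head-on: writing each $\lambda$ containing $m$ as $\{m\}\sqcup\lambda'$ and using that adjoining a part $m$ acts on $\Lambda$ by the translation $\mathbf p_a\mapsto\mathbf p_a+m^a$, one re-derives the candidate identity with no constant anomaly. Making this ``adding a part'' expansion, with its constraint $m\notin\lambda'$, interact correctly with the connected-bracket formalism of \cref{thm:quasimodular}, and verifying that it reproduces the constant $\tfrac1{24}$ exactly rather than some other combination of $\zeta$-values, is the delicate point; the remainder is the bookkeeping of the differential operators $\varrho_{i,j}$ already set up in \cref{cor:pkf}.
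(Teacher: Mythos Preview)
Your proposal is correct and follows the same route as the paper: both obtain $\langle p_{-1}f\rangle$ as the $k=-1$ instance of the recursion underlying \cref{cor:pkf}, strip off $\langle f\rangle\langle p_{-1}\rangle$ from the $(i,j)=(0,0)$ term, extract $\tfrac1{24}\langle\partial_2f\rangle$ from the $i=0$ tail via $\sum_{m,r}rq^{mr}=G_2+\tfrac1{24}$, and re-index the surviving $i=0$ terms into the $i=2$ column to produce $\varrho^*_{i,j}$. The paper is terser: it simply asserts $\langle p_{-1}f\rangle=\sum_{i,j}\langle\varrho_{i,j}(f)\rangle\sum_{m,r}m^{i+j-1}r^jq^{mr}$ ``by the previous lemma'' and computes from there, without ever discussing the $\zeta(1)$ issue you raise or a part-insertion argument. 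Your extra caution is reasonable but also avoidable: if you first strip the constant term from both sides of \cref{cor:pkf} (the shift $-\tfrac12\zeta(-k)$ on the left cancels exactly the $j=0$ Bernoulli constant of $G_{k+1}$ on the right, since $\varrho_{i,0}=\delta_{i,0}\mathrm{Id}$), you are left with a $p_k$-identity containing no $\zeta$-values whatsoever, and this specialises to $k=-1$ with nothing divergent to argue away --- so the ``adding a part'' derivation, while valid, is not needed.
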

\begin{proof}
By the previous lemma, and as $\rho_{i,0}=\delta_{i,0}\cdot\mathrm{Id}$, we find that $\langle p_{-1}f \rangle$ equals
\begin{align*}
&\qquad\sum_{i,j} \langle\varrho_{i,j}(f)\rangle \, \Bigl(\sum_{m,r} m^{i+j-1} r^j q^{mr}\Bigr) \\
&\= \langle f\rangle \Bigl(\sum_{m,r}m^{-1} q^{mr}\Bigr) \+  \sum_{j\geq 1} \langle \varrho_{0,j}(f)\rangle D^{j-1}\biggl(\frac{1}{24}+G_2 \biggr)\+\!\!\sum_{i\geq 2,j\geq 1} \!\!\langle\varrho_{i,j}(f)\rangle \, D^j G_i\\
&\=\langle f\rangle \langle p_{-1}\rangle\+\frac{1}{24}\langle \partial_2(f)\rangle\+ \sum_{j\geq 1} \langle \varrho_{0,j}(f)\rangle D^{j-1}G_2\+\!\!\sum_{i\geq 2,j\geq 1} \langle\varrho_{i,j}(f)\rangle \, D^j G_i\,. 
\end{align*}
As  $\langle \varrho_{i,j}(f)\rangle$ is quasimodular of weight $k-i-2j$ the result follows.
\end{proof}

\subsection{Growth polynomials of quasimodular forms}\label{sec:growth}
Following \cite[Section~9]{CheMoeZag}, there exists a unique algebra homomorphism $\mathrm{ev}:\widetilde{M}\to \QQ[\pi^2][1/h]$ such that
\[ F(\tau) \= \mathrm{ev}[F](h) + O(e^{-h}) \qquad (q=e^{2\pi i\tau}=e^{-h})\]
as $h \to 0$ (i.e., $q\to 1$). 
We call $\mathrm{ev}[F](h)$ the \emph{growth polynomial} of~$F$---it is a polynomial in $1/h$; for more details see the aforementioned paper by Chen, M\"oller and Zagier. 
In particular, this morphism is characterized by the following three properties:
\begin{enumerate}[label=(\roman*)]\itemsep3pt
\item $\displaystyle\mathrm{ev}[F](h) \= a_0(f)\, \Bigl(\frac{2\pi \ii}{h}\Bigr)^{k}$ for $F\in M_{k}$\,,
\item $\displaystyle\mathrm{ev}[G_2](h) \= \frac{\zeta(2)}{h^2}-\frac{1}{2h}$\,,
\item $\displaystyle\mathrm{ev}[DF](h) \= -\pdv{}{h}\,\mathrm{ev}[F]$ for $F\in \widetilde{M}_{k}$\,.
\end{enumerate}

We will be interested in the leading coefficient of the growth polynomial. For $f_1,\ldots,f_r\in \Lambda$, we define the \emph{$\hbar$-bracket} and its \emph{leading term} by
\begin{align*}
\langle\, f_1\mid\cdots \mid f_r\,\rangle_{\hbar} &\defis  \mathrm{ev}[\langle\, f_1\mid\cdots\mid f_r \,\rangle](\hbar),\\
\langle\, f_1\mid\cdots \mid f_r\,\rangle_{L} &\defis \lim_{\hbar \to 0} \frac{\hbar^{k-r+1}}{(2\pi\ii)^{k-2r+2}}  \langle\, f_1\mid\cdots\mid f_r\,\rangle_{\hbar}\,, 
\end{align*}
where $k$ is the sum of the weights of the $f_i$. Note that by \cite[Proposition 11.1]{CheMoeZag} this limit is well-defined. By \cref{thm:*} we can extend the notation and allow an insertion of $p_{-1}$:
\begin{align}
\langle\, p_{-1} \mid f_1\mid\cdots \mid f_r\,\rangle_{\hbar} &\defis  \mathrm{ev}[\langle\, p_{-1} \mid f_1\mid\cdots\mid f_r \,\rangle](\hbar),\\
\langle\, p_{-1} \mid f_1\mid\cdots \mid f_r\,\rangle_{L} &\defis \lim_{\hbar \to 0} \frac{\hbar^{k-r+1}}{(2\pi\ii)^{k-2r+2}}  \langle\, p_{-1} \mid f_1\mid\cdots\mid f_r\,\rangle_{\hbar}\,. \label{eq:L}
\end{align}
The behaviour as $\hbar\to 0$ also determines the growth of the first $N$ Fourier coefficients. That is, the \emph{$N$-bracket}, which we define by
\[ [\,f_1 \mid \cdots \mid f_r\,]_N \defis \sum_{n=1}^N a_n(f_1,\ldots,f_r),\]
where we wrote $ \langle f_1 | \cdots | f_r\rangle \= \sum_{n\geq 0} a_n(f_1,\ldots,f_r) \,q^n$, admits the following growth \cite[Proposition~9.4]{CheMoeZag}.
\begin{proposition}\label{prop:growthN} 
For $f_1,\ldots,f_r\in \Lambda$ of weights $k_1,\ldots,k_r$ with $k=\sum k_i$ we have
\begin{align*}  
[ f_1 \mid \cdots \mid f_r]_N &\= \langle\, f_1\mid \cdots \mid f_r\,\rangle_L \,\frac{N^{k-r+1}(2\pi\ii)^{k-2r+2}}{(k-r+1)! } \+ O\bigl(N^{k-r}\log N\bigr) \\
[p_{-1} \mid f_1 \mid \cdots \mid f_r]_N &\= \langle\,p_{-1} \mid f_1\mid \cdots \mid f_r \,\rangle_L \,\frac{N^{k-r+1}(2\pi\ii)^{k-2r+2} }{(k-r+1)!} \+ O\bigl(N^{k-r}\log N\bigr)
\end{align*}
as $N\to \infty$. 
\end{proposition}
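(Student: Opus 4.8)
The plan is to note that the connected bracket $F:=\langle f_1\mid\cdots\mid f_r\rangle$ is itself a quasimodular form of weight $k$, so that $[f_1\mid\cdots\mid f_r]_N=\sum_{n=1}^N a_n(F)$ is simply the partial sum of its Fourier coefficients, while---by the definition of the $L$-bracket together with the degree statement \cite[Proposition~11.1]{CheMoeZag}---the quantity $\langle f_1\mid\cdots\mid f_r\rangle_L$ is the leading coefficient of the growth polynomial $\mathrm{ev}[F]$, which is a polynomial of degree $d:=k-r+1$ in $1/\hbar$. Writing $\mathrm{ev}[F](h)=\sum_{e\le d}c_e\,h^{-e}$, the definitions give $c_d=(2\pi\ii)^{k-2r+2}\langle f_1\mid\cdots\mid f_r\rangle_L$, so the proposition is exactly the instance $d=k-r+1$ of the following general principle: for a quasimodular form $F$ one has $\sum_{n\le N}a_n(F)=\frac{c_d}{d!}N^{d}+O(N^{d-1}\log N)$. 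First I would reduce this principle, using linearity of both the $N$-bracket and of $\mathrm{ev}$, to a spanning set: by the structure theorem for quasimodular forms I would write $F$ as a combination of the Eisenstein derivatives $D^jG_i$ ($i\ge 2$ even, $j\ge0$) and of cusp-form derivatives $D^j g$, and treat the two types separately.

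On the Eisenstein building blocks the two computations can be carried out in parallel and matched termwise. On the growth-polynomial side, properties (i)--(iii) of $\mathrm{ev}$ together with $-\tfrac{B_i}{2i}(2\pi\ii)^i=(i-1)!\,\zeta(i)$ give $\mathrm{ev}[D^jG_i](h)=(i+j-1)!\,\zeta(i)\,h^{-(i+j)}$ for $i\ge4$, and the same leading term with an additional subleading $h^{-(i+j-1)}$ contribution for $i=2$. On the coefficient side, $a_n(D^jG_i)=n^j\sigma_{i-1}(n)$, and a Dirichlet-hyperbola and Euler--Maclaurin estimate yields $\sum_{n\le N}n^j\sigma_{i-1}(n)=\frac{\zeta(i)}{i+j}N^{i+j}+O(N^{i+j-1}\log N)$, where the logarithm is produced precisely by the harmonic sum $\sum_{m\le N}m^{-1}$ that appears when $i=2$. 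Comparing the two, the universal correspondence $c\,h^{-e}\leftrightarrow \frac{c}{e!}N^{e}$ holds for each block, with error $O(N^{e-1}\log N)$ (indeed $\frac{(i+j-1)!\zeta(i)}{(i+j)!}=\frac{\zeta(i)}{i+j}$ as required). Summing over the blocks, the sub-leading main terms and all block errors are $O(N^{d-1}\log N)$, so the Eisenstein part of $[F]_N$ has leading term $\frac{c_d}{d!}N^{d}$ with the claimed error.

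The genuine subtlety is that $\mathrm{ev}[F]$ and the partial sums are two a priori different regularizations of the divergent series $\sum_n a_n(F)$, and since the $a_n$ need not be of one sign no general Tauberian theorem forces them to agree; they can differ only through the cusp-form derivatives $D^jg$, which are invisible to $\mathrm{ev}$ (by property (i) the growth polynomial of a modular form sees only its constant term, so $\mathrm{ev}[D^jg]=0$) yet still contribute to $[F]_N$. The heart of the argument is therefore to place these contributions inside the error. Using only Hecke's elementary bound $a_n(g)=O(n^{w/2})$ for a cusp form $g$ of weight $w$, partial summation gives $\sum_{n\le N}n^j a_n(g)=O(N^{k/2+1})$. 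Since every $\mathbf p_i$ has even weight $i+1$ the total weight $k$ is even, and since each $f_a$ may be taken of weight $\ge2$ (a constant slot annihilates the connected bracket for $r\ge2$) we have $k\ge 2r$. When $k\ge 2r+2$ this gives $\tfrac{k}{2}+1\le k-r$, so the cusp contribution is $O(N^{k-r})$ and is absorbed into the error; the borderline even weight $k=2r$ forces every $f_a$ to be a multiple of $\mathbf p_1$, whence Proposition~\ref{thm:quasimodular} identifies $F$ with a multiple of $D^{r-1}G_2$, a pure Eisenstein derivative with no cusp component, and the conclusion is exact from the $D^jG_2$ block. Reassembling the Eisenstein main term with the negligible cusp and lower-degree terms yields the stated asymptotics; the insertion of $p_{-1}$ requires no new idea, since Corollary~\ref{thm:*} rewrites the relevant bracket as a genuine quasimodular form to which the same analysis applies verbatim. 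I expect the main obstacle to be exactly this cusp-form control, i.e.\ ensuring that the two regularizations coincide to leading order.
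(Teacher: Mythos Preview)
The paper does not give its own proof of this proposition; it simply cites \cite[Proposition~9.4]{CheMoeZag}. Your reconstruction follows exactly the method of that reference: reduce to a single quasimodular form of weight~$k$, decompose as a combination of $D^jG_i$ and $D^jg$ with $g$ cuspidal, match the Eisenstein blocks by direct computation of $\sum_{n\le N}n^j\sigma_{i-1}(n)$ against $\mathrm{ev}[D^jG_i]$, and absorb the cusp part into the error via the Hecke bound $a_n(g)=O(n^{w/2})$. Your treatment of the borderline $k=2r$ case and your observation that $k$ is always even are correct.

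The one place that deserves a sentence more than ``requires no new idea'' is the $p_{-1}$ insertion, which is the genuinely new part relative to \cite{CheMoeZag}. Corollary~\ref{thm:*} shows that $\langle p_{-1}g\rangle-\langle p_{-1}\rangle\langle g\rangle$ is quasimodular, but the connected bracket $\langle p_{-1}\mid f_1\mid\cdots\mid f_r\rangle$ is defined through the M\"obius expansion \eqref{eq:connectedq}, and one must check that the non-quasimodular pieces $\langle p_{-1}\rangle\langle\,\cdot\,\rangle$ cancel there. They do: writing $\langle p_{-1}\prod_{a\in A_0\setminus\{0\}}f_a\rangle=\langle p_{-1}\rangle\langle\prod_{a\in A_0\setminus\{0\}}f_a\rangle+(\text{quasimodular})$ and grouping the $\langle p_{-1}\rangle$-terms by the induced partition~$\alpha'$ of $\{1,\dots,r\}$, each $\alpha'$ has preimages $\alpha'\cup\{\{0\}\}$ (with $\mu=(-1)^{\ell(\alpha')}\ell(\alpha')!$) and the $\ell(\alpha')$ partitions obtained by adjoining $0$ to a block of $\alpha'$ (each with $\mu=(-1)^{\ell(\alpha')-1}(\ell(\alpha')-1)!$), and these sum to zero. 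After this cancellation the connected bracket with $p_{-1}$ is a genuine quasimodular form of weight~$k$, and your argument applies verbatim.
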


In the rest of this subsection, we will now state two lemmas we need in the sequel. First of all, as a corollary of \cref{thm:*}, we determine the leading terms of $p_{-1}$ insertions. Secondly, we discuss the relationship between growth polynomials and differential operators. 

For all $\ell\geq 1$ odd, we define:
\begin{align}\label{def:h}
\mathbf{h}_\ell \defis \frac{-1}{2\ell} \left[u^{\ell+1}\right]  \mathbf{P}(u)^\ell, \quad \text{where } \mathbf{P}(u)\defis \exp\Bigl(-\sum_{\substack{k\geq 0 \\ k\, {\rm odd}}}  2\mathbf{p}_{k}\, u^{k+1}\Bigr).
\end{align}
These functions are the highest weight part of the central characters $\mathbf{f}_\ell$ introduced in~\eqref{eq:centralchar}, i.e., for all odd $\ell\geq 1$ the function $\mathbf{h}_\ell$ is homogeneous, and $\mathbf{f}_\ell- \mathbf{h}_\ell/\ell$ is of weight less than $\ell+1$ \cite[Theorem~6.7]{CheMoeSauZag}.
We will be interested in the growth of the coefficients in the following sequences
\begin{align*}
  \Psi^H(u_1,u_3,\ldots) &\defis \sum_{n \geq 0} \frac{1}{n!} \sum_{\ell_1,\ldots,\ell_n} \langle \mathbf{h}_{\ell_1}  \cdots  \mathbf{h}_{\ell_n}\rangle \,  u_{\ell_1}\cdots u_{\ell_n} \,, \\
 \Psi^{H,\circ}(u_1,u_3,\ldots) &\defis \sum_{n \geq  0} \frac{1}{n!} \sum_{\ell_1,\ldots,\ell_n} \langle \mathbf{h}_{\ell_1} | \cdots | \mathbf{h}_{\ell_n} \rangle \, u_{\ell_1} \cdots u_{\ell_n} \,, \\
 \Psi^{H,\circ}_{-1}(u_1,u_3,\ldots) &\defis \sum_{n \geq 0} \frac{1}{n!} \sum_{\ell_1,\ldots,\ell_n} \langle p_{-1} | \mathbf{h}_{\ell_1} | \cdots | \mathbf{h}_{\ell_n} \rangle \, u_{\ell_1} \cdots u_{\ell_n}  \,,  \\
  \mathcal{C}^{H,\circ}_{-1}(u_1,u_3,\ldots) &\defis \frac{-1}{24 \Psi^H}\sum_{n \geq 0} \frac{1}{n!} \sum_{\ell_1,\ldots,\ell_n}\sum_{j\geq 1} \langle\partial_2^j(\mathbf{h}_{\ell_1} \cdots \mathbf{h}_{\ell_n})\rangle\, u_{\ell_1} \cdots u_{\ell_n}\,.
 \end{align*}

By \cref{thm:*}, in the spin setting \cite[Lemma~10.4]{CheMoeSauZag} reads
\begin{corollary}\label{lem:L}
The leading terms of 
$\Psi^{H,\circ}_{-1}$ and $\mathcal{C}^{H,\circ}_{-1}$
agree.
\end{corollary}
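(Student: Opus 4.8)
The goal of Corollary~\ref{lem:L} is to show that the two generating series $\Psi^{H,\circ}_{-1}$ and $\mathcal{C}^{H,\circ}_{-1}$ have the same leading terms in the sense of the $L$-bracket (the leading coefficient of the growth polynomial extracted via \eqref{eq:L}). The plan is to reduce this to a coefficient-by-coefficient comparison and then invoke Corollary~\ref{thm:*} together with the leading-term analysis packaged in Proposition~\ref{prop:growthN}.

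\medskip

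\noindent\emph{Step 1: Reduce to a single connected bracket.} First I would extract the coefficient of a fixed monomial $u_\rho$ (for $\rho\in\OP$) on both sides. On the $\Psi^{H,\circ}_{-1}$ side this coefficient is the connected $\hbar$-bracket $\langle\, p_{-1}\mid \mathbf{h}_{\ell_1}\mid\cdots\mid \mathbf{h}_{\ell_n}\,\rangle$, whose leading term is by definition $\langle\, p_{-1}\mid \mathbf{h}_{\ell_1}\mid\cdots\mid \mathbf{h}_{\ell_n}\,\rangle_L$. The content of the claim is therefore that, after dividing by $\Psi^H$ and resumming, the $p_{-1}$-insertion contributes to leading order exactly the $-\tfrac1{24}\partial_2$-correction term appearing in $\mathcal{C}^{H,\circ}_{-1}$.

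\medskip

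\noindent\emph{Step 2: Apply the modified bracket identity.} The key input is Corollary~\ref{thm:*}, which writes
\[
\langle p_{-1}f\rangle \= \langle p_{-1}\rangle\langle f\rangle \+ \tfrac1{24}\langle\partial_2 f\rangle \+ \langle f\rangle^*,
\]
with $\langle f\rangle^*=\sum_{i\geq 2,\,j\geq 0}\langle\varrho_{i,j}^*(f)\rangle\,D^jG_i$ quasimodular of the same weight as $f$. The term $\langle p_{-1}\rangle\langle f\rangle$ is a disconnected contribution that is removed when passing to connected brackets, so that the connected $p_{-1}$-insertion decomposes into a $\tfrac1{24}\partial_2$-piece plus a genuinely quasimodular piece $\langle f\rangle^*$. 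I would now compare the growth rates. The $\partial_2$-piece lowers the weight by $2$ but also reduces the number of factors, so by Proposition~\ref{prop:growthN} it has the \emph{same} order of growth $N^{k-r+1}$ as the quasimodular piece $\langle f\rangle^*$; the point of the normalization in \eqref{eq:L} is precisely to isolate this common order. The subtlety is that one must check that $\langle f\rangle^*$, despite being quasimodular of full weight $k$, contributes at a \emph{strictly lower} order to the $L$-bracket than $\tfrac1{24}\langle\partial_2 f\rangle$, so that only the $\partial_2$-correction survives in the leading term.

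\medskip

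\noindent\emph{Step 3: Identify the surviving leading term.} Granting that $\langle f\rangle^*$ is subleading, the leading term of $\langle p_{-1}f\rangle$ (modulo the disconnected $\langle p_{-1}\rangle\langle f\rangle$) equals that of $\tfrac1{24}\langle\partial_2 f\rangle$. Resumming the generating series over all $n$ and all $\ell_1,\ldots,\ell_n$, the connected-bracket combinatorics (the exponential relation $\exp\Psi^{H,\circ}=\Psi^H$) converts the $\tfrac1{24}\partial_2$-insertion applied to $\mathbf{h}_{\ell_1}\cdots\mathbf{h}_{\ell_n}$ into exactly the definition of $\mathcal{C}^{H,\circ}_{-1}$, once one accounts for the prefactor $-1/(24\,\Psi^H)$ and the fact that $\partial_2^j$ appears through its generating function. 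Matching signs and the factor $1/24$ then yields that the leading terms of $\Psi^{H,\circ}_{-1}$ and $\mathcal{C}^{H,\circ}_{-1}$ coincide.

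\medskip

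\noindent The main obstacle I anticipate is Step~2: establishing rigorously that the quasimodular remainder $\langle f\rangle^*$ is of strictly lower order in the $L$-bracket normalization than the $\partial_2$-correction. This is a weight-versus-depth bookkeeping argument—one must track how the operators $\varrho_{i,j}^*$ change the weight and the number of connected factors, and confirm via Proposition~\ref{prop:growthN} that the growth exponent $k-r+1$ is achieved only by the $\delta_{i,2}\varrho_{0,j+1}$ part of $\varrho_{i,j}^*$ and by the explicit $\partial_2$-term, with all other contributions suppressed by at least one power of $N$ (up to logarithmic factors). This is precisely the spin-setting analogue of the non-spin computation in \cite[Lemma~10.4]{CheMoeSauZag}, and the combinatorial structure carries over once Corollary~\ref{thm:*} is substituted for its non-spin counterpart.
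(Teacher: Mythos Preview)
Your approach is the same as the paper's: the paper gives no detailed argument but simply states that, by Corollary~\ref{thm:*}, the result is the spin analogue of \cite[Lemma~10.4]{CheMoeSauZag}, which is precisely what you identify in your final paragraph. One minor inconsistency in your write-up is worth flagging: in Step~2 you assert that $\langle f\rangle^*$ is \emph{entirely} subleading to the $\tfrac{1}{24}\langle\partial_2 f\rangle$ term, whereas in your final paragraph you (more accurately) allow the $\delta_{i,2}\varrho_{0,j+1}$ piece of $\varrho_{i,j}^*$ to contribute at leading order as well---the latter is the correct picture, since those pieces are exactly what supply the higher powers $\partial_2^j$ with $j\geq 2$ appearing in the definition of $\mathcal{C}^{H,\circ}_{-1}$, while only the $\varrho_{i,j}$ contributions with $i\geq 2$ are genuinely subleading.
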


Let the differential operator $\mathcal{D}:\Lambda\to \Lambda$ be given by
\[2\mathcal{D} = -\pdv{}{\mathbf{p}_1}+\sum_{\ell_1,\ell_2\geq 1} (\ell_1+\ell_2)\mathbf{p}_{\ell_1+\ell_2-1}\pdv{^2}{\mathbf{p}_{\ell_1}\,\partial\mathbf{p}_{\ell_2}}.\]
In \cite[Proof of Proposition~6.10]{CheMoeSauZag} it was shown that (we correct their formula by a factor~$\tfrac{1}{2}$)
\[\mathfrak{d} \langle f \rangle = \langle \mathcal{D}(f) \rangle \]
for all $f\in \Lambda$, where $\mathfrak{d}$ is the unique derivation on quasimodular forms given by $\mathfrak{d}(G_2) = -\frac{1}{2}$ and $\mathfrak{d}(f)=0$ if~$f$ is modular. 
This operator~$\mathcal{D}$ is extremely useful in determining the growth of the coefficients of $F=\langle f \rangle_q$ for $f\in \Lambda$. 
Namely, by 
{\cite[Proposition~6.10]{CheMoeSauZag}},
for all $f\in \Lambda_k$ we have
\begin{align} 
\langle f\rangle_{\hbar} \= \frac{(2\pi \ii)^k}{\hbar^k} (e^{(2\pi \ii)^{-2}\hbar \mathcal{D}} f)(\emptyset). \label{eq:hbracexpr}
 \end{align}
(Be careful that the definition of the $\hbar$-bracket in \cite[Eq.~34]{CheMoeSauZag} differs by a power of $2\pi\ii$ of the $\hbar$-bracket in \cite{CheMoeZag} and in this work.) Observe that the evaluation at the partition~$\emptyset$ of~$0$ is explicitly given by $\mathbf{p}_k(\emptyset) = -\frac{1}{2}\zeta(-k)$. 

Later, we will make use of the following commutation relation, which is the spin analogue of the commutation relation in \cite[Lemma 10.5]{CheMoeSauZag}. 
\begin{lemma}\label{lem:comm} The commutation relation
\[\partial_2 \circ e^\mathcal{D}(f) \= e^\mathcal{D} \sum_{j\geq 1} \partial_2^j(f) \qquad \Bigl(\partial_2 \= \pdv{}{\mathbf{p}_1}\Bigr)\]
holds for all $f\in \Lambda$. 
\end{lemma}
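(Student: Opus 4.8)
The plan is to prove the commutation relation
\[
\partial_2 \circ e^{\mathcal D}(f) \= e^{\mathcal D}\sum_{j\geq 1}\partial_2^j(f),
\qquad \partial_2 \= \pdv{}{\mathbf p_1},
\]
by working at the level of formal power series in a grading parameter and reducing everything to a single commutator identity $[\partial_2,\mathcal D]$. Since both sides are linear in $f$, it suffices to check the identity for $f$ a monomial in the $\mathbf p_i$, and since $e^{\mathcal D}=\sum_{N\geq 0}\mathcal D^N/N!$ the claim is equivalent to a family of operator identities obtained by comparing like powers. The natural first step is to compute the commutator $[\partial_2,\mathcal D]$ directly from the explicit formula
\[
2\mathcal D \= -\pdv{}{\mathbf p_1}+\sum_{\ell_1,\ell_2\geq 1}(\ell_1+\ell_2)\,\mathbf p_{\ell_1+\ell_2-1}\,\pdv{^2}{\mathbf p_{\ell_1}\,\partial\mathbf p_{\ell_2}}.
\]
Because $\partial_2=\partial/\partial\mathbf p_1$ commutes with every $\partial/\partial\mathbf p_j$, the only contribution to $[\partial_2,\mathcal D]$ comes from differentiating the multiplication operators $\mathbf p_{\ell_1+\ell_2-1}$, which contribute precisely when $\ell_1+\ell_2-1=1$, i.e.\ $\ell_1=\ell_2=1$. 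I expect this to yield a clean expression of the form $[\partial_2,\mathcal D]=c\,\partial_2^2$ for an explicit constant $c$ (tracking the factor $(\ell_1+\ell_2)=2$ and the symmetry factor from the two ways $\partial_2$ can hit $\mathbf p_1$, together with the overall $\tfrac12$), so that $\partial_2$ and $\mathcal D$ generate a manageable two-step Lie algebra.

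Once the commutator is pinned down, the strategy is to push $\partial_2$ through $e^{\mathcal D}$ using the standard Hadamard-type expansion
\[
\partial_2\circ e^{\mathcal D} \= e^{\mathcal D}\circ e^{-\mathcal D}\partial_2 e^{\mathcal D}
\= e^{\mathcal D}\circ\mathrm{Ad}_{e^{-\mathcal D}}(\partial_2),
\]
where $\mathrm{Ad}_{e^{-\mathcal D}}(\partial_2)=\sum_{N\geq 0}\frac{(-1)^N}{N!}(\mathrm{ad}_{\mathcal D})^N(\partial_2)$ and $\mathrm{ad}_{\mathcal D}(X)=[\mathcal D,X]$. The point of computing $[\partial_2,\mathcal D]=c\,\partial_2^2$ is that the iterated brackets $(\mathrm{ad}_{\mathcal D})^N(\partial_2)$ become polynomials in $\partial_2$ alone: each further bracket with $\mathcal D$ raises the power of $\partial_2$ by one while producing combinatorial constants, because $[\partial_2^m,\mathcal D]$ is again expressible through $[\partial_2,\mathcal D]$ via the Leibniz rule for commutators. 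I would set up the resulting recursion for the coefficients, solve it in closed form, and check that $\sum_{N}\frac{(-1)^N}{N!}(\mathrm{ad}_{\mathcal D})^N(\partial_2)$ collapses exactly to $\sum_{j\geq 1}\partial_2^j$; the alternating signs from $e^{-\mathcal D}$ combined with the factorial denominators should produce the geometric-type series $\sum_{j\geq 1}\partial_2^j$ with all coefficients equal to $1$, which is the desired right-hand side after re-applying $e^{\mathcal D}$.

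The main obstacle I anticipate is the bookkeeping in the recursion for $(\mathrm{ad}_{\mathcal D})^N(\partial_2)$: one must verify that the combinatorial constants conspire so that the coefficient of $\partial_2^{j}$ in the full sum is exactly $1$ for every $j\geq 1$ and $0$ for $j=0$, rather than some more complicated weight. This is a purely algebraic convergence-of-coefficients check (everything is a finite computation in each fixed weight, since $\partial_2$ strictly lowers weight and annihilates constants, so the sums over $N$ and $j$ terminate on any given $f$), but getting the normalization constant $c$ and the sign conventions right is where an error would hide. An alternative, perhaps cleaner, route that sidesteps some of this is to verify the identity weight-by-weight: both operators lower weight, so applied to $f\in\Lambda_k$ only finitely many terms survive, and one can induct on $k$, using the base case (constants, where both sides vanish) and the already-established commutator to pass from $\partial_2 e^{\mathcal D}f$ to $e^{\mathcal D}\partial_2 f$ plus correction terms that regroup into $e^{\mathcal D}\sum_{j\geq 2}\partial_2^j f$. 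I would present the Hadamard-expansion argument as the main line and keep the inductive reformulation in reserve in case the closed-form coefficient identity proves stubborn.
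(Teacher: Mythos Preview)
Your strategy is correct and rests on exactly the same key observation as the paper's proof, namely the commutator identity $[\partial_2,\mathcal D]=\partial_2^2$ (your constant is $c=1$: the sum contributes the single term $2\mathbf p_1\partial_{\mathbf p_1}^2$ to $2\mathcal D$, and $[\partial_{\mathbf p_1},\mathbf p_1\partial_{\mathbf p_1}^2]=\partial_{\mathbf p_1}^2$). From there, the iterated brackets satisfy $(\mathrm{ad}_{-\mathcal D})^N(\partial_2)=N!\,\partial_2^{N+1}$, so the Hadamard expansion collapses immediately to $e^{-\mathcal D}\partial_2 e^{\mathcal D}=\sum_{j\ge 1}\partial_2^j$, and your worry about delicate coefficient bookkeeping turns out to be unfounded.

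The paper proceeds slightly differently: after establishing $[\partial_2^i,\mathcal D]=i\,\partial_2^{i+1}$, it proves by a second induction the explicit formula $\partial_2\mathcal D^i=\sum_{k=0}^i\frac{i!}{k!}\mathcal D^k\partial_2^{i-k+1}$ and then sums over $i$ directly. Your Hadamard route is a cleaner packaging of the same computation---by conjugating rather than expanding $e^{\mathcal D}$ term by term, you never have to keep track of mixed monomials in $\mathcal D$ and $\partial_2$, only pure powers of $\partial_2$---whereas the paper's double induction is more elementary in that it avoids invoking the Baker--Campbell--Hausdorff machinery. Either way the substance is the single commutator computation; the rest is bookkeeping.
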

\begin{proof}
First, observe that $[\partial_2,\mathcal{D}]=\partial_2^2$. Hence, by induction we find 
\[ [\partial_2^i,\mathcal{D}]\=i\,\partial_2^{i+1} \qquad (i\geq 0).\]
Next, again by induction, we show that
\[\partial_2\mathcal{D}^i \= \sum_{k=0}^i \frac{i!}{k!} \mathcal{D}^k\partial_2^{i-k+1}\]
for all $i\geq 0$. For $i=0$ this is trivial. By assuming the result for $i=j$, we obtain
\begin{align*}
\partial_2 \mathcal{D}^{j+1} &\= \sum_{k=0}^j \frac{j!}{k!} \mathcal{D}^k\partial_2^{j-k+1}\mathcal{D} \\
&\= \sum_{k=0}^j \frac{j!}{k!} \bigl(\mathcal{D}^{k+1}\partial_2^{j-k+1}\+(j-k+1)\,\mathcal{D}^{k}\partial_2^{j-k+2}\bigr) \\
&= \sum_{k=1}^{j+1} \frac{j!k}{k!} \mathcal{D}^{k}\partial_2^{j-k+2}\+ \sum_{k=0}^{j} \frac{j!(j-k+1)}{k!} \mathcal{D}^{k}\partial_2^{j-k+2}\\
&= \sum_{k=0}^{j+1} \frac{(j+1)!}{k!} \mathcal{D}^{k}\partial_2^{(j+1)-k+1},
\end{align*}
proving the claim. 
We conclude that
\[
\partial_2 \circ e^\mathcal{D} \= \sum_{i\geq 0} \frac{\partial_2 \mathcal{D}^i}{i!} 
\= \sum_{i\geq 0}\sum_{k=0}^i \frac{1}{k!}\mathcal{D}^k\partial_2^{i-k+1} 
\= e^\mathcal{D} \sum_{j\geq 1}\partial_2^j 
\,. \qedhere\]
\end{proof}


\subsection{Representations of the Sergeev group}\label{sec:Sergeev}
The Siegel--Veech constants are computed as the limit of (weighted) Hurwitz numbers of the torus of large degree. These Hurwitz numbers can be expressed in terms of central characters of the symmetric group. Analogously, spin Siegel--Veech constants and spin Hurwitz numbers can be expressed in terms of central characters of the spin-symmetric group \cite{EskOkoPan, Gun, Lee}. More precisely, spin Siegel--Veech constants can be expressed in terms of central characters corresponding to representations of the Sergeev group, which is closely related to the spin symmetric group. Following \cite{Mor,HofHum,Iva,GiaKraLew}, we recall some results about the representation theory of both groups and explain how they are related. Though most of the results are not new, the character table of the Sergeev group seems not to be available in the literature. We explain how to derive this table from the known results in the literature. 

The spin symmetric group $\widetilde{\mathfrak{S}}_d$ is one of the two representation groups of the symmetric group (for $d\geq 4$), meaning that every projective representation of the symmetric group~$\mathfrak{S}_d$ lifts to a (linear) representation of~$\widetilde{\mathfrak{S}}_d$. Explicitly, it is defined by the central extension
\[ 0 \to \ZZ/2\ZZ \to \Spin_d \xrightarrow{\pi} \Sym_d \to 1\]
and can be presented by
\[\Spin_d = \langle t_1,\ldots,t_{d-1}, \varepsilon \mid \varepsilon^2=1, t_j^2=\varepsilon, (t_jt_{j+1})^3=\varepsilon, (t_jt_k)^2=\varepsilon \text{ for } |j-k|\geq 2\rangle.\]
The projection $\pi$ to $\Sym_d$ is given by sending $\varepsilon$ to the neutral element and $t_j$ to the transposition $(j, j+1)$. 

Note that the element~$\varepsilon$ is central. Hence, $\varepsilon$ acts by $\pm 1$ in every representation of $\Spin_d$. We call representations for which $\varepsilon$ acts by $-1$ \emph{spin representations}. These correspond to the projective representation of $\Sym_d\mspace{1mu}$, whereas representation for which $\varepsilon$ acts by $+1$ correspond to ordinary (linear) representations of $\Sym_d\mspace{1mu}$. 
\begin{proposition}[Schur]\label{prop:charSpin}
The irreducible spin representations $V^\lambda_\pm$ of $\Spin_d$ are para\-metrized by pairs $(\lambda,(-1)^{d-\ell(\lambda)})$ for strict partitions~$\lambda$. Moreover, the character values $\varphi^\lambda_{\pm}(x)$ are determined recursively by
\begin{enumerate}
\item[\upshape (i)] an analogue of the \emph{Murnaghan--Nakayama rule} when $\pi(x)$ has only odd cycles in its cycle type;
\item[\upshape (ii)] \[\varphi^{\lambda}_{\pm}(x) \= \pm \frac{\mathrm{i}}{\sqrt{2}}\sqrt{\prod \lambda_i}\] when $d-\ell(\lambda)$ is odd and $\pi(x)$ has cycle type~$\lambda$;
\item[\upshape (ii)] $\varphi^\lambda_{\pm}(x)=0$ in all other cases.
\end{enumerate}
\end{proposition}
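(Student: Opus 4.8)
The plan is to derive both the parametrization and the character formula from the spin characteristic map, viewing $\Spin_d$ through its spin group algebra; the statement is Schur's, so the work is really in reconstructing the character table. Since $\varepsilon$ is central and acts by $-1$ on every spin representation, these are exactly the modules over the quotient $\mathcal{T}_d:=\CC[\Spin_d]/(\varepsilon+1)$, a semisimple algebra of dimension $d!$. I would first equip $\mathcal{T}_d$ with the $\ZZ/2\ZZ$-grading coming from the sign of $\pi(x)$, making it a semisimple \emph{superalgebra}. By the structure theory of such algebras each simple factor is either of type $M$ (an ordinary matrix superalgebra) or of type $Q$ (isomorphic to $M_n(\CC)\otimes\Cliff_1$). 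Forgetting the grading, a type $M$ block contributes a single irreducible ordinary representation (self-associate), whereas a type $Q$ block splits into an associate pair $V^\lambda_\pm$; this is the source of the two representations. The classical dichotomy, which I would import, is that the block attached to a strict partition $\lambda\vdash d$ is of type $Q$ precisely when $d-\ell(\lambda)$ is odd, i.e.\ exactly when the sign $(-1)^{d-\ell(\lambda)}$ recorded in the statement equals $-1$.

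To index the blocks by strict partitions I would use the spin characteristic map, sending a spin class function on $\Spin_d$ into the ring $\Gamma=\CC[p_1,p_3,p_5,\dots]$ of symmetric functions in the odd power sums. Under this map the irreducible spin characters correspond, up to an explicit power of $2$, to the Schur $Q$-functions $Q_\lambda$; since $\{Q_\lambda:\lambda\in\SP,\ |\lambda|=d\}$ is a basis of the degree-$d$ part of $\Gamma$, the irreducibles are in bijection with strict partitions of $d$. Equivalently one may run this argument through the Sergeev superalgebra $\Sergeev_d$, whose supermodule category is Morita superequivalent to that of $\mathcal{T}_d$, and which is the framework relevant for the later computations in the paper.

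For the character values I would proceed on three fronts. \emph{Vanishing:} if $x$ is conjugate to $\varepsilon x$ in $\Spin_d$, then $\varphi^\lambda_\pm(x)=\varphi^\lambda_\pm(\varepsilon x)=-\varphi^\lambda_\pm(x)=0$, so the character is supported on the \emph{split} classes; these are exactly the classes whose cycle type is either all odd, or has distinct parts with $d-\ell$ odd, and a triangularity/orthogonality argument shows that on the latter family only the class of cycle type equal to the indexing $\lambda$ survives, giving (iii). \emph{Murnaghan--Nakayama:} on the all-odd classes the recursion (i) is the image under the characteristic map of the Pieri-type rule for multiplying $Q_\lambda$ by an odd power sum $p_{2r+1}$. \emph{Special value:} on the class of cycle type $\lambda$, which is an odd permutation exactly when $d-\ell(\lambda)$ is odd, the two associate representations $V^\lambda_\pm$ are distinguished, and the value $\pm\frac{\ii}{\sqrt2}\sqrt{\prod_i\lambda_i}$ is read off from the leading power-sum coefficient of $Q_\lambda$ (equivalently from the eigenvalues of the Clifford generators on the irreducible Clifford module underlying the type $Q$ block), the factor $\sqrt{\prod_i\lambda_i}$ being the product of the cycle lengths and $\frac{\ii}{\sqrt2}$ being fixed by normalisation.

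The main obstacle is this \emph{special value} together with the consistent matching of the two signs to the labels $V^\lambda_\pm$. The vanishing statement and the Murnaghan--Nakayama recursion follow fairly mechanically from the $\varepsilon$-argument and the Pieri rule for $Q$-functions, but pinning the constant $\pm\frac{\ii}{\sqrt2}\sqrt{\prod_i\lambda_i}$ on the split class of cycle type $\lambda$---so that the type dichotomy, the splitting of that conjugacy class, and the explicit value all agree---requires tracking the Clifford normalisations carefully, and is the point at which one must reconcile the competing sign and $\sqrt{2}$ conventions found in the literature.
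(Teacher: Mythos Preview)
The paper does not prove this proposition at all: it is stated as a classical result of Schur and followed only by the remark that the Murnaghan--Nakayama rule is \cite[Theorem~10.1]{HofHum}, with a pointer to Morris's survey \cite{Mor} for a uniform treatment. There is therefore no ``paper's own proof'' to compare against; the proposition is quoted from the literature.

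Your sketch is a reasonable modern outline of how the result is established---via the superalgebra structure of $\mathcal{T}_d$, the type $M$/type $Q$ dichotomy, and the spin characteristic map into the ring generated by odd power sums with Schur $Q$-functions as the images of the irreducible characters. This is essentially the route taken in the standard references the paper cites (Hoffman--Humphreys, and implicitly Ivanov and Kleshchev elsewhere in the section). You correctly identify the delicate point: the vanishing on non-split classes and the recursion on odd-cycle classes are routine once the characteristic map is in place, but pinning down the value $\pm\frac{\ii}{\sqrt{2}}\sqrt{\prod\lambda_i}$ on the class of cycle type $\lambda$ and matching the $\pm$ to the labelling of $V^\lambda_\pm$ requires care with Clifford normalisations. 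One small correction: your claim that ``a triangularity/orthogonality argument shows that on the latter family only the class of cycle type equal to the indexing $\lambda$ survives'' is slightly imprecise---the vanishing on split classes of even-cycle-type distinct from $\lambda$ is really a direct computation (or the observation that $V^\lambda_+$ and $V^\lambda_-$ restrict identically to $\ASpin_d$, forcing their characters to agree on even classes and hence to vanish on any even split class other than the one distinguishing them), not a pure orthogonality consequence.
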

The \emph{Murnaghan--Nakayama rule} for $\varphi^\lambda_{\pm}$ is \cite[Theorem~10.1]{HofHum}. There exists an amusing way to describe the Murnaghan--Nakayama rule uniformly for both $\Sym_d$ and $\Spin_d$, for which we refer to the survey by Morris \cite{Mor}. Note that in case $d-\ell(\lambda)$ is odd, both cases~{\upshape(i)} and~{\upshape(ii)} contribute one-half to the character inner product:
\begin{corollary}\label{cor:halfspin} For all $\lambda\in\SP(d)$ with $d-\ell(\lambda)$ odd
\[ \frac{1}{|\Spin_d|}\sum\nolimits_{\text{\upshape(i)}} |\varphi^{\lambda}_{\pm}(x)|^2 \= \frac{1}{2} \= \frac{1}{|\Spin_d|}\sum\nolimits_{\text{\upshape(ii)}} |\varphi^{\lambda}_{\pm}(x)|^2,\]
where the first sum is over all $x$ for which $\pi(x)$ has only odd cycles in its cycle type and the second over all $x$ for which $\pi(x)$ has cycle type~$\lambda$. 
\end{corollary}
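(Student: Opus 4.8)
The plan is to derive the statement from Schur's orthogonality relation for the irreducible character $\varphi^\lambda_\pm$ together with the explicit description of its support in \cref{prop:charSpin}. Since $V^\lambda_\pm$ is irreducible, one has $\frac{1}{|\Spin_d|}\sum_{x\in\Spin_d}|\varphi^\lambda_\pm(x)|^2=1$. By \cref{prop:charSpin} the summand is nonzero only when $x$ is of type~(i) (so that $\pi(x)$ has only odd cycles) or of type~(ii) (so that $\pi(x)$ has cycle type~$\lambda$); hence this unit total is precisely the sum of the two quantities appearing in the corollary. It therefore suffices to show that the two supports are disjoint and that the type~(ii) piece equals $\tfrac12$.

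First I would establish disjointness using a parity argument. An overlap would force $\lambda$ to be a strict partition all of whose parts are odd; but then $|\lambda|\equiv\ell(\lambda)\pmod 2$, so $d-\ell(\lambda)$ would be even, contradicting the hypothesis. Hence under the assumption that $d-\ell(\lambda)$ is odd the two contributions genuinely add up to~$1$, and it remains only to pin down one of them.

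Next I would compute the type~(ii) contribution directly. The double cover $\pi$ has fibers of size~$2$, and $\Sym_d$ has $d!/z_\lambda=d!/\prod_i\lambda_i$ elements of cycle type~$\lambda$ (using $z_\lambda=\prod_i\lambda_i$ for strict~$\lambda$), giving $2\,d!/\prod_i\lambda_i$ elements $x$ of type~(ii). On each such $x$ one has $|\varphi^\lambda_\pm(x)|^2=\tfrac12\prod_i\lambda_i$ by the value formula in \cref{prop:charSpin}, and with $|\Spin_d|=2\,d!$ the factors of $\prod_i\lambda_i$ and $d!$ cancel to leave $\tfrac12$. The type~(i) piece is then $1-\tfrac12=\tfrac12$ by the orthogonality identity above, which is exactly the claim.

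The only genuinely delicate step is the disjointness, which hinges on the parity observation that a strict partition of~$d$ into odd parts has length congruent to~$d$ modulo~$2$; the rest is a routine conjugacy-class count combined with Schur's value formula. I would also verify at the outset that Schur orthogonality applies verbatim to the possibly complex-valued characters $\varphi^\lambda_\pm$—that is, that each $V^\lambda_\pm$ is truly irreducible—so that the normalization $\frac{1}{|\Spin_d|}\sum_x|\varphi^\lambda_\pm(x)|^2=1$ is legitimate; this is implicit in \cref{prop:charSpin}.
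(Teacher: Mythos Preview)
Your proposal is correct and follows essentially the same route as the paper: use that the full character inner product equals~$1$, then compute the type~(ii) contribution directly from the conjugacy-class count $2\,d!/\prod_i\lambda_i$ and the explicit value $|\varphi^\lambda_\pm(x)|^2=\tfrac12\prod_i\lambda_i$. Your disjointness argument (that $\lambda\in\OP$ would force $d\equiv\ell(\lambda)\pmod 2$) makes explicit a point the paper's two-line proof leaves implicit, but the underlying strategy is identical.
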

\begin{proof}
This follows from the fact that the character inner product, which is the sum of the left and right side, is one, and that the conjugacy class of elements for which $\pi(x)$ is of type $\lambda$ is of size~$2d!/\prod \lambda_i\mspace{1mu}$.
\end{proof}
The sign of a permutation in the symmetric group determines a $\ZZ/2\ZZ$-grading on $\Sym_d$, which lifts to a $\ZZ/2\ZZ$-grading on $\Spin_d\mspace{1mu}$. Explicitly, $\deg(\varepsilon)=0$ and $\deg(t_i)=1$. The elements of degree $0$ in $\Spin_d$ form the group $\ASpin_d$, which is a central extension of the alternating group. Given a partition $\lambda$, we write $\epsilon(\lambda)$ for the parity of $d-\ell(\lambda)$. 

\begin{proposition}[Schur]\label{prop:charASpin}
The irreducible spin representations of $\ASpin_d$ are para\-metrized by pairs $(\lambda,(\pm 1)^{d-\ell(\lambda)+1})$ for strict partitions~$\lambda$. More precisely, if $d-\ell(\lambda)$ is odd, then $V^\lambda_+$ and $V^\lambda_-$ are isomorphic irreducible representations of $\ASpin_d$. If $d-\ell(\lambda)$ is even, the representation $V^\lambda_+$ splits as a sum of two irreducible representations of~$\ASpin_d$. The corresponding characters~$\alpha^{\lambda}_{\pm}$ satisfy
\begin{enumerate}
\item[\upshape (i)] $\alpha^\lambda_{\pm}(x) = 2^{\epsilon(\rho)-1}\varphi^\lambda_+(x)$ when $\pi(x)$ is of cycle type $\rho\in\OP$ and $\rho\neq \lambda$;
\item[\upshape (ii)]\[\alpha^{\lambda}_{\pm}(x) \= \frac{1}{2}\varphi^\lambda_+(x)\pm \frac{\mathrm{i}}{2}\sqrt{\prod \lambda_i}\] when $d-\ell(\lambda)$ is even and $\pi(x)$ has cycle type~$\lambda$;
\item[\upshape (ii)] $\alpha_\lambda^{\pm}(x) = \varphi^\lambda_{+}(x)=0$ in all other cases.
\end{enumerate}
\end{proposition}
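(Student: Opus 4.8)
The statement is an instance of Clifford theory for the index-two normal subgroup $\ASpin_d\triangleleft\Spin_d$, whose quotient $\Spin_d/\ASpin_d\cong\ZZ/2\ZZ$ is detected by the character $\sgn\circ\pi$. The plan is to first decide, for each strict~$\lambda$, whether the spin representation $V^\lambda_+$ is \emph{self-associate} (i.e.\ $V^\lambda_+\otimes\sgn\cong V^\lambda_+$) or not, and then to read off the restricted characters. Recall the standard dichotomy: $\mathrm{Res}^{\Spin_d}_{\ASpin_d}V^\lambda_+$ is irreducible when $V^\lambda_+\not\cong V^\lambda_+\otimes\sgn$, while it splits as a sum of two non-isomorphic $\ASpin_d$-representations, conjugate under any element of $\Spin_d\setminus\ASpin_d$, precisely when $V^\lambda_+\cong V^\lambda_+\otimes\sgn$. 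This reduces everything to identifying the twisted character $\varphi^\lambda_+\cdot\sgn$ and to one genuine computation on the classes that split.

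The determination of the twist uses the elementary fact that a permutation all of whose cycles have odd length is even: for $\rho\in\OP$ one has $d\equiv\ell(\rho)\pmod 2$, so $\sgn\equiv 1$ on every element whose image has odd cycle type. Combined with \cref{prop:charSpin}, this gives $(\varphi^\lambda_+\cdot\sgn)(x)=\varphi^\lambda_+(x)$ whenever $\pi(x)$ has only odd cycles, while on the exceptional class of cycle type~$\lambda$ one has $\sgn(x)=(-1)^{d-\ell(\lambda)}$. Hence if $d-\ell(\lambda)$ is even then $\varphi^\lambda_+\cdot\sgn=\varphi^\lambda_+$, so $V^\lambda_+$ is self-associate and its restriction splits; and if $d-\ell(\lambda)$ is odd then the exceptional value $\pm\tfrac{\ii}{\sqrt2}\sqrt{\prod\lambda_i}$ changes sign, giving $\varphi^\lambda_+\cdot\sgn=\varphi^\lambda_-$, so $V^\lambda_+$ and $V^\lambda_-$ form an associate pair whose restrictions coincide and are irreducible. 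This yields the asserted parametrization by pairs $(\lambda,(\pm1)^{d-\ell(\lambda)+1})$, together with the isomorphism $V^\lambda_+\cong V^\lambda_-$ over $\ASpin_d$ in the odd case and the splitting in the even case.

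It remains to compute the characters $\alpha^\lambda_\pm$ in the split (even) case, where $\alpha^\lambda_++\alpha^\lambda_-=\varphi^\lambda_+|_{\ASpin_d}$. On every $\Spin_d$-class that remains a single $\ASpin_d$-class the two constituents agree and equal $\tfrac12\varphi^\lambda_+$; since $\epsilon(\rho)=0$ for $\rho\in\OP$ this is exactly the value $2^{\epsilon(\rho)-1}\varphi^\lambda_+$ of~(i), and it also produces the vanishing~(iii) off the support of $\varphi^\lambda_+$. The only class on which $\alpha^\lambda_+$ and $\alpha^\lambda_-$ differ is the one of cycle type~$\lambda$, which splits into two $\ASpin_d$-classes; there $\alpha^\lambda_\pm=\tfrac12\varphi^\lambda_++\pm\tfrac12\beta$, where the difference character $\beta=\alpha^\lambda_+-\alpha^\lambda_-$ is supported on this split class and takes opposite values of equal modulus on its two halves.

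The main obstacle is pinning down $\beta$ on the split class, i.e.\ proving that its value is $\pm\ii\sqrt{\prod\lambda_i}$. I would fix its modulus by orthonormality: as $\alpha^\lambda_+,\alpha^\lambda_-$ are distinct irreducible characters, $\langle\beta,\beta\rangle_{\ASpin_d}=\langle\alpha^\lambda_+,\alpha^\lambda_+\rangle+\langle\alpha^\lambda_-,\alpha^\lambda_-\rangle-2\langle\alpha^\lambda_+,\alpha^\lambda_-\rangle=2$. Using $|\ASpin_d|=d!$ together with the class-size count from the proof of \cref{cor:halfspin} (the elements of $\Spin_d$ with $\pi(x)$ of type~$\lambda$ form a class of size $2d!/\prod\lambda_i$, which halves upon splitting in $\ASpin_d$), the identity $\langle\beta,\beta\rangle=2$ forces $|\beta|^2=\prod\lambda_i$ on each half, so that $\alpha^\lambda_\pm-\tfrac12\varphi^\lambda_+=\pm\tfrac12\beta$ has modulus $\tfrac12\sqrt{\prod\lambda_i}$, matching~(ii). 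The remaining—and genuinely delicate—point is that $\beta$ is \emph{purely imaginary} rather than real; I would deduce this by comparison with \cref{prop:charSpin}(ii), where the same square-root datum $\sqrt{\prod\lambda_i}$ governs the exceptional values of the associate pair of $\Spin_d$ in the opposite parity, so that a compatible choice of the two halves makes $\beta$ take the values $\pm\ii\sqrt{\prod\lambda_i}$ and simultaneously fixes the labelling of $\alpha^\lambda_+$ versus $\alpha^\lambda_-$.
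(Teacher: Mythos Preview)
The paper does not give a proof of this proposition; it is stated as a classical result of Schur, with \cite{HofHum,Mor} as references. So there is no ``paper's proof'' to compare against, and the question is simply whether your argument is complete.

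Your Clifford-theory setup is correct and is the standard route: the dichotomy self-associate versus non-self-associate is exactly governed by the parity of $d-\ell(\lambda)$, for the reason you give, and this yields the parametrization together with $\alpha^\lambda_++\alpha^\lambda_-=\varphi^\lambda_+|_{\ASpin_d}$ in the split case. The gap is in the determination of the difference character $\beta=\alpha^\lambda_+-\alpha^\lambda_-$.

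You assert that ``on every $\Spin_d$-class that remains a single $\ASpin_d$-class the two constituents agree'' (true) and then that ``the only class on which $\alpha^\lambda_+$ and $\alpha^\lambda_-$ differ is the one of cycle type~$\lambda$''. The second claim is what needs proof, and your justification (that the other $\rho\in\OP$ classes do not split) is false in general. For example, for $d=8$ and $\lambda=(6,2)\in\mathcal{D}^+$, the class of cycle type $(7,1)\in\OP\cap\SP$ has $\Sym_8$-centralizer $\ZZ_7$, which lies in $A_8$; hence the corresponding $\Spin_8$-class \emph{does} split in $\ASpin_8$, and a priori $\beta_{(6,2)}$ could be supported there. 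Showing that it is not---equivalently, that each $\beta_\lambda$ is supported precisely on the cycle-type-$\lambda$ class---requires either Schur's explicit construction of the representations, or a careful argument combining the classification of $\ASpin_d$-conjugacy classes with the mutual orthogonality of the $\beta_\mu$ for all $\mu\in\mathcal{D}^+$. Your orthonormality computation of $|\beta|$ is valid only once this support is established.

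The second gap you yourself flag: that $\beta$ is purely imaginary. The ``comparison with \cref{prop:charSpin}(ii)'' is a heuristic, not a proof---that formula concerns $\Spin_d$ in the opposite parity, and there is no logical implication. The imaginary nature of $\beta$ (equivalently, that the two $\ASpin_d$-halves of the $\lambda$-class are non-real, i.e.\ exchanged under $x\mapsto x^{-1}$) is again part of the substantive content of Schur's construction; see \cite[Chapter~8]{HofHum}.
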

Note that the values of $\varphi^\lambda_+(x)$ determined by the Murnaghan--Nakayama rule are real; hence, $\varphi^\lambda_+(x)$ is real if $x$ is as in case~(i) and purely imaginary if $x$ is as in case~(ii) in \cref{prop:charSpin}. The analogue of \cref{cor:halfspin} is therefore the following result.
\begin{corollary} For all $\lambda\in\SP(d)$ with $d-\ell(\lambda)$ even
\[ \frac{1}{|\ASpin_d|}\sum_{x\in \ASpin_d} (\Re\alpha^{\lambda}_{\pm}(x))^2 \= \frac{1}{2} \= \frac{1}{|\ASpin_d|}\sum_{x\in \ASpin_d} (\Im\alpha^{\lambda}_{\pm}(x))^2.\]
\end{corollary}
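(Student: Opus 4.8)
The plan is to reduce both identities to a single computation via orthonormality of irreducible characters. Because $d-\ell(\lambda)$ is even, \cref{prop:charASpin} tells us that $V^\lambda_+$ splits into two irreducible $\ASpin_d$-representations with characters $\alpha^\lambda_\pm$; hence each $\alpha^\lambda_\pm$ is a genuine irreducible character, and orthonormality gives
\[ \frac{1}{|\ASpin_d|}\sum_{x\in\ASpin_d} |\alpha^\lambda_\pm(x)|^2 \= 1.\]
Since $|\alpha^\lambda_\pm|^2 = (\Re\alpha^\lambda_\pm)^2 + (\Im\alpha^\lambda_\pm)^2$, the two sums in the statement add to $1$, so it suffices to evaluate the imaginary one and show it equals $\tfrac12$; the real one then follows by subtraction.

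Next I would isolate where $\alpha^\lambda_\pm$ fails to be real. Because $d-\ell(\lambda)$ is even, case~(ii) of \cref{prop:charSpin} never applies to our $\lambda$, so the only nonzero values of $\varphi^\lambda_+$ come from the Murnaghan--Nakayama rule of case~(i) and are therefore real. Reading off \cref{prop:charASpin}: in case~(i) one has $\epsilon(\rho)=0$ for $x\in\ASpin_d$ (as $\pi(x)$ is even, so $d-\ell(\rho)$ is even), whence $\alpha^\lambda_\pm(x)=\tfrac12\varphi^\lambda_+(x)$ is real; the only imaginary contribution occurs in case~(ii), where $\pi(x)$ has cycle type~$\lambda$ and
\[\Im\,\alpha^\lambda_\pm(x) \= \pm\tfrac12\sqrt{\prod\lambda_i}.\]
Thus $\frac{1}{|\ASpin_d|}\sum_x (\Im\alpha^\lambda_\pm(x))^2$ equals $\tfrac14\prod\lambda_i$ times the number of $x\in\ASpin_d$ with $\pi(x)$ of cycle type~$\lambda$, divided by $|\ASpin_d|$.

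Finally I would carry out the counting, exactly mirroring the proof of \cref{cor:halfspin}. Since $\lambda$ is strict its centralizer order is $\prod\lambda_i$, so its conjugacy class in $\Sym_d$ has size $d!/\prod\lambda_i$; as $\pi$ is two-to-one and both preimages of an even permutation have degree $0$, there are $2d!/\prod\lambda_i$ such elements $x\in\ASpin_d$. Using $|\ASpin_d|=|\Spin_d|/2=d!$ we get
\[ \frac{1}{|\ASpin_d|}\sum_{x\in\ASpin_d}(\Im\alpha^\lambda_\pm(x))^2 \= \frac{1}{d!}\cdot\frac{2d!}{\prod\lambda_i}\cdot\frac{\prod\lambda_i}{4}\=\frac12,\]
and the real-part sum equals $1-\tfrac12=\tfrac12$. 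The only points needing care---and the closest thing to an obstacle---are confirming that $\varphi^\lambda_+$ is everywhere real when $d-\ell(\lambda)$ is even, so that the imaginary part is concentrated on the single class of type~$\lambda$, and getting the conjugacy-class bookkeeping right; both are genuinely routine, with the roles of the two halves in \cref{cor:halfspin} now played by the real and imaginary parts of a single character $\alpha^\lambda_\pm$.
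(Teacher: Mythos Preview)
Your argument is correct and is precisely the approach the paper has in mind: the sentence preceding the corollary records that the Murnaghan--Nakayama values of $\varphi^\lambda_+$ are real (so for $d-\ell(\lambda)$ even the imaginary part of $\alpha^\lambda_\pm$ is concentrated on the class of type~$\lambda$), and the paper then simply invokes the analogy with \cref{cor:halfspin}, whose proof is exactly orthonormality of irreducible characters combined with the class-size count $2d!/\prod\lambda_i$. Your write-up fills in the details the paper leaves implicit, including the observation that $\epsilon(\rho)=0$ for $x\in\ASpin_d$, so the scalar $2^{\epsilon(\rho)-1}$ is real; nothing further is needed.
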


We now return to the spin symmetric group. Spin representations are representations of the \emph{twisted group algebra}
\[\mathcal{T}_d	:= \CC[\Spin_d]/(\varepsilon+1).\]
Note that $\mathcal{T}_d$ inherits the grading, and hence is a superalgebra. The irreducible supermodules of $\mathcal{T}_d$ (i.e., irreducible $\ZZ/2\ZZ$-graded modules) can easily be determined in terms of the irreducible modules of $\Spin_d$. 
\begin{proposition}
The irreducible supermodules of $\mathcal{T}_d$ are given by 
\[ V^\lambda = \begin{cases} 
V^\lambda_+ & d-\ell(\lambda) \text{ is even} \\
V^\lambda_+ \oplus V^\lambda_- & d-\ell(\lambda) \text{ is odd.}
\end{cases}
\]
\end{proposition}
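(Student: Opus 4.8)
The plan is to read off the irreducible supermodules from the super-analogue of Wedderburn theory applied to $\mathcal{T}_d$, and then to match the resulting dichotomy against Schur's classification of the ordinary irreducibles $V^\lambda_\pm$ recalled in \cref{prop:charSpin}. First I would note that $\mathcal{T}_d$ is semisimple as an ungraded algebra, being a quotient of the group algebra $\CC[\Spin_d]$; with its inherited grading it is therefore a semisimple superalgebra, and its simple supermodules correspond to the simple summands of its super-Wedderburn decomposition. Over $\CC$ each such summand is of one of exactly two types: type $\mathrm{M}$, for which the superalgebra of module endomorphisms is $\CC$, and type $\mathrm{Q}$, for which it is $\CC\oplus\CC\theta$ with $\theta$ odd and $\theta^2=1$. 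Forgetting the grading, a type $\mathrm{M}$ supermodule restricts to a single ungraded simple module, while a type $\mathrm{Q}$ supermodule $M$ restricts to $N_+\oplus N_-$ with $N_\pm$ non-isomorphic ungraded simple modules, cut out by the idempotents $\tfrac12(1\pm\theta)$.

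The bridge between supermodules and ordinary modules is that the grading, for which $\deg(t_i)=1$, is precisely the grading by the character $\sgn\circ\pi$. Reversing the grading of a supermodule therefore corresponds to twisting an ordinary module by the sign character, and for every supermodule $M$ the map acting as $(-1)^{|m|}$ on homogeneous $m$ gives an isomorphism $\overline{M}\cong \overline{M}^{\sgn}$ between the underlying ungraded module and its sign-twist. Hence an ungraded simple spin module underlies a supermodule if and only if it is \emph{self-associate}, i.e.\ isomorphic to its own sign-twist.

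It remains to match self-associateness with the parity of $d-\ell(\lambda)$. By \cref{prop:charSpin} the sign-twist multiplies $\varphi^\lambda_\pm$ by $\sgn\circ\pi$, which equals $+1$ on the classes where $\pi(x)$ has only odd cycles (these are even permutations) and, when $d-\ell(\lambda)$ is odd, equals $-1$ on the class of type $\lambda$, where it exchanges the values $\pm\tfrac{\ii}{\sqrt2}\sqrt{\prod\lambda_i}$ of case (ii). Thus $V^\lambda_+$ and $V^\lambda_-$ form a non-isomorphic associate pair exactly when $d-\ell(\lambda)$ is odd, and coincide in a single self-associate module when $d-\ell(\lambda)$ is even. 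Combined with the type dichotomy, this yields the statement: for $d-\ell(\lambda)$ even the self-associate $V^\lambda_+$ carries an irreducible type $\mathrm{M}$ super structure, so $V^\lambda=V^\lambda_+$, while for $d-\ell(\lambda)$ odd the associate pair fuses into one irreducible type $\mathrm{Q}$ supermodule, so $V^\lambda=V^\lambda_+\oplus V^\lambda_-$.

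The only genuinely nontrivial point is the type dichotomy: one must exclude a self-associate module giving a type $\mathrm{Q}$ summand, and an associate pair splitting into two type $\mathrm{M}$ summands. Both are forced by the super Schur lemma together with the isomorphism $\overline{M}\cong\overline{M}^{\sgn}$ — which forbids a type $\mathrm{M}$ structure on a non-self-associate module — the remaining case being settled by constructing the grading on $N_+\oplus N_-$ explicitly from the decomposition $\mathcal{T}_d=\mathcal{T}_{d,0}\oplus \mathcal{T}_{d,0}\,t_1$ into even part and its odd translate. The rest is bookkeeping with the classification already recalled in \cref{prop:charSpin}.
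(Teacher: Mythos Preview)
Your argument is correct and is precisely the standard one via the super--Wedderburn decomposition and the type~$\mathrm{M}$/type~$\mathrm{Q}$ dichotomy for simple superalgebras over~$\CC$. Note that the paper does not actually prove this proposition: it is stated as a known fact, with the surrounding text pointing to ``generalities on superalgebras (see, e.g., [Kle, Chapter~12])'' for the passage between supermodules and ordinary modules. What you have written is exactly the content of that reference specialised to~$\mathcal{T}_d$, so there is nothing to compare---you have supplied the omitted proof.

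One small point of presentation: your last paragraph slightly obscures why a self-associate simple cannot sit inside a type~$\mathrm{Q}$ summand. The cleanest way to see this is purely on the algebra side: a type~$\mathrm{Q}$ simple superalgebra $Q(n)$, forgotten as an ungraded algebra, is $M_n(\CC)\oplus M_n(\CC)$, hence contributes \emph{two} distinct ungraded simples (swapped by the grading automorphism, i.e.\ by sign-twist), whereas a type~$\mathrm{M}$ summand $M(m|n,\CC)$ is $M_{m+n}(\CC)$ ungraded and contributes a single self-associate one. Counting the ungraded Wedderburn components of~$\mathcal{T}_d$ against Schur's list in \cref{prop:charSpin} then forces the matching you describe, with no further case analysis needed.
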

If we were only to know the supermodules of $\mathcal{T}_d\mspace{1mu}$, generalities on superalgebras (see, e.g., \cite[Chapter~12]{Kle}) would allow us to obtain the irreducible supermodules of $\Spin_d$ and $\ASpin_d$. By doing so, we would almost be able to recover \cref{prop:charSpin} and \cref{prop:charASpin}.
There is, however, an important subtlety: by doing so we lose information about the characters $\varphi^\lambda_{\pm}$ and $\alpha^\lambda_{\pm}$ at $x$ for which $\pi(x)$ has cycle type~$\lambda$, and it would require more specific information about the supermodules to recover part~(ii) in these results. Soon, we will come across the same subtlety for a different supermodule. 

Instead of working with $\mathcal{T}_d\mspace{1mu}$, often it is more convenient to work with the Sergeev superalgebra. Both algebras capture the same information. Before introducing this algebra, we introduce two more groups. Let
\[\Cliff_d \defis \{\xi_1,\ldots,\xi_d,\varepsilon \mid \varepsilon^2=1,\ \xi_i^2=\varepsilon,\ \varepsilon \xi_i=\xi_i \varepsilon,\ \xi_i\xi_j=\varepsilon\xi_j\xi_i \text{ for all } i\neq j\}\]
be the \emph{Clifford group}, which is a central extension of $(\ZZ/2\ZZ)^d$. Following Sergeev \cite{Ser}, we define the \emph{Sergeev group} $\Sergeev_d$ as the semidirect product
\[ \Sergeev_d \defis \Sym_d \ltimes \Cliff_d,\]
where $\Sym_d$ acts on $\Cliff_d$ by permuting the $\xi_i$.  
(Some authors define the Sergeev group slightly differently by setting $\xi_i^2=1$ instead of $\xi_i^2=\varepsilon$. The representation theory of these two groups is the same; note, however, that the two Sergeev groups are non-isomorphic.)

Before we study the representation theory of $\Sergeev_d\mspace{1mu}$, we describe the conjugacy classes~$C$ such that $C\cap\varepsilon C =\emptyset$. Namely, if the latter condition is not satisfied, every spin representation is trivial on $C$. 
\begin{lemma}[{\cite[Lemma~5]{Ser}}] Let $g=(\sigma, \xi) \in \Sergeev_d$ 
and write $\xi=\varepsilon^{a_0}\xi_1^{a_1}\cdots \xi_n^{a_n}$ for $a_i\in \{0,1\}$. Then, $g$ is not conjugate to $\varepsilon g$ if and only if 
\begin{enumerate}
\item[{\upshape (1)}] $\deg(\xi)=0$, the cycle type of $\sigma$ is in $\OP$, and $\sum_{j\in \tau} a_j$ is even for all disjoint cycles~$\tau$ of $\sigma$;
\item[{\upshape (2)}] $\deg(\xi)=1$, the cycle type of $\sigma$ is $\lambda\in \SP$ with $\ell(\lambda)$ odd and $\sum_{j\in \tau} a_j$ is odd for all disjoint cycles~$\tau$ of $\sigma$.
\end{enumerate}
\end{lemma}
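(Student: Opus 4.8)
The plan is to recast ``$g$ is not conjugate to $\varepsilon g$'' as the vanishing of an explicit homomorphism and then to compute that homomorphism cycle by cycle. Write $\bar G=\Sergeev_d/\langle\varepsilon\rangle\cong\Sym_d\ltimes(\ZZ/2\ZZ)^d$, let $\bar g$ be the image of $g=(\sigma,\xi)$, and let $C_{\bar G}(\bar g)$ be its centraliser. Since $\langle\varepsilon\rangle$ is central, for every $\bar h\in C_{\bar G}(\bar g)$ and every lift $h\in\Sergeev_d$ the commutator $hgh^{-1}g^{-1}$ lies in $\langle\varepsilon\rangle$ and is independent of the lift; this defines a homomorphism $\phi\colon C_{\bar G}(\bar g)\to\ZZ/2\ZZ$, and a standard argument for central extensions gives that $g$ is conjugate to $\varepsilon g$ if and only if $\phi$ is nontrivial. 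As the cycle type of $\sigma$ and $\deg(\xi)$ are conjugacy invariants that are unchanged by multiplication by $\varepsilon$, it suffices to compute $\phi$ with these data fixed. I would first record the two elementary sign sources feeding into $\phi$: the relation $\xi_i\xi_j=\varepsilon\xi_j\xi_i$ (conjugating one Clifford monomial by another multiplies it by $\varepsilon$ to the number of crossing pairs), and the identity $\eta_\tau\,\sigma_\tau\,\eta_\tau^{-1}=\varepsilon^{m_\tau-1}\sigma_\tau$ obtained by rotating the product $\eta_\tau=\prod_{k\in\tau}\xi_k$ once around an $m_\tau$-cycle $\sigma_\tau$.

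Next I would normalise $\xi$ and describe the generators of $C_{\bar G}(\bar g)$. Because the image of $\mathrm{id}-\sigma^{-1}$ consists of the vectors whose coordinate-sum vanishes on each cycle, Clifford conjugation preserves each parity $b_\tau:=\sum_{j\in\tau}a_j\bmod 2$ and otherwise reduces $\xi$ to the trivial monomial on a cycle with $b_\tau$ even and to a single generator $\xi_{p(\tau)}$ on a cycle with $b_\tau$ odd. With $\xi$ so normalised, $C_{\bar G}(\bar g)$ is generated by three families: the translations by cycle-constant vectors, which lift to the $\eta_\tau$; the one-step cyclic rotations inside each cycle, which lift to $\sigma_\tau$ corrected by a single Clifford generator when $b_\tau$ is odd; and the permutations swapping two cycles of equal length. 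The whole problem reduces to evaluating $\phi$ on these generators.

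The three evaluations are short manipulations of the defining relations and give the heart of the statement. On a translation one finds $\phi(\eta_\tau)\equiv(m_\tau-1)+m_\tau\deg(\xi)+b_\tau\pmod 2$; in particular a cycle of even length with $b_\tau$ even always yields $\phi(\eta_\tau)=1$. On a swap of two equal-length cycles one finds $\phi$ equal to their common value $b_\tau$. On a corrected rotation the sign comes from the correcting generator anticommuting with the $\deg(\xi)-b_\tau$ generators supported on the other cycles, giving $\phi(\mathrm{rot}_\tau)\equiv b_\tau(\deg(\xi)-1)\pmod 2$. Assembling these, $\phi\equiv 0$ forces: from the rotations, all $b_\tau$ even when $\deg(\xi)$ is even; from the translations, then every cycle odd; this is exactly case~(1). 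When $\deg(\xi)$ is odd the translation condition forces every $b_\tau$ odd, the rotation condition becomes automatic, and the swap condition then forbids equal cycle lengths, i.e.\ $\lambda$ strict; together with $\deg(\xi)$ odd this is case~(2) (and $\ell(\lambda)$ odd is automatic). Conversely, in cases~(1) and~(2) the same formulas show $\phi$ vanishes on every generator, so $g\not\sim\varepsilon g$; whenever a condition fails, the offending generator conjugates $g$ to $\varepsilon g$ explicitly.

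The main obstacle is the sign bookkeeping in the rotation family: unlike the translations and swaps, a rotation lifts only after a Clifford correction that is determined merely up to the cycle-constant translations, so I must check that $\phi(\mathrm{rot}_\tau)$ is well defined and is correctly captured by the parity of $\deg(\xi)-b_\tau$ (this is exactly the mechanism producing the even/odd-degree dichotomy between the two cases). The remaining care lies in verifying that the three families really generate $C_{\bar G}(\bar g)$ and that the case analysis is exhaustive, in particular for fixed points and for repeated even lengths; once the homomorphism $\phi$ is in hand, these are direct consequences of the relations defining $\Sergeev_d$.
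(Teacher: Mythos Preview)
The paper does not prove this lemma; it is quoted verbatim from Sergeev's original article and used as a black box. So there is nothing to compare against in the paper itself.

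Your strategy is the natural one and is essentially how such statements are proved: pass to the quotient $\bar G=\Sergeev_d/\langle\varepsilon\rangle$, encode ``$g\sim\varepsilon g$'' as the nontriviality of the commutator homomorphism $\phi\colon C_{\bar G}(\bar g)\to\langle\varepsilon\rangle$, normalise $\xi$ along each cycle, and evaluate $\phi$ on the three standard families of centraliser generators (cycle-constant Clifford translations, corrected rotations, and swaps of equal-length cycles). I checked your stated formulas $\phi(\eta_\tau)\equiv(m_\tau-1)+m_\tau\deg(\xi)+b_\tau$, $\phi(\mathrm{rot}_\tau)\equiv b_\tau(\deg(\xi)-1)$, and $\phi(\mathrm{swap})\equiv b_\tau$ against several small examples and they hold; the ensuing case split then gives exactly conditions~(1) and~(2).

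Two points deserve the extra care you yourself flag. First, a swap of two cycles of equal length lifts to the centraliser only when $b_{\tau_1}=b_{\tau_2}$, so strictly speaking your generator list for $C_{\bar G}(\bar g)$ must be stated with that proviso; this does not affect the argument, since in the case $\deg(\xi)$ odd you have already forced all $b_\tau=1$ before invoking swaps. Second, for fixed points ($m_\tau=1$) the rotation generator is trivial, so your rotation formula carries no information there; you must then rule out the bad configurations (some $b_\tau$ odd when $\deg(\xi)$ is even) via the other generators. This works because the parity constraint $\sum_\tau b_\tau\equiv\deg(\xi)$ guarantees a second cycle with $b=1$, and either its rotation (if $m\geq 2$) or the swap between the two fixed points (if both have $m=1$) then witnesses $\phi=1$. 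Once these two edge cases are spelled out, the argument is complete.
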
 
The first case corresponds to the conjugacy class of a pure permutation $(\sigma,1)\in \Sergeev_d$. Note that this conjugacy class only depends on the cycle type $\rho\in \OP$ of $\sigma$; we denote this conjugacy class by~$C_\rho\mspace{1mu}$. In the second case, we write $C_{\lambda,1}$ for the corresponding conjugacy class. 

On $\Cliff_d$  we introduce the $\ZZ/2\ZZ$-grading by setting $\deg \xi_i=1$. This grading extends to $\Sergeev_d$ by additionally letting $\deg \sigma = \deg \varepsilon =0$ for $\sigma \in \Sym_d\mspace{1mu}$. Define the corresponding \emph{twisted algebras} by
\[ \mathcal{C}_d \defis \CC[\Cliff_d]/(\varepsilon+1), \qquad \mathcal{X}_d \defis \CC[\Sergeev_d]/(\varepsilon+1), \]
which both are superalgebras. The following results determine the corresponding irreducible supermodules. In these results, it is important to recall that the tensor product of two superalgebras~$\mathcal{A}$ and $\mathcal{B}$ is not the same as the tensor product of two algebras. Instead, the multiplication is defined by
\[ (a\otimes b)(a'\otimes b') = (-1)^{\deg(b)\deg(a')}(aa')\otimes (bb') \qquad (a,a'\in \mathcal{A}, b,b'\in \mathcal{B}).\]
\begin{proposition}
The Clifford superalgebra~$\mathcal{C}_d$ is irreducible of dimension~$2^{\lfloor \frac{d+1}{2}\rfloor}$. 
\end{proposition}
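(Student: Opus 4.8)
The plan is to realise $\mathcal{C}_d$ as an iterated super tensor product of copies of $\mathcal{C}_1$ and then read off its module theory from the classification of simple superalgebras over $\CC$ recalled in \cite[Chapter~12]{Kle}. Since $\varepsilon$ acts as $-1$ in $\mathcal{C}_d$, this algebra is the complex Clifford algebra generated by the odd elements $\xi_1,\dots,\xi_d$ subject to $\xi_i^2=-1$ and $\xi_i\xi_j=-\xi_j\xi_i$ for $i\neq j$; the ordered monomials $\xi_{i_1}\cdots\xi_{i_r}$ with $i_1<\cdots<i_r$ form a homogeneous basis, so $\dim\mathcal{C}_d=2^d$.

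First I would produce a superalgebra isomorphism $\mathcal{C}_d\cong\mathcal{C}_1^{\otimes d}$, where $\otimes$ denotes the super tensor product described above. Writing $\eta_i=1\otimes\cdots\otimes\xi\otimes\cdots\otimes 1$ with the single odd generator $\xi$ of $\mathcal{C}_1$ placed in the $i$-th factor, the sign rule $(a\otimes b)(a'\otimes b')=(-1)^{\deg(b)\deg(a')}(aa')\otimes(bb')$ forces $\eta_i^2=-1$ and $\eta_i\eta_j=-\eta_j\eta_i$ for $i\neq j$, since each $\eta_i$ is odd. Hence $\xi_i\mapsto\eta_i$ defines a homomorphism of superalgebras which is surjective and, by the equality of dimensions $2^d$ on both sides, an isomorphism.

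Next I would reduce to the two basic building blocks and iterate. The algebra $\mathcal{C}_1=\CC\oplus\CC\xi$ is the queer super division algebra, while a direct computation identifies $\mathcal{C}_2=\mathcal{C}_1\otimes\mathcal{C}_1$ with the simple super matrix algebra $M_{1|1}(\CC)$ (sending $\xi_1,\xi_2$ to anticommuting odd $2\times 2$ matrices squaring to $-\mathrm{Id}$, and checking the Clifford relations). Using $M_{1|1}(\CC)\otimes M_{1|1}(\CC)\cong M_{2|2}(\CC)$ and more generally $M_{m|n}\otimes M_{p|q}\cong M_{mp+nq\,|\,mq+np}$, an induction gives $\mathcal{C}_{2k}\cong M_{2^{k-1}|2^{k-1}}(\CC)$ and, tensoring once more with $\mathcal{C}_1$, $\mathcal{C}_{2k+1}\cong Q(2^k)$, the queer matrix superalgebra. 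Both $M_{m|n}(\CC)$ and $Q(n)$ are simple superalgebras, so $\mathcal{C}_d$ is simple; by the classification, $M_{m|n}(\CC)$ has a unique irreducible supermodule, of dimension $m+n$, and $Q(n)$ has a unique irreducible supermodule, of dimension $2n$. For $d=2k$ this dimension is $2^{k-1}+2^{k-1}=2^k$, and for $d=2k+1$ it is $2\cdot 2^k=2^{k+1}$; in both cases it equals $2^{\lfloor(d+1)/2\rfloor}$, as claimed.

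The one point demanding care---and exactly the subtlety flagged before the statement---is the odd case $\mathcal{C}_{2k+1}\cong Q(2^k)$. For a type-$Q$ algebra the unique irreducible supermodule is not absolutely irreducible as an ungraded module: its endomorphism superalgebra is $\mathcal{C}_1$ rather than $\CC$, which is precisely why its dimension is $2n$ and not $n$, and why the odd generator persists as an extra odd endomorphism. I would therefore be careful to invoke the super version of Schur's lemma rather than its ungraded counterpart when counting the irreducible supermodule, since conflating the two would halve the answer in the odd case.
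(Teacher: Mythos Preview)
Your argument is correct and is the standard route via the structure theory of complex Clifford superalgebras: decompose $\mathcal{C}_d$ as a super tensor power of $\mathcal{C}_1$, identify the result with $M_{2^{k-1}|2^{k-1}}(\CC)$ or $Q(2^k)$ according to the parity of $d$, and read off the dimension of the unique irreducible supermodule. The dimension counts and the tensor identities you invoke (in particular $M_{m|n}\otimes M_{p|q}\cong M_{mp+nq\,|\,mq+np}$ and $Q(1)\otimes M_{m|m}\cong Q(2m)$) are exactly the ones appearing in \cite[Chapter~12]{Kle}, and your remark about the type-$Q$ subtlety in odd dimension is to the point.

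As for comparison: the paper does not supply a proof of this proposition. It is stated as a known fact, with the reference to \cite{Kle} in the surrounding discussion serving as the implicit justification; the only thing the paper records is the consequence that the character $\zeta$ of the unique irreducible supermodule satisfies $\zeta(1)=2^{\lfloor(d+1)/2\rfloor}$ and vanishes elsewhere. So there is nothing in the paper to compare against beyond the citation, and your write-up is precisely the argument one would extract from that reference.
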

In particular, the character $\zeta$ corresponding to $\mathcal{C}_d$ satisfies $\zeta(1)=2^{\lfloor \frac{d+1}{2}\rfloor}$ and $\zeta(x)=0$ if $x\neq 1$. 
\begin{proposition}
The map $\vartheta_d:\mathcal{T}_d \otimes \mathcal{C}_d \to \mathcal{X}_d$ given by
\begin{align*}
t_j\otimes 1 &\mapsto \frac{1}{\sqrt{2}}(\xi_j-\xi_{j+1})(j,j+1) \\
 1\otimes \xi_j&\mapsto \xi_j
\end{align*}
is an isomorphism of superalgebras. 
\end{proposition}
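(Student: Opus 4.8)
The plan is to verify that $\vartheta_d$ is a well-defined homomorphism of superalgebras, that it is surjective, and that its source and target have equal finite dimension; a surjective, grading-preserving algebra map between superspaces of equal dimension is automatically an isomorphism of superalgebras, so these three facts suffice.

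First I would confirm that $\vartheta_d$ is grading-preserving and well defined. The super-tensor product $\mathcal{T}_d\otimes\mathcal{C}_d$ is presented by the generators $t_j\otimes 1$ and $1\otimes\xi_k$, subject to the spin relations among the $t_j\otimes 1$, the Clifford relations among the $1\otimes\xi_k$, and the super-commutation $(t_j\otimes1)(1\otimes\xi_k)=-(1\otimes\xi_k)(t_j\otimes1)$ reflecting that both factors are odd. Writing $T_j\defis\tfrac{1}{\sqrt2}(\xi_j-\xi_{j+1})(j,j+1)$ for the image of $t_j\otimes1$, I must check three families of relations in $\mathcal{X}_d$. The Clifford relations $\xi_i^2=\varepsilon$ and $\xi_i\xi_j=\varepsilon\xi_j\xi_i$ hold tautologically, since $1\otimes\xi_k\mapsto\xi_k$. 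The super-commutation amounts to the anticommutation $T_j\,\xi_k=-\xi_k\,T_j$ for all $j,k$; pushing the transposition past $\xi_k$ via $(j,j+1)\,\xi_k=\xi_{(j,j+1)(k)}(j,j+1)$ reduces this to the cases $k\notin\{j,j+1\}$, $k=j$ and $k=j+1$, each of which follows from $\xi_i^2=\varepsilon$ and the anticommutation of distinct generators. Finally there are the spin-braid relations $T_j^2=\varepsilon$, $(T_jT_{j+1})^3=\varepsilon$ and $(T_jT_k)^2=\varepsilon$ for $|j-k|\ge2$. Grading is preserved because $(j,j+1)$ is even while each $\xi_i$ is odd, so $T_j$ and $\xi_k$ are both odd, matching the degrees of $t_j\otimes1$ and $1\otimes\xi_k$.

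I expect the spin-braid relations, and in particular $(T_jT_{j+1})^3=\varepsilon$, to be the main obstacle: this is the only relation involving three overlapping Clifford generators $\xi_j,\xi_{j+1},\xi_{j+2}$ together with two adjacent transpositions, so---unlike the far-commutation and the square relation, which collapse quickly---it requires carefully tracking both the $\Sym_d$-action on the $\xi$'s and the normalization $\tfrac{1}{\sqrt2}$ against the Clifford and super-signs. Once the relations are established, surjectivity is immediate: the image contains every $\xi_k$, and since $(\xi_j-\xi_{j+1})^2$ is a nonzero scalar one recovers the transposition $(j,j+1)$ as a scalar multiple of $(\xi_j-\xi_{j+1})\,T_j$; as the transpositions generate $\Sym_d$ and the $\xi_k$ generate $\Cliff_d$, the image is all of $\mathcal{X}_d$. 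Finally I would match dimensions: each twisted algebra is the $\varepsilon=-1$ eigenspace in the corresponding group algebra, giving $\dim\mathcal{T}_d=d!$, $\dim\mathcal{C}_d=2^d$ and $\dim\mathcal{X}_d=d!\,2^d$, whence $\dim(\mathcal{T}_d\otimes\mathcal{C}_d)=d!\,2^d=\dim\mathcal{X}_d$. A surjective, grading-preserving algebra homomorphism between superspaces of equal finite dimension is an isomorphism of superalgebras, which completes the proof.
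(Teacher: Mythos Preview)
The paper does not supply a proof of this proposition; it is quoted as a known structural fact from the literature (Sergeev, and e.g.\ Kleshchev's book cited as~[Kle]). Your outline---check that the images of the generators satisfy the defining relations of $\mathcal{T}_d\otimes\mathcal{C}_d$, verify the grading, then conclude by surjectivity plus a dimension count---is exactly the standard argument and is correct.
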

We write $\delta(\lambda)$ and $\epsilon(\lambda)$ for the parity of $\ell(\lambda)$ and $d-\ell(\lambda)$ respectively.  
\begin{corollary}\label{cor:Xd}
The irreducible supermodules of $\mathcal{X}_d$ are given by $V_\lambda\otimes \mathcal{C}_d$, where~$\lambda$ goes over all strict partitions. 
The corresponding character $\theta^\lambda$ is given by 
\[ \theta^\lambda(x) \= \begin{cases} 2^{\frac{\ell(\rho)+\delta(\lambda)-\epsilon(\lambda)}{2}}\varphi^\lambda(\rho) & x \in C_\rho \text{ for some } \rho \in \OP\\
0 & x\not \in C_\rho \cup \varepsilon C_\rho \text{ for some } \rho \in \OP,
\end{cases}
\]
where $\varphi^\lambda(\rho)$ is the character of $V_\lambda$ evaluated at a permutation of type $\rho\in\OP$.
\end{corollary}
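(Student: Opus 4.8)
The plan is to compute the character $\theta^\lambda$ of the irreducible $\mathcal{X}_d$-supermodule $V_\lambda \otimes \mathcal{C}_d$ by exploiting the superalgebra isomorphism $\vartheta_d : \mathcal{T}_d \otimes \mathcal{C}_d \to \mathcal{X}_d$ together with the multiplicativity of \emph{super}characters over graded tensor products. The key algebraic fact I would invoke is that if $\chi_A, \chi_B$ are the supercharacters of supermodules over superalgebras $\mathcal{A}, \mathcal{B}$, then the supercharacter of the graded tensor product module satisfies $\chi_{A\otimes B}(a\otimes b) = \chi_A(a)\,\chi_B(b)$, but with the crucial caveat that this holds only for \emph{homogeneous} elements and that on the \emph{odd} part one must use the super-trace rather than the ordinary trace. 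Since $\mathcal{C}_d$ is the single irreducible Clifford supermodule with $\zeta(1) = 2^{\lfloor (d+1)/2\rfloor}$ and $\zeta(x) = 0$ for $x \neq 1$, the Clifford factor forces the character to vanish unless the Clifford component of $x$ is (conjugate to) the identity, which is precisely why the formula is supported on the classes $C_\rho \cup \varepsilon C_\rho$ for $\rho \in \OP$.

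First I would transport a given element $x \in C_\rho$ (with $\rho \in \OP$) back through $\vartheta_d^{-1}$ into $\mathcal{T}_d \otimes \mathcal{C}_d$ and expand it in the homogeneous tensor basis. Because $\rho$ is an odd partition, the permutation $\pi(x)$ lies in the class where the Murnaghan--Nakayama rule of \cref{prop:charSpin} applies, so the $\mathcal{T}_d$-factor contributes the spin character value $\varphi^\lambda(\rho)$. Next I would track the power of $2$ carefully: the normalization $\tfrac{1}{\sqrt 2}(\xi_j - \xi_{j+1})(j,j+1)$ in the definition of $\vartheta_d$ introduces factors of $\sqrt 2$ per transposition, and assembling these over a cycle type $\rho$ of length $\ell(\rho)$ produces a cumulative power governed by $\ell(\rho)$. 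Combining this with the dimension $2^{\lfloor (d+1)/2\rfloor}$ coming from the Clifford supertrace, and with the parity corrections recorded by $\delta(\lambda)$ and $\epsilon(\lambda)$ (which bookkeep whether the relevant supermodules are of type $\mathbf M$ or type $\mathbf Q$ in the sense of superalgebra theory), should yield the exponent $\tfrac{\ell(\rho) + \delta(\lambda) - \epsilon(\lambda)}{2}$ stated in the corollary.

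The vanishing case is comparatively quick: if $x \notin C_\rho \cup \varepsilon C_\rho$ for every $\rho \in \OP$, then by the preceding lemma of Sergeev the element $x$ is conjugate to $\varepsilon x$, and since $\varepsilon$ acts by $-1$ on any spin supermodule, every spin character — in particular $\theta^\lambda$ — must vanish on $x$. I would state this as the immediate consequence of $\varepsilon$-centrality, exactly as flagged in the text preceding the statement of the lemma.

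The main obstacle I anticipate is the correct treatment of the super-tensor-product character formula in the presence of supermodules of type $\mathbf Q$ (the self-associate case, where $d - \ell(\lambda)$ has a fixed parity). In ordinary representation theory characters of tensor products multiply without fuss, but for superalgebras the Clifford algebra $\mathcal{C}_d$ is a $\mathbf Q$-type object, and when it is tensored with a $\mathbf Q$-type $\mathcal{T}_d$-module the product is an $\mathbf M$-type module of \emph{half} the naive dimension (there is an extra factor of $\tfrac1{\sqrt 2}$, equivalently a shift of $\tfrac12$ in the exponent of $2$). Getting these type-dependent factors of $2$ to assemble into the clean exponent $\tfrac{\ell(\rho)+\delta(\lambda)-\epsilon(\lambda)}{2}$ — and in particular verifying consistency with the special value $\varphi^\lambda_\pm(x) = \pm\tfrac{\mathrm i}{\sqrt 2}\sqrt{\prod\lambda_i}$ on the class of type $\lambda$ itself, which is exactly the subtlety flagged after the supermodule proposition — is where I would expect to spend the real effort, and I would handle it by appealing to the structural dictionary for $\mathbf M$- and $\mathbf Q$-type superalgebras in a reference such as \cite{Kle}.
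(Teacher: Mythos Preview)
Your approach is essentially the implicit argument the paper intends: the corollary is stated without proof immediately after the two propositions recording that $\mathcal{C}_d$ is the unique irreducible Clifford supermodule and that $\vartheta_d:\mathcal{T}_d\otimes\mathcal{C}_d\to\mathcal{X}_d$ is a superalgebra isomorphism, and your proposal simply unpacks how the character formula follows from these two inputs together with the $\mathbf{M}/\mathbf{Q}$-type bookkeeping from \cite{Kle}. Your identification of the $\mathbf{Q}\otimes\mathbf{Q}$ case (both $\epsilon(\lambda)=1$ and $d$ odd) as the source of the extra factor of $\tfrac12$, and of the $\tfrac{1}{\sqrt 2}$ normalizations in $\vartheta_d$ as the source of the $\ell(\rho)$ in the exponent, is exactly right.
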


The structure of \emph{super}modules of $\mathcal{X}_d$ does not directly imply the character table of the Sergeev group, nor does seem to be contained in the literature. However, the irreducible representations of $\Sergeev_d$ were constructed by Maxim Nazarov in \cite[Section~1]{Naz97}. We deduce the following proposition from his result.
\begin{proposition}\label{prop:charSergeev}
The irreducible spin representations of $\Sergeev_d$ are parametrized by pairs $(\lambda,(\pm 1)^{\ell(\lambda)})$ for strict partitions~$\lambda$. Moreover, the character values $\chi^\lambda_{\pm}(x)$ are determined recursively by
\begin{enumerate}
\item[\upshape (i)] $\chi^\lambda_{\pm}(x)=2^{-\delta(\lambda)}\theta^\lambda(\rho)$ if $x\in C_\rho$.
\item[\upshape (ii)]\[\chi^{\lambda}_{\pm}(x) \= \pm 2^{(\ell(\lambda)-1)/2}
 \mathrm{i}\sqrt{\prod \lambda_i} 
\] when $\ell(\lambda)$ is odd and $x\in C_{\lambda,1}.$
\item[\upshape (ii)] $\chi^\lambda_{\pm}(x)=0$ in all other cases.
\end{enumerate}
\end{proposition}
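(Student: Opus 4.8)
The plan is to pass from the \emph{super}module data of $\mathcal{X}_d$ already recorded in \cref{cor:Xd} to the ordinary linear character table of $\Sergeev_d$ by means of the type-$\mathrm{M}$/type-$\mathrm{Q}$ dichotomy for simple superalgebras \cite[Chapter~12]{Kle}, and then to supply the one piece of data this dichotomy cannot see---the value on the odd split class---from Nazarov's explicit construction \cite[Section~1]{Naz97}. Recall the general principle: an irreducible supermodule of type~$\mathrm{M}$ stays irreducible as an ungraded module and yields a single ordinary irreducible, whereas one of type~$\mathrm{Q}$ has ungraded restriction $N_+\oplus N_-$ with $N_\pm$ non-isomorphic ordinary irreducibles interchanged by the odd part of its endomorphism superalgebra; the two characters $\chi_{N_\pm}$ agree on all even elements, sum to the ungraded trace, and differ only on the classes detected by that odd endomorphism.

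First I would determine the type of $V_\lambda\otimes\mathcal{C}_d$. Combining the super--tensor--product rule ($\mathrm{M}\otimes\mathrm{M}=\mathrm{M}$, $\mathrm{M}\otimes\mathrm{Q}=\mathrm{Q}$, $\mathrm{Q}\otimes\mathrm{Q}=\mathrm{M}$) with the type of $V_\lambda$ as a $\mathcal{T}_d$-module (type~$\mathrm{M}$ exactly when $d-\ell(\lambda)$ is even, read off the description of the $\mathcal{T}_d$-supermodules $V^\lambda$ recalled above) and the type of $\mathcal{C}_d$ (type~$\mathrm{M}$ exactly when $d$ is even), one finds that $V_\lambda\otimes\mathcal{C}_d$ is of type~$\mathrm{M}$ precisely when $\ell(\lambda)$ is even and of type~$\mathrm{Q}$ precisely when $\ell(\lambda)$ is odd. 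By the dichotomy this gives one ordinary irreducible for even $\ell(\lambda)$ and a pair $\chi^\lambda_\pm$ for odd $\ell(\lambda)$, which is exactly the parametrization by $(\lambda,(\pm1)^{\ell(\lambda)})$. As a consistency check, the total count equals $\#\SP(d)+\#\{\lambda\in\SP(d):\ell(\lambda)\text{ odd}\}$, which matches the number of spin conjugacy classes $C_\rho$ $(\rho\in\OP(d))$ and $C_{\lambda,1}$ $(\ell(\lambda)\text{ odd})$ from the class description above, since $\#\OP(d)=\#\SP(d)$.

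Case~(i) and the final (vanishing) case then follow formally. On the even classes $C_\rho$ the two constituents of a type~$\mathrm{Q}$ module coincide and their sum is $\theta^\lambda$, so $\chi^\lambda_\pm=\tfrac12\theta^\lambda$, while a type~$\mathrm{M}$ module has $\chi^\lambda=\theta^\lambda$ there; both are captured by the single factor $2^{-\delta(\lambda)}$. Since $\theta^\lambda$ vanishes off $C_\rho\cup\varepsilon C_\rho$ and the difference $\chi^\lambda_+-\chi^\lambda_-$ is supported on the single odd class $C_{\lambda,1}$, the character vanishes on every class other than $C_\rho$ and $C_{\lambda,1}$, which is the last case.

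The crux is the value on $C_{\lambda,1}$ in case~(ii)---precisely the information lost in passing to supermodules, the same subtlety flagged after the classification of $\mathcal{T}_d$-supermodules. Here $\theta^\lambda=0$, so $\chi^\lambda_\pm=\pm\tfrac12(\chi^\lambda_+-\chi^\lambda_-)$ and one must pin down the single number $\chi^\lambda_+(C_{\lambda,1})$. I would fix its modulus by a column/orthonormality argument: an analogue of \cref{cor:halfspin} forcing the class $C_{\lambda,1}$ to carry exactly half of the unit norm of $\chi^\lambda_+$, which together with the order $|\Sergeev_d|=2^{d+1}d!$ and the size of $C_{\lambda,1}$ gives $|\chi^\lambda_\pm|=2^{(\ell(\lambda)-1)/2}\sqrt{\prod\lambda_i}$; the purely imaginary phase is then inherited from the fact that $\varphi^\lambda$ is purely imaginary on classes of type~$\lambda$ (\cref{prop:charSpin}(ii)), and the sign merely labels the two constituents. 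The most reliable way to fix the normalization and the imaginary unit simultaneously---and the route the paper takes---is to evaluate the trace of a representative of $C_{\lambda,1}$ directly on Nazarov's explicit model of $V^\lambda_\pm$ \cite[Section~1]{Naz97}. This last evaluation is the main obstacle, being the only step not reducible to super representation theory generalities.
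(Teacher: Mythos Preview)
Your proposal is correct and ultimately rests on the same ingredient as the paper's proof: Nazarov's explicit realization of the irreducible $\Sergeev_d$-modules, from which one reads off the trace on $C_{\lambda,1}$ and then deduces the remaining vanishing by orthogonality. The paper does not invoke the type-$\mathrm{M}$/type-$\mathrm{Q}$ language at all; instead it observes directly that for $\ell(\lambda)$ even the supermodule of \cref{cor:Xd} is already ungraded-irreducible, and for $\ell(\lambda)$ odd it splits into the two sub-cases $d$ even and $d$ odd, writes down Nazarov's model in each, and computes the trace by tracking when the Clifford factor contributes the identity. Your $\mathrm{M}$/$\mathrm{Q}$ framing is a cleaner way to obtain the parametrization and case~(i) uniformly, and it explains conceptually why the splitting is governed by the parity of $\ell(\lambda)$ rather than of $d-\ell(\lambda)$; the paper's route, in exchange, makes the actual value $\pm 2^{(\ell(\lambda)-1)/2}\ii\sqrt{\prod\lambda_i}$ and its phase drop out of a concrete trace calculation rather than a separate normalization argument.

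One small logical point: your assertion that $\chi^\lambda_+-\chi^\lambda_-$ is supported on the \emph{single} odd class $C_{\lambda,1}$ does not follow from the $\mathrm{M}$/$\mathrm{Q}$ dichotomy alone---a priori the difference could live on several classes $C_{\mu,1}$ with $\ell(\mu)$ odd. You should therefore place the vanishing in the last case \emph{after} the Nazarov evaluation on $C_{\lambda,1}$ (which, together with case~(i), already saturates the character norm), exactly as the paper does, rather than before it.
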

\begin{proof}
First of all, observe that by the previous corollary the result holds if~$\ell(\lambda)$ is even. Namely, in that case, the irreducible representation corresponding to $\lambda$ equals the \emph{super}representation corresponding to $\lambda$. We now follow Nazarov's construction in the case $\ell(\lambda)$ is odd.

First, assume $d$ is even. Write $\tau^\lambda_{\pm}$ for the representations corresponding to $V^\lambda_{\pm}$. Then the irreducible representation of $\Sergeev_d$ corresponding the pair $(\lambda,\pm)$ in the space $\mathcal{C}_d \otimes V^\lambda_{\pm}$ is given by $(j=1,\ldots,d-1)$
\begin{align*}
 (j,j+1) &\mapsto \ii \xi_0\frac{\xi_j-\xi_{j+1}}{\sqrt{2}} \otimes \tau^\lambda_{\pm}(t_j), \qquad
 \xi_j \mapsto \xi_j\otimes \mathrm{id},\qquad 
 \varepsilon  \mapsto -1.
\end{align*}
Let $x\in \Sergeev_d\mspace{1mu}$. Then, as the trace of the Clifford algebra only takes values on the identity, we find
\[ \chi^\lambda_{\pm}(x) = C(x)\frac{2^{\frac{d}{2}}}{2^{\frac{d-\ell(\lambda)}{2}}}\varphi^{\lambda}_{\pm}(\rho)\]
where $C(x)$ denotes the multiplicity of the identity in $\xi_0(\xi_{1}-\xi_{2})\cdots \xi_0(\xi_{\lambda_{j_1}-1}-\xi_{\lambda_{j_r}})\xi$ if we write $x=(j_1,j_1+1)\cdots(j_r,j_r+1)\xi$ and where $\rho$ is the cycle type of the projection of $x$ to the symmetric group. Note that this multiplicity takes values in $\{-1,0,1\}$. In case $\rho\in\OP$, we obtain $\chi^\lambda_{\pm}(x)=2^{\frac{\ell(\lambda)}{2}}\varphi^\lambda_{\pm}(\rho)=2^{\frac{\ell(\lambda)-1}{2}}\theta^\lambda(\rho)$, which we could also have deduced directly from \cref{cor:Xd}. More interestingly, if $x\in C_{\lambda,1}$, we obtain
\[\chi^\lambda_{\pm}(x) = \pm 2^{(\ell(\lambda)-1)/2}\mathrm{i}\sqrt{\prod \lambda_i}.\]
That $\chi^\lambda_\pm(x)=0$ in all other cases, now easily follows from the orthogonality relations, or by a similar computation as above. 

Next, suppose $d$ is odd. Note that $\mathcal{C}_d$, considered as an ordinary module, rather than a super module, splits as a sum $\mathcal{C}_d \simeq \mathcal{C}_d^+\oplus \mathcal{C}_d^-$ of irreducible (non-super) modules $\mathcal{C}_d^{\pm}$ of dimension~$2^{\frac{d-1}{2}}$. Let $I=\left(\begin{smallmatrix} 0 & -\ii \\ \ii & 0\end{smallmatrix}\right)$ and $J=\left(\begin{smallmatrix} 1 & 0 \\ 0 & -1\end{smallmatrix}\right)$. Nazarov now first constructs a reducible representation of $\Sergeev_d$ corresponding to $\lambda$ in the space $\CC^2\otimes \mathcal{C}_d^+ \otimes V^\lambda$ by
\[(j,j+1)\mapsto I\otimes \frac{\xi_j-\xi_{j+1}}{\sqrt{2}} \otimes \tau_\lambda(t_j), \qquad \xi_j \mapsto J\otimes \xi_k\otimes \mathrm{id}.\]
Write $\omega_\lambda$ for the endomorphism of $V_\lambda$ defined by $v\mapsto (-1)^{\deg(v)} v$. Then, the $\pm 1$-eigenspaces with respect to the involution $J\otimes \mathrm{id}\otimes \omega_\lambda$ form two irreducible representations corresponding to $(\lambda,\pm)$. In particular, in this case, we obtain
\[\chi^\lambda_{\pm}(x) \=  D(x)  \frac{2^{\frac{d-1}{2}}}{2^{\frac{d-\ell(\rho)}{2}}}\cdot
\begin{cases}
\alpha_{+}^\lambda(\rho)+\alpha_{-}^\lambda(\rho) & \deg x=0 \\ 
\alpha_{+}^\lambda(\rho)-\alpha_{-}^\lambda(\rho) & \deg x=1
\end{cases}
\]
with $D(x)$ the multiplicity of the identity in $(\xi_{j_1}-\xi_{j_1+1})\cdots (\xi_{j_2}-\xi_{\lambda_{j_2+1}})\xi$ if we write $x=(j_1,j_1+1)\cdots(j_r,j_r+1)$, and where the product $(j_1,j_1+1)\cdots(j_r,j_r+1)$ is of cycle type $\rho$ (i.e., $x\in C_\rho\xi$). In other words, if $x\in C_\rho$, we obtain
$\chi^\lambda_{\pm}(x) = 2^{(\ell(\lambda)+1)/2}\alpha^\lambda_{\pm}(\rho) = 2^{(\ell(\lambda)+1)/2}\alpha^\lambda_{\pm}(\rho) = \frac{1}{2}\theta^\lambda(\rho)$.  Moreover, if $x\in C_{\lambda,1}$, we obtain
\begin{align*}
\chi^\lambda_{\pm}(x) = \pm 2^{(\ell(\lambda)-1)/2}\ii\sqrt{\prod\lambda_i}.&\qedhere
\end{align*}

\end{proof}
Similar to the spin symmetric group, we conclude that both cases contribute one-half to the inner product of characters.
\begin{corollary}\label{cor:1/2} For all $\lambda\in \SP$ with $\ell(\lambda)$ odd
\[
\frac{1}{|\Sergeev_d|}\sum_{\rho\in\OP}\sum_{x\in C_\rho} |\chi^{\lambda}_{\pm}(x)|^2 \= \frac{1}{2} \= \frac{1}{|\Sergeev_d|}\sum_{x\in C_{\lambda,1}} |\chi^{\lambda}_{\pm}(x)|^2 \]
\end{corollary}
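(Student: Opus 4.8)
The plan is to mirror the proof of Corollary~\ref{cor:halfspin}. Since $\ell(\lambda)$ is odd, Proposition~\ref{prop:charSergeev} produces two \emph{distinct} irreducible characters $\chi^\lambda_+$ and $\chi^\lambda_-$: they coincide on every class $C_\rho$ with $\rho\in\OP$, where by part~(i) the common value $2^{-\delta(\lambda)}\theta^\lambda(\rho)$ is real (recall that $\theta^\lambda(\rho)$ is real for $\rho\in\OP$ by Corollary~\ref{cor:Xd} together with the Murnaghan--Nakayama rule), whereas on $C_{\lambda,1}$ part~(ii) gives the purely imaginary, \emph{opposite} values $\chi^\lambda_\pm=\pm 2^{(\ell(\lambda)-1)/2}\ii\sqrt{\prod\lambda_i}$; on all remaining classes both characters vanish. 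Throughout, $|\chi^\lambda_\pm|^2$ is constant on each pair $\{C,\varepsilon C\}$ because $\varepsilon$ acts by $-1$, so the two sums in the statement are understood to run over the full $\varepsilon$-closed supports, exactly as in Corollary~\ref{cor:halfspin}.

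First I would write down the two Schur orthogonality relations for these irreducible characters, namely $\langle\chi^\lambda_\pm,\chi^\lambda_\pm\rangle=1$ and $\langle\chi^\lambda_+,\chi^\lambda_-\rangle=0$. Denoting by $S_{\mathrm{i}}$ and $S_{\mathrm{ii}}$ the normalized sums of $|\chi^\lambda_\pm|^2$ over the classes of type $C_\rho$ and of type $C_{\lambda,1}$ respectively, the first relation reads $S_{\mathrm{i}}+S_{\mathrm{ii}}=1$, since the support is exhausted by these two families. For the second relation, on each $C_\rho$ the product $\chi^\lambda_+\overline{\chi^\lambda_-}=|\chi^\lambda_\pm|^2$ (real values), while on $C_{\lambda,1}$ it equals $-|\chi^\lambda_\pm|^2$ (the values are purely imaginary and opposite); hence $0=S_{\mathrm{i}}-S_{\mathrm{ii}}$. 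Adding and subtracting the two relations yields $S_{\mathrm{i}}=S_{\mathrm{ii}}=\tfrac12$, which is the claim.

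As a cross-check---and as the route closest to the proof of Corollary~\ref{cor:halfspin}---one can instead evaluate $S_{\mathrm{ii}}$ directly. On $C_{\lambda,1}$ one has $|\chi^\lambda_\pm|^2=2^{\ell(\lambda)-1}\prod\lambda_i$, and the number of elements $(\sigma,\xi)$ of this type is obtained by multiplying the $d!/\prod\lambda_i$ permutations $\sigma$ of cycle type $\lambda$ by the $2^{d-\ell(\lambda)}$ choices of Clifford labels $a_1,\dots,a_d$ having odd sum along each cycle of $\sigma$ (Sergeev's lemma, case~(2)), and by the factor $2$ coming from $a_0$, giving $d!\,2^{d-\ell(\lambda)+1}/\prod\lambda_i$. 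Dividing by $|\Sergeev_d|=d!\,2^{d+1}$ collapses to $S_{\mathrm{ii}}=\tfrac12$, and then $S_{\mathrm{i}}=\tfrac12$ follows from $\langle\chi^\lambda_\pm,\chi^\lambda_\pm\rangle=1$. The only genuine obstacle is the bookkeeping of the central element $\varepsilon$: one must check, via Sergeev's lemma, that $C_\rho\cap\varepsilon C_\rho=\emptyset$ and $C_{\lambda,1}\cap\varepsilon C_{\lambda,1}=\emptyset$, so that each $\varepsilon$-pair contributes twice to the full group sum, and confirm that the character values are indeed real on the odd classes and purely imaginary on $C_{\lambda,1}$ before comparing $\chi^\lambda_+$ with $\chi^\lambda_-$.
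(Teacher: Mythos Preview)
Your proposal is correct, and your second argument (the direct evaluation of $S_{\mathrm{ii}}$ via the class count $|C_{\lambda,1}\cup\varepsilon C_{\lambda,1}|=d!\,2^{d-\ell(\lambda)+1}/\prod\lambda_i$) is exactly the route the paper intends: the sentence preceding the corollary says it follows ``similar to the spin symmetric group,'' i.e., by mirroring the proof of Corollary~\ref{cor:halfspin}, and that is precisely your cross-check computation.

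Your first argument, pairing $\langle\chi^\lambda_\pm,\chi^\lambda_\pm\rangle=1$ with $\langle\chi^\lambda_+,\chi^\lambda_-\rangle=0$, is a pleasant alternative that bypasses the explicit enumeration of $C_{\lambda,1}$. It trades the class-size count for the observation that the character values are real on each $C_\rho$ and purely imaginary and opposite on $C_{\lambda,1}$, turning the two inner products into $S_{\mathrm{i}}+S_{\mathrm{ii}}$ and $S_{\mathrm{i}}-S_{\mathrm{ii}}$ respectively. This argument has the mild advantage of not depending on getting the power of $2$ in the class size right, at the cost of needing the reality/pure-imaginarity of the character values, which you correctly source from the Murnaghan--Nakayama description via Corollary~\ref{cor:Xd} and from Proposition~\ref{prop:charSergeev}(ii). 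Your remark about the $\varepsilon$-bookkeeping is also to the point: the statement is to be read over the full $\varepsilon$-closed support, as in Corollary~\ref{cor:halfspin}.
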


Write $\Sergeev_d^0$ for the subgroup of $\Sergeev_d$ consisting of elements of even degree. Similar to \cref{prop:charSergeev}, we obtain the character table. 
\begin{proposition}\label{prop:charSergeev0}
The irreducible spin representations of $\Sergeev_d^0$ are parametrized by pairs $(\lambda,(\pm 1)^{\ell(\lambda)+1})$ for strict partitions~$\lambda$. Moreover, the character values $\psi^\lambda_{\pm}(x)$ are determined recursively by
\begin{enumerate}
\item[\upshape (i)] $\psi^\lambda_{\pm}(x)=\frac{1}{2}\theta^\lambda(\rho)$ if $x\in C_\rho$ and $\rho\neq \lambda$.
\item[\upshape (ii)]\[\psi^{\lambda}_{\pm}(x) \= \frac{1}{2}\theta^\lambda(\rho)\pm \frac{1}{2} 2^{\ell(\lambda)/2} \mathrm{i}\sqrt{\prod \lambda_i}
\] when $\ell(\lambda)$ is even and $x\in C_{\lambda}.$
\item[\upshape (ii)] $\psi^\lambda_{\pm}(x)=0$ in all other cases.
\end{enumerate}
\end{proposition}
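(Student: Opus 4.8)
The plan is to derive the character table of $\Sergeev_d^0$ from that of $\Sergeev_d$ (\cref{prop:charSergeev}) by the standard Clifford theory of an index-two subgroup, in exact parallel with the passage from $\Spin_d$ to $\ASpin_d$ recorded in \cref{prop:charASpin} (with the role of the parity of $d-\ell(\lambda)$ now played by the parity of $\ell(\lambda)$). Write $\sgn\colon\Sergeev_d\to\{\pm1\}$ for the linear character $\sgn(x)=(-1)^{\deg x}$, whose kernel is exactly $\Sergeev_d^0$. I would first record the usual dichotomy for an irreducible spin character $\chi$ of $\Sergeev_d$: if $\chi\otimes\sgn\not\cong\chi$, then $\chi$ restricts irreducibly to $\Sergeev_d^0$ and $\chi$, $\chi\otimes\sgn$ have the same restriction; if $\chi\otimes\sgn\cong\chi$, then $\chi|_{\Sergeev_d^0}$ splits as a sum of two inequivalent irreducibles interchanged by conjugation with any odd-degree element.

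The key input from \cref{prop:charSergeev} is that $\chi^\lambda_+$ and $\chi^\lambda_-$ agree on every class $C_\rho$ (which lie in $\Sergeev_d^0$, so $\sgn=+1$ there) and differ only on the degree-one class $C_{\lambda,1}$, which exists precisely when $\ell(\lambda)$ is odd and on which $\sgn=-1$. Hence $\chi^\lambda_+\otimes\sgn$ and $\chi^\lambda_-$ have identical character: when $\ell(\lambda)$ is odd we get $\chi^\lambda_+\otimes\sgn\cong\chi^\lambda_-\not\cong\chi^\lambda_+$, so both restrict to one and the same irreducible $\psi^\lambda$ of $\Sergeev_d^0$, staying irreducible (the two signs in $\psi^\lambda_\pm$ then coincide, matching $(\pm1)^{\ell(\lambda)+1}=1$). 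When $\ell(\lambda)$ is even there is a single $\chi^\lambda$, supported on degree-zero classes alone, so $\chi^\lambda\otimes\sgn\cong\chi^\lambda$ and the restriction splits into $\psi^\lambda_\pm$. This reproduces the parametrization by $(\lambda,(\pm1)^{\ell(\lambda)+1})$, and part~(i) is then immediate: in the non-split case $\psi^\lambda(x)=\chi^\lambda_\pm(x)=2^{-\delta(\lambda)}\theta^\lambda(\rho)=\tfrac12\theta^\lambda(\rho)$ since $\delta(\lambda)=1$; in the split case $\psi^\lambda_\pm(x)=\tfrac12\chi^\lambda(x)=\tfrac12\theta^\lambda(\rho)$ on every $C_\rho$ that does not split inside $\Sergeev_d^0$, again because $\delta(\lambda)=0$.

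The genuinely delicate point, which I expect to be the main obstacle, is part~(ii): the value of $\psi^\lambda_\pm$ on the unique class $C_\lambda$ (necessarily with $\lambda\in\OP$, so that it is a degree-zero spin class) that splits inside $\Sergeev_d^0$ when $\ell(\lambda)$ is even. Abstract Clifford theory determines only the sum $\psi^\lambda_++\psi^\lambda_-=\chi^\lambda|_{\Sergeev_d^0}=\theta^\lambda$ and that the two values differ there by a purely imaginary quantity (since $\theta^\lambda$ is real on odd classes); it does not by itself produce the factor $2^{\ell(\lambda)/2}\mathrm{i}\sqrt{\prod\lambda_i}$. This is exactly the loss of information on the class of cycle type $\lambda$ flagged in the discussion around \cref{cor:Xd}. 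I see two ways to supply the missing term. The first is to repeat Nazarov's explicit model used in the proof of \cref{prop:charSergeev}, now restricted to degree-zero elements: the character value is again read off as the multiplicity of the identity in a product of Clifford generators, which on $C_\lambda$ gives $\pm\tfrac12\,2^{\ell(\lambda)/2}\mathrm{i}\sqrt{\prod\lambda_i}$. The second, more economical route is to fix the magnitude of the extra term $y$ via the orthonormality $\|\psi^\lambda_\pm\|^2=1$: splitting this norm over the classes $C_\rho$ with $\rho\neq\lambda$ and the two halves of $C_\lambda$, and using the size of $C_\lambda$ together with the value of $\theta^\lambda$ from \cref{cor:Xd}, forces $|y|=\tfrac12\,2^{\ell(\lambda)/2}\sqrt{\prod\lambda_i}$; this is precisely the analogue for the even subgroup of the ``$\tfrac12=\tfrac12$'' splitting in \cref{cor:1/2}. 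The sign attached to the two halves is then a labeling convention, fixing the $\pm$, and purity of the term as $\mathrm{i}$ times a real number follows from $\theta^\lambda$ being real on the classes $C_\rho$.
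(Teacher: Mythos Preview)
Your Clifford-theory strategy for the index-two inclusion $\Sergeev_d^0\subset\Sergeev_d$ is exactly what the paper has in mind; the paper gives no proof beyond ``Similar to \cref{prop:charSergeev}'', and your dichotomy ($\chi^\lambda_\pm\otimes\sgn\cong\chi^\lambda_\mp$ when $\ell(\lambda)$ is odd, $\chi^\lambda\otimes\sgn\cong\chi^\lambda$ when $\ell(\lambda)$ is even) together with the two routes you propose for the extra term (Nazarov's explicit model, or orthonormality) is the right outline.

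There is one genuine gap. Your parenthetical ``necessarily with $\lambda\in\OP$'' for the class $C_\lambda$ in part~(ii) is wrong. The class on which $\psi^\lambda_+$ and $\psi^\lambda_-$ differ is \emph{not} in general the pure-permutation class $C_\rho$ with $\rho=\lambda$; that class is only defined for $\lambda\in\OP$, yet part~(ii) must apply to every strict $\lambda$ with $\ell(\lambda)$ even. Look at the appendix for $d=5$: for $\lambda=(4,1)$ and $\lambda=(3,2)$ (both with $\ell(\lambda)=2$, neither in $\OP$) the characters $\psi^\lambda_\pm$ differ on the classes labelled $C_{(1234)}$ and $C_{(123)(45)}$, which are \emph{not} among the $C_\rho$ with $\rho\in\OP$. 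These are classes of elements $(\sigma,\xi)$ with $\sigma$ of cycle type $\lambda$ and $\sum_{j\in\tau}a_j$ odd on each cycle $\tau$---the even-degree analogue of $C_{\lambda,1}$. In $\Sergeev_d$ such a class satisfies $C=\varepsilon C$ (so spin characters vanish on it), but in $\Sergeev_d^0$ it splits into a spin-relevant pair. Your orthonormality argument then needs the size of \emph{this} class, not of a pure-permutation class; and your claim that ``purity of the term as $\mathrm{i}$ times a real number follows from $\theta^\lambda$ being real on $C_\rho$'' is insufficient, since on the new class $\theta^\lambda$ vanishes and the reality/imaginarity of the difference has to come from the explicit Nazarov model (as it did in \cref{prop:charSergeev}). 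Once you identify the correct class, both of your proposed methods work.
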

\begin{corollary}\label{cor:half0}  For all $\lambda\in \SP(d)$ with $\ell(\lambda)$ even
\[ \frac{1}{|\Sergeev_d^0|}\sum_{x\in \Sergeev_d^0} (\Re\psi^{\lambda}_{\pm}(x))^2 \= \frac{1}{2} \= \frac{1}{|\Sergeev_d^0|}\sum_{x\in \Sergeev_d^0} (\Im\psi^{\lambda}_{\pm}(x))^2.\]
\end{corollary}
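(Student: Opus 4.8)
\textit{Approach.} The plan is to exploit that, for the $\lambda$ at issue, the two characters $\psi^\lambda_+$ and $\psi^\lambda_-$ are \emph{distinct irreducible} characters of $\Sergeev_d^0$ which are \emph{complex conjugates} of one another, and then read off both desired sums directly from the orthonormality relations, in exact analogy with \cref{cor:1/2} and its $\ASpin_d$ counterpart transported to the even subgroup.

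\textit{Step 1: the two characters are conjugate.} I would first record from \cref{prop:charSergeev0} that $\psi^\lambda_+$ and $\psi^\lambda_-$ take identical values except on the class $C_\lambda$: in case (i) both equal $\tfrac12\theta^\lambda(\rho)$, off $C_\lambda$ they otherwise vanish, and only on $C_\lambda$ do they differ, namely by the purely imaginary term $\pm\tfrac12\,2^{\ell(\lambda)/2}\ii\sqrt{\prod\lambda_i}$. The one input needed here is that $\theta^\lambda(\rho)$ is \emph{real} for every $\rho\in\OP$; this follows from \cref{cor:Xd}, since $\theta^\lambda(\rho)$ is a power of two times $\varphi^\lambda(\rho)$, and the latter is a value of the Murnaghan--Nakayama rule on an odd class, hence real (as already invoked for \cref{prop:charSpin}). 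Consequently $\Re\psi^\lambda_+=\Re\psi^\lambda_-$ and $\Im\psi^\lambda_+=-\Im\psi^\lambda_-$ pointwise, i.e.\ $\psi^\lambda_-=\overline{\psi^\lambda_+}$ as class functions. In particular $(\Re\psi^\lambda_+)^2=(\Re\psi^\lambda_-)^2$ and $(\Im\psi^\lambda_+)^2=(\Im\psi^\lambda_-)^2$, so it suffices to treat the $+$ sign.

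\textit{Step 2: orthonormality.} Since $\ell(\lambda)$ is even and the imaginary term is nonzero, the two characters differ precisely when the special class is nonempty, which forces $\lambda\in\OP$; in that case $\psi^\lambda_+$ and $\psi^\lambda_-$ are genuinely distinct irreducibles, so with $G=\Sergeev_d^0$,
\[ \langle\psi^\lambda_+,\psi^\lambda_+\rangle \= 1, \qquad \langle\psi^\lambda_+,\psi^\lambda_-\rangle \= 0. \]
Using $\psi^\lambda_-=\overline{\psi^\lambda_+}$, the first relation reads $\tfrac1{|G|}\sum_x\bigl((\Re\psi^\lambda_+)^2+(\Im\psi^\lambda_+)^2\bigr)=1$, while the second becomes $\tfrac1{|G|}\sum_x\psi^\lambda_+(x)^2=0$, whose real part gives $\tfrac1{|G|}\sum_x\bigl((\Re\psi^\lambda_+)^2-(\Im\psi^\lambda_+)^2\bigr)=0$. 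Adding and subtracting yields $\tfrac1{|G|}\sum_x(\Re\psi^\lambda_+)^2=\tfrac1{|G|}\sum_x(\Im\psi^\lambda_+)^2=\tfrac12$, which is the assertion.

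\textit{Main obstacle and a cross-check.} I expect the only genuine difficulty to be bookkeeping rather than mathematics: one must read the statement for exactly those $\lambda$ (namely $\lambda\in\OP\cap\SP$ with $\ell(\lambda)$ even) for which $\psi^\lambda_\pm$ are two distinct representations, since otherwise the imaginary part vanishes identically and the right-hand equality would read $0\neq\tfrac12$. As an independent check, and as an alternative to the conjugation trick, I could instead compute $\|\psi^\lambda_+-\psi^\lambda_-\|^2$: orthonormality forces the value $2$, whereas \cref{prop:charSergeev0} evaluates it as $\tfrac1{|G|}|C_\lambda|\,2^{\ell(\lambda)}\prod\lambda_i$, so that $\tfrac1{|G|}\sum_x(\Im\psi^\lambda_+)^2=\tfrac14\|\psi^\lambda_+-\psi^\lambda_-\|^2=\tfrac12$. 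This mirrors the conjugacy-class-size computation in the proof of \cref{cor:halfspin} and, if desired, simultaneously recovers the size of $C_\lambda$ inside $\Sergeev_d^0$.
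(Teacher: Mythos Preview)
Your proof is correct. The paper leaves this corollary unproved, but by analogy with \cref{cor:halfspin} the intended argument is the one you list as your cross-check: the two sides sum to~$1$ by irreducibility of $\psi^\lambda_\pm$, and the $\Im$-side is read off directly from the size of the class $C_\lambda$ in $\Sergeev_d^0$. Your primary route via $\psi^\lambda_-=\overline{\psi^\lambda_+}$ and the orthogonality $\langle\psi^\lambda_+,\psi^\lambda_-\rangle=0$ is a clean alternative that bypasses the class-size computation entirely.

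One correction to your ``main obstacle'' paragraph: you do \emph{not} need $\lambda\in\OP$. The point is that conjugation in $\Sergeev_d^0$ is coarser than in $\Sergeev_d$, so the class $C_\lambda$ of pure permutations of type $\lambda$ can satisfy $C_\lambda\cap\varepsilon C_\lambda=\emptyset$ inside $\Sergeev_d^0$ even when $\lambda\notin\OP$ (compare the $d=5$ tables in the appendix, where $\lambda=(4,1)$ and $\lambda=(3,2)$ both appear). Case~(ii) of \cref{prop:charSergeev0} therefore applies for every strict $\lambda$ with $\ell(\lambda)$ even, the imaginary part never vanishes identically, and your Steps~1 and~2 go through verbatim (with $\theta^\lambda(\lambda)=0$ when $\lambda\notin\OP$, which is still real).
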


Finally, we end our discussion by introducing the central characters associated to the Sergeev group. 
Given $\rho\in \OP(d)$, the class sum $\overline{C_\rho}$, which is the sum of all permutations in $C_\rho$, acts by Schur's lemma as a constant on a supermodule~$V_\lambda\otimes \mathcal{C}_d$. We call this constant \emph{the central character} and denote it by~$\mathbf{f}_\rho(\lambda)$. In fact,
\begin{align}\label{eq:centralchar} \mathbf{f}_\rho(\lambda) \= |C_\rho|\frac{\theta^\lambda(\rho)}{\dim\theta^\lambda} \= |C_\rho|\frac{\chi^\lambda_{\pm}(\rho)}{\dim\chi^\lambda_{\pm}} \= |C_\rho|\frac{\psi^\lambda(\rho)}{\dim\psi^\lambda},
\end{align}
where $\psi^\lambda$ is the average of $\psi^\lambda_+$ and $\psi^\lambda_-$ (or equal to $\psi^\lambda_+$ if $\ell(\lambda)$ is odd). In particular, the central characters associated to $\Sergeev_d$ and $\Sergeev_d^0$ agree. (Note that we restricted ourselves to $\rho\in\OP$ in the definition of central characters.)

We extend this notation to all $\rho\in\OP$ and $\lambda \in \SP$, even if $\rho$ and $\lambda$ are not of the same size. Write $\rho\sim \rho'$ if the partitions $\rho$ and $\rho'$ only differ in the number of parts equal to~$1$, and set
 \begin{align}\label{eq:cc}
  \mathbf{f}_\rho(\lambda) \= \begin{cases} 
 \mathbf{f}_{\rho'}(\lambda) & |\rho|<|\lambda|, \qquad  \\
 0 & |\rho|>|\lambda|,
 \end{cases}
 \end{align}
where $\rho\sim\rho'$ and $|\rho'|=|\lambda|$. Then, by \cite{Iva} we have $\mathbf{f}_\rho\in \Lambda$, where $\Lambda$ is the symmetric algebra introduced before. More concretely, if $\rho=(\ell)$, then by \cite[Theorem~6.7]{CheMoeSauZag} 
\[\ell\,\mathbf{f}_\ell(\lambda) = \frac{-1}{2\ell} [t^{\ell+1}]\biggl(\prod_{j=1}^{\ell-1}(1-jt) \cdot \exp\Bigl(\sum_{\substack{k\geq 1 \\ k\, {\rm odd}}} \frac{2}{k}p_k(\lambda)\, t^k(1-(1-\ell t)^{-k})\Bigr)\biggr).\]
(The central characters in our work agree with those in \cite{EskOkoPan}, but differ by a factor~$\ell$ of those in \cite[Theorem~6.7]{CheMoeSauZag} and in \cite[Definition~6.3]{Iva}.)
By comparing this formula with the definition of $\mathbf{h}_\ell$, we see that $\mathbf{f}_\ell- \mathbf{h}_\ell/\ell$ is of weight less than $\ell+1$. 

\subsection{Weighted spin Hurwitz numbers}\label{sec:spinHurwitz}
Let $\Pi=(\mu^{(1)},\ldots,\mu^{(n)})$ with $\mu^{(i)}\in \OP(d)$. A \emph{Hurwitz tuple of degree~$d$ and ramification type~$\Pi$} is an element 
\[(\alpha,\beta,\gamma_1,\ldots,\gamma_n)\in (\Sym_d)^{n+2}\]
 such that $[\alpha,\beta]\gamma_1\ldots\gamma_n = 1$ and the type of~$\gamma_i$ is equivalent to the partition~$\mu^{(i)}$ (i.e., they only differ in the amount of $1$'s).

Write $\Hur_d(\Pi)$ for the set of all Hurwitz tuples $h$ of degree~$d$ and ramification type~$\Pi$. Every such Hurwitz tuple corresponds to a (ramified) covering of the torus, which, after pulling back the flat metric on the torus, yields a differential of type~$\mu$. This induces a spin parity $s:\Hur(\Pi)\to \ZZ/2\ZZ$. By \cite[Theorem~2]{EskOkoPan} the \emph{spin Hurwitz number} of degree~$d$ and ramification profile~$\Pi$ is given by
\[\frac{1}{d!}\sum_{h\in \Hur_d(\Pi)} (-1)^{s(h)}\= 2^{\chi/2} \sum_{\lambda \in \SP(d)} (-1)^{\ell(\lambda)}\, \mathbf{f}_{\Pi}(\lambda), \]
where $\mathbf{f}_\Pi=\prod_{i} \mathbf{f}_{\mu^{(i)}}$ with $\mathbf{f}_{\mu^{(i)}}$ a central character, defined by~\eqref{eq:cc}, and $\chi$ is the Euler characteristic of the cover, i.e., 
\[ \chi \= \sum_{i} \bigl(\ell(\mu^{(i)})-|\mu^{(i)}|\bigr).\]
We generalize this result, by finding the following expression for a weighted count. Given a Hurwitz tuple $h=(\alpha,\beta,\gamma_1,\ldots,\gamma_n)$ and $f\in\Lambda$, write $f(\alpha)$ to denote the value of $f$ applied to the partition corresponding to the conjugacy class of~$\alpha$.
\begin{proposition} For any $f\in \Lambda$ we have
\[\frac{1}{d!}\sum_{h\in \Hur_d(\Pi)} (-1)^{s(h)} f(\alpha) \= 2^{\chi/2} \sum_{\lambda \in \SP(d)} (-1)^{\ell(\lambda)}\, \mathbf{f}_{\Pi}(\lambda)\,  f(\lambda). \]
\end{proposition}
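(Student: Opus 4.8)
The plan is to derive the weighted identity from the unweighted one---\cite[Theorem~2]{EskOkoPan}, the case $f=1$ recalled just above---by carrying the factor $f(\alpha)$ through its proof and showing that it multiplies the $\lambda$-th summand by the scalar $f(\lambda)$. As both sides are linear in $f$ and the central characters $\{\mathbf{f}_\sigma\}_{\sigma\in\OP}$ form a weight-filtered basis of $\Lambda$ by \cite{Iva}, I would first reduce to $f=\mathbf{f}_\sigma$. For this choice the right-hand side collapses: since $\mathbf{f}_\sigma(\lambda)$ and the $\mathbf{f}_{\mu^{(i)}}(\lambda)$ are the eigenvalues of the (central) class sums acting on $V_\lambda\otimes\mathcal{C}_d$, their product is $\mathbf{f}_{\Pi'}(\lambda)$ for the enlarged profile $\Pi'=(\mu^{(1)},\dots,\mu^{(n)},\sigma)$, so that $2^{\chi/2}\sum_\lambda(-1)^{\ell(\lambda)}\mathbf{f}_\Pi(\lambda)\mathbf{f}_\sigma(\lambda)=2^{(|\sigma|-\ell(\sigma))/2}$ times the spin Hurwitz number of $\Pi'$, the power of two recording the shift $\chi\mapsto\chi+\ell(\sigma)-|\sigma|$ of the Euler characteristic. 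Thus for $f=\mathbf{f}_\sigma$ the claim becomes: weighting the handle monodromy by $\mathbf{f}_\sigma(\alpha)$ has the same effect as adjoining a branch point of odd type $\sigma$, up to the factor $2^{(|\sigma|-\ell(\sigma))/2}$.

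To prove this I would revisit the character-theoretic evaluation behind \cite[Theorem~2]{EskOkoPan}. Writing $(-1)^{s(h)}=\omega([\tilde\alpha,\tilde\beta]\,\tilde\gamma_1\cdots\tilde\gamma_n)$ with lifts to the Sergeev group (canonical for the odd-type $\gamma_i$) and $\omega$ recording the resulting central sign, the signed count is computed in the twisted group algebra through the characters of \cref{prop:charSergeev}. The class sums of the $\gamma_i$ are central and produce the factors $\mathbf{f}_{\mu^{(i)}}(\lambda)$, while the genus-one handle $\sum_{\alpha,\beta}[\tilde\alpha,\tilde\beta]$ yields the sum over $\lambda\in\SP(d)$ together with the prefactor $2^{\chi/2}(-1)^{\ell(\lambda)}$. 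The crucial point---the reason the \emph{parts} of $\lambda$ appear rather than an average over cycle types---is that the spin sign forces the handle, on the block of $V_\lambda$, to localize onto the single conjugacy class whose cycle type is exactly $\lambda$: the distinguished (split) class of case~(ii) in \cref{prop:charSergeev} (respectively \cref{prop:charSergeev0} when $\ell(\lambda)$ is even), on which the character acquires the imaginary value $\pm 2^{(\ell(\lambda)-1)/2}\mathrm{i}\sqrt{\prod\lambda_i}$. Formally I expect this to be the passage from the ordinary trace to the odd (queer) trace on the Type~Q blocks.

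Granting this localization the conclusion is immediate. Because $f$ is a class function it leaves the central $\gamma_i$-contributions untouched and only reweights the handle; and since the handle sees only the class of cycle type $\lambda$, there $f(\alpha)=f(\lambda)$. The weight $\tfrac12$ carried by this class---recorded in \cref{cor:1/2}, and in \cref{cor:half0} for $\ell(\lambda)$ even, and already responsible for the $f=1$ formula---fixes the normalization, so the weighted handle equals $f(\lambda)$ times the unweighted one and summation over $\lambda$ gives $2^{\chi/2}\sum_\lambda(-1)^{\ell(\lambda)}\mathbf{f}_\Pi(\lambda)\,f(\lambda)$. This dovetails with the first paragraph: for $f=\mathbf{f}_\sigma$ the value $f(\lambda)=\mathbf{f}_\sigma(\lambda)$ is exactly the eigenvalue inserted by a branch point of type $\sigma$.

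The main obstacle is the localization of the second paragraph. One must show rigorously that the spin sign collapses the naive, spin-insensitive handle $\tfrac1{d!}\sum_\alpha f(\alpha)\,|\chi^\lambda_\pm(\alpha)|^2$---which is genuinely \emph{not} proportional to $f(\lambda)$---onto the sole contribution of the class of cycle type $\lambda$, all other cycle types being suppressed. This is precisely where the full character table of the Sergeev group, and not merely its irreducible super\-representations, is indispensable: the case-(ii) values $\mathrm{i}\sqrt{\prod\lambda_i}$ are what encode the parts of $\lambda$, and it is their appearance through the spin structure, rather than through the ordinary odd conjugacy classes, that forces the evaluation $f(\lambda)$. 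I would pin down the powers of two and the signs on the example $\Pi=((3))$ in degree $d=3$, where \cite[Theorem~2]{EskOkoPan} gives $-3$, hence all eighteen Hurwitz tuples have odd parity, and where for $f=p_3$ both sides equal $-45$.
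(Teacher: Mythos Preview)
Your overall direction---carry the weight $f(\alpha)$ through the character-theoretic proof of the unweighted formula and use the full character table of the Sergeev group rather than only the superrepresentations---is exactly right, and you have correctly isolated the crux. But the proposal stops short of the decisive computation, and the picture you sketch for it (``the spin sign forces the handle to localize onto the single conjugacy class of cycle type $\lambda$'') is not quite the mechanism that makes this work.

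Two remarks. First, the reduction to $f=\mathbf{f}_\sigma$ in your first paragraph is correct but idle: the reformulation ``weighting the handle by $\mathbf{f}_\sigma(\alpha)$ equals adjoining a branch point of type~$\sigma$ up to a power of~$2$'' is logically equivalent to the statement you are trying to prove, and there is no independent direct argument for it. Second, and more importantly, the contributions from the ordinary odd classes $C_\rho$ are \emph{not} suppressed on a fixed irreducible block; they produce a genuine ``M\"oller transform'' term $\Moller(f)(\lambda)=\sum_{\rho\in\OP(d)} 2^{-\ell(\rho)} z_\rho^{-1} |\chi^\lambda(\rho)|^2 f(\rho)$ in $M_G(f,\pi_\lambda)$, and this term has no reason to equal a multiple of $f(\lambda)$. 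What actually happens in the paper is a cancellation coming from the four-group inclusion--exclusion of \cite[Proposition~5]{EskOkoPan}: one writes $(-1)^{s(h)}/d!$ as a signed sum over $\SSym_d,\SSym_d^0,\Sergeev_d,\Sergeev_d^0$, passes to spin irreducibles by subtracting $\SSym$ from $\Sergeev$, and then computes $M_{\Sergeev_d}(f,\pi_\lambda^{(\pm)})$ and $M_{\Sergeev_d^0}(f,\pi_\lambda^{(\pm)})$ explicitly via \cref{prop:charSergeev} and \cref{prop:charSergeev0}. In each case one finds either $\Moller(f)(\lambda)$ or $\tfrac12 f(\lambda)+\tfrac12\Moller(f)(\lambda)$, with the split depending on the parity of $\ell(\lambda)$ and on which of the two groups one is in; the point is that after taking the signed combination over $\Sergeev_d^0$ and $\Sergeev_d$ (each $\lambda$ appearing with the correct multiplicity from its one or two irreducibles) the M\"oller terms cancel identically, leaving $(-1)^{\ell(\lambda)} f(\lambda)$. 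So the phenomenon is not a localization on a single block but a cancellation between the contributions of two groups; \cref{cor:1/2} and \cref{cor:half0} enter precisely to pin down the coefficient~$\tfrac12$ in front of $f(\lambda)$ in the split cases. Your last paragraph already senses this (``the naive handle is genuinely not proportional to $f(\lambda)$''), and the missing step is to write down the four-group formula, compute the four $M_G$'s, and observe the cancellation.
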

\begin{proof}
Given conjugacy classes $C_1,\ldots,C_n$ in a finite group $G$, we define \emph{a Hurwitz tuple for~$G$ with ramification type $\mathcal{C}=(C_1,\ldots,C_n)$} to be an element
\[ (\alpha,\beta,\gamma_1,\ldots,\gamma_n) \in G^{n+2}\]
such that $[\alpha,\beta]\gamma_1\cdots\gamma_n =1$ and $\gamma_i\in C_i$ for all $i$. Denote by $\Hur_G(\mathcal{C})$ the set of all Hurwitz tuples for~$G$ with ramification type $\mathcal{C}$. The sum of all elements of a conjugacy class $C$ in the group algebra of $G$ acts by Schur's lemma by a constant; this constant is the central character $f_C$, which we consider as a function $f_C:G^{\wedge}\to\CC$, where $G^{\wedge}$ denotes the set of irreducible representations of $G$. We write $f_{\mathcal{C}}=\prod_{C\in \mathcal{C}} f_C$. Given an irreducible representation $\pi$ of $G$ and a class function $f:G\to \CC$, we let
\[ M_G(f,\pi) \defis \frac{1}{|G|}\sum_{g\in G} \chi_{\pi}(g)\,\overline{\chi_{\pi}(g)}\,f(g)
\= \frac{1}{|G|}\sum_{[g]} |[g]|\, \chi_{\pi}(g)\,\overline{\chi_{\pi}(g)}\,f(g), \]
where the last sum is over all conjugacy classes $[g]$ of $G$, and with $\chi_\pi$ the character of the representation~$\pi$ and $|[g]|$ the size of the conjugacy class of $g$ in $G$.

Mutatis mutandis, the proof of \cite[Theorem~A.1.10]{LanZvo} implies that for all class functions $f:G\to \CC$, we have that
\[ H_{f,\mathcal{C}}(G)\defis \frac{1}{|G|}\sum_{h\in \Hur_G(\mathcal{C})}\!\! f(\alpha) \= \sum_{\pi\in G^{\wedge}} f_{\mathcal{C}}(\pi) \, M_G(f,\pi),
\]
where $h=(\alpha,\beta,\gamma_1,\ldots,\gamma_n)$. 
In the non-spin setting this result for $G=\Sym_d$ suffices to prove the non-spin variant of this proposition (see \cite[Proposition~6.3]{CheMoeZag}). Here, it does not suffice to let $G=\Sergeev_d$, as we have to take care of the sign of the Hurwitz tuple. Hence, we complete the proof along the same lines as  \cite[\S\S3.1.6--3.1.10]{EskOkoPan}.

Given a group homomorphism $\phi: G'\to G, h\in \Hur_{G}(\mathcal{C})$ and a conjugacy class $\mathcal{C}'$ of $G'$ such that $\phi(\mathcal{C}')=\mathcal{C}$, we write
$\Hur_{G'}(h,\phi)=\Hur_{G'}(h)$ for the set of Hurwitz tuples $h'\in \Hur_{G'}(\mathcal{C}')$ such that $\phi(h') = h$. For all $d\geq 1$, write $\SSym_d$ for the group of signed permutations, obtained by setting $\varepsilon=1$ in $\Sergeev_d\mspace{1mu}$. Moreover, write $\SSym_d^0$ and $\Sergeev_d^0$ for the subgroups of even elements with respect to the $\ZZ/2\ZZ$-grading on~$\Sergeev_d$. Note that all these four groups $\SSym_d,  \SSym_d^0, \Sergeev_d^0$ and $\Sergeev_d^0$ admit a natural projection homomorphism to~$\Sym_d$. 

Write $\Hur_{d}(\mathcal{C})$ for $\Hur_{\Sym_d}(\mathcal{C})$ and let $h\in\Hur_{d}(\mathcal{C})$ be given. Now, in terms of Hurwitz tuples, \cite[Proposition~5]{EskOkoPan} reads
\[ \frac{(-1)^{s(h)}}{d!} = 2^{\chi/2} \biggl(\frac{|\Hur_{\SSym_d}(h)|}{|\SSym_d|}- \frac{|\Hur_{\SSym^0_d}(h)|}{|\SSym^0_d|}- \frac{|\Hur_{\Sergeev_d}(h)|}{|\Sergeev_d|}+ \frac{|\Hur_{\Sergeev^0_d}(h)|}{|\Sergeev^0_d|}\biggr).\]
Hence, summing over all $h\in \Hur_d(\mathcal{C})$ and multiplying by $f(\alpha)$, we obtain
\[ \sum_{h\in \Hur_d(\mathcal{C})}\!\!\! \frac{(-1)^{s(h)} }{d!} f(\alpha) \= 2^{\chi/2} \bigl(
H_{\! f,\mathcal{C}}({\SSym_d})-H_{\! f,\mathcal{C}}({\SSym^0_d})-H_{\! f,\mathcal{C}}({\Sergeev_d})+H_{\! f,\mathcal{C}}({\Sergeev^0_d})\bigr),
\]
where in $H_{\! f,\mathcal{C}}(G)$ the conjugacy class~$\mathcal{C}$ should be interpreted as the conjugacy class of pure permutations in $G$. 

Observe that the irreducible representations $\pi$ of $\Sergeev_d$ such that $\pi(\varepsilon)=1$ correspond to the representations of~$\SSym_d$. In particular, $M_{\SSym_d}(f,\pi)=M_{\Sergeev_d}(f,\pi)$ for such~$\pi$. Hence,
\[ H_{f,\mathcal{C}}({\Sergeev_d})-H_{f,\mathcal{C}}({\SSym_d})\=\sum_{\pi\in (\Sergeev_d)^{\wedge}_{-}} f_\mathcal{C}(\pi) \, M_{\Sergeev_d}(f,\pi),\]
where $(\Sergeev_d)^{\wedge}_{-}$ denotes the set of irreducible spin representations of~$\Sergeev_d$. Similarly, the results holds after replacing $\Sergeev_d$ and $\SSym_d$ with $\Sergeev^0_d$ and $\SSym^0_d$, so that
\[ \sum_{h\in \Hur_d(\mathcal{C})}\!\!\! \frac{(-1)^{s(h)} }{d!} f(\alpha) \= \!\sum_{\pi\in (\Sergeev^0_d)^{\wedge}_{-}}\! \! f_\mathcal{C}(\pi) \, M_{\Sergeev^0_d}(f,\pi) \meno \!\sum_{\pi\in (\Sergeev_d)^{\wedge}_{-}}\! \! f_\mathcal{C}(\pi) \, M_{\Sergeev_d}(f,\pi) .\]

At this point, we can no longer follow \cite{EskOkoPan} closely. Instead, we compute $M_{\Sergeev_d}(f,\pi)$ using \cref{prop:charSergeev}. First assume that $\pi=\pi_\lambda$ is associated to $\lambda\in \SP$ and $\ell(\lambda)$ is odd. Then, $\pi_{\lambda}$ only takes values on the conjugacy classes $C_\rho$ and $\varepsilon C_\rho$. 
Both conjugacy classes consist of $2^{d-\ell(\rho)}\frac{d!}{z_\rho}$ elements. Hence, we obtain
\begin{align*}
 M_{\Sergeev_d}(f,\pi_\lambda) 
 &\= \frac{1}{2^{d+1}d!}\sum_{\rho \in \OP(d)} 2\cdot 2^{d-\ell(\rho)}\frac{d!}{z_\rho} |\chi_\lambda(\rho)|^2 f(\rho) \\
  &\= \sum_{\rho \in \OP(d)} \frac{2^{-\ell(\rho)}}{z_\rho} |\chi_\lambda(\rho)|^2 f(\rho) =:\Moller(f)(\lambda),
 \end{align*}
where $\Moller(f)$ can be thought of as the spin analogue of the M\"oller transform in \cite[Corollary 13.2]{CheMoeZag}. 
Next, let $\pi_{\lambda}^\pm$ be associated to $\lambda \in \SP$ with $\ell(\lambda)$ odd. 
By \cref{cor:1/2}, we find that
\begin{align*}
 M_{\Sergeev_d}(f,\pi_\lambda^{\pm}) 
  &\= \frac{1}{2}f(\lambda)+\sum_{\rho \in \OP(d)} \frac{2^{-\ell(\rho)}}{z_\rho} |{\chi}_\lambda^{\pm}(\rho)|^2 f(\rho)
   =: \frac{1}{2}f(\lambda)+\frac{1}{2}\Moller(f)(\lambda),
 \end{align*}
where we now also defined the M\"oller transform if $\ell(\lambda)$ is odd. 

Similarly, we compute $M_{\Sergeev^0_d}(f,\pi)$ for all $\pi \in (\Sergeev^0_d)^\wedge$ using \cref{prop:charSergeev0}. First, suppose $\pi=\pi_\lambda$ and $\ell(\lambda)$ is odd. As $\psi^\lambda_+=\chi^\lambda_+$, we find
\begin{align*}
 M_{\Sergeev^0_d}(f,\pi_\lambda) 
 &\= \frac{1}{2^{d}d!}\sum_{\rho \in \OP(d)} 2\cdot 2^{d-\ell(\rho)}\frac{d!}{z_\rho} |\chi^\lambda_{+}(\rho)|^2 f(\rho)
\= \Moller(f)(\lambda).
 \end{align*}
Finally, let $\pi_{\lambda}^{\pm} \in (\Sergeev^0_d)^\wedge$ with $\ell(\lambda)$ even. By \cref{cor:half0}, we obtain
 \begin{align*}
 M_{\Sergeev^0_d}(f,\pi_\lambda^{\pm}) 
  &\= \frac{1}{2}f(\lambda)+2\sum_{\rho \in \OP(d)} \frac{2^{-\ell(\rho)}}{z_\rho} \Bigl|\frac{1}{2}{\chi}_\lambda(\rho)\Bigr|^2 f(\rho)\\
   &\= \frac{1}{2}f(\lambda)+\frac{1}{2}\Moller(f)(\lambda).
 \end{align*}
 
Recall that the central characters corresponding to all representations of~$\Sergeev_d$ and~$\Sergeev^0_d$ 
agree on odd partitions. Hence, we conclude
\begin{align*}  \sum_{h\in \Hur_d(\mathcal{C})}\!\! &\frac{(-1)^{s(h)} }{d!} f(\alpha) \\
\= 
& 2^{\chi/2} \sum_{\lambda \in \SP(d)} \mathbf{f}_{\Pi}(\lambda)\, 
(-1)^{\ell(\lambda)} \bigl(M(f)(\lambda)-2\bigl(\tfrac{1}{2}f(\lambda)+\tfrac{1}{2}M(f)(\lambda)\bigr)\bigr) \\
\= & 2^{\chi/2} \sum_{\lambda \in \SP(d)} (-1)^{\ell(\lambda)}\, \mathbf{f}_{\Pi}(\lambda)\, f(\lambda).
 &\qedhere
\end{align*}
\end{proof}

Define the \emph{(combinatorial) $\ell$-weighted spin Siegel--Veech constant $c_{\ell}(d,\Pi)$} to be 
\[ c_{\ell}(d,\Pi) \defis \frac{1}{n\,d!}\sum_{h\in \Hur_d(\Pi)} (-1)^{s(h)}\,p_\ell(h),\]
where $p_\ell(h)$ is the \emph{$\ell$-th Siegel--Veech weight} of the Hurwitz tuple $h=(\alpha,\beta,\gamma_1,\ldots,\gamma_n)$, which by \cite[Lemma~6.1]{CheMoeZag} is given by
$p_\ell(h) \= n \, p_\ell(\alpha).$

\begin{corollary}\label{cor:weightedc0} For all odd $\ell$ 
\[ c_{\ell}(d,\Pi) \= 2^{\chi/2} \sum_{\lambda \in \SP(d)} (-1)^{\ell(\lambda)} \mathbf{f}_\Pi(\lambda) \, p_\ell(\lambda).
\]
\end{corollary}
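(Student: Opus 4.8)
The plan is to obtain the statement as an immediate specialization of the preceding Proposition, applied to the test function $f = p_\ell$. First I would check that $p_\ell$ is a legitimate input, i.e.\ that $p_\ell \in \Lambda$: since $\Lambda = \CC[\mathbf{p}_1,\mathbf{p}_3,\ldots]$ is generated by the \emph{odd}-indexed shifted power sums, and $p_\ell = \mathbf{p}_\ell + \tfrac{1}{2}\zeta(-\ell)$ differs from the generator $\mathbf{p}_\ell$ only by an additive constant, we have $p_\ell\in\Lambda$ precisely because $\ell$ is odd. This is exactly where the parity hypothesis on $\ell$ enters.

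Next I would unwind the definition of $c_\ell(d,\Pi)$. By the cited formula $p_\ell(h) = n\, p_\ell(\alpha)$ for a Hurwitz tuple $h = (\alpha,\beta,\gamma_1,\ldots,\gamma_n)$, the factor $n$ cancels against the normalization $\tfrac{1}{n\,d!}$, leaving
\[ c_\ell(d,\Pi) \= \frac{1}{d!}\sum_{h\in\Hur_d(\Pi)} (-1)^{s(h)}\, p_\ell(\alpha). \]
The summand now has exactly the shape $f(\alpha)$ treated in the Proposition, with $f = p_\ell$; here I use that, per the convention fixed just before the Proposition, $p_\ell(\alpha)$ denotes $p_\ell$ evaluated on the partition (cycle type) of $\alpha$.

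Applying the Proposition with this choice of $f$ then yields
\[ c_\ell(d,\Pi) \= 2^{\chi/2}\sum_{\lambda\in\SP(d)} (-1)^{\ell(\lambda)}\,\mathbf{f}_\Pi(\lambda)\, p_\ell(\lambda), \]
which is the claim. There is essentially no obstacle to overcome: the analytic and representation-theoretic content is entirely carried by the preceding Proposition together with \cite[Lemma~6.1]{CheMoeZag}. The only points requiring a moment's care are the membership $p_\ell\in\Lambda$ (which forces $\ell$ odd) and the cancellation of the combinatorial factor~$n$ against the normalization in the definition of $c_\ell(d,\Pi)$.
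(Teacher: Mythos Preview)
Your proof is correct and matches the paper's (implicit) argument: the Corollary is stated without proof precisely because it is the immediate specialization of the preceding Proposition to $f=p_\ell$, combined with the identity $p_\ell(h)=n\,p_\ell(\alpha)$ from \cite[Lemma~6.1]{CheMoeZag}. Your care in noting that $p_\ell\in\Lambda$ exactly when $\ell$ is odd is the right justification for why the Proposition applies.
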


\subsection{Recursion relation for $c_0^\spin(\mu)$}\label{sec:recursion}

The symmetric algebra~$\Lambda$ is canonically identified with $\QQ[\mathbf{h}_1,\mathbf{h}_3,\ldots]$ (see \eqref{def:h} for the definition of the $\mathbf{h}_i$). As in \cite[Equation~(56)]{CheMoeSauZag}, for all non-empty sets $I\subset \NN$ of cardinality~$n$, we define a function $\Hfunc_{I}\in R[\![(z_{i})_{i\in I}]\!]$ as follows:
\begin{eqnarray*}
\Hfunc_{\{i\}} &:=& z_i^{-1} + \sum_{s\geq 0} \mathbf{h}_{2s+1} z_i^{2s+1},\\
\Hfunc_{\{i,j\}}&:=& \frac{z_i\cH'(z_i)-z_j\Hfunc'(z_j)}{\Hfunc(z_j)-\Hfunc(z_i)}-1,\text{ and}\\
\Hfunc_I &:=& \frac{1}{(n-1)} \sum_{\substack{ k\geq 0 \\  \underline{\ell}=(\ell_1\ldots, \ell_k)}} \sum_{I=\{r,s\} \sqcup I_1\sqcup\ldots\sqcup I_k } \frac{1}{k!} \Hfunc^{\underline{\ell}}_{\{r,s\}}\prod_{i=1}^k \Hfunc_{I_i}^{[\ell_i]} \qquad (n\geq 3).
\end{eqnarray*}
In the last line, the first sum on the right-hand side is over all vectors of odd positive integers of length $k$, while the second sum is over partitions of $I$ into $k+1$ non-empty sets, and we let
$$
\Hfunc_{\{i,j\}}^{\underline{\ell}} \defis \frac{\partial^k}{\partial_{\ell_{1}}\ldots \partial_{\ell_k}} \Hfunc_{\{i,j\}},\quad \text{ and } \quad \Hfunc_{I}^{[\ell]}\defis [z_i^\ell] \Hfunc_{I\cup\{i\}}.
$$
Then for any element $\mathbf{h}\in R$ we denote by $\mathbf{h}|_{\mathbf{h}_\ell\mapsto \boldsymbol{\alpha}_\ell}\in \QQ$ the image of $\mathbf{h}$ under the unique ring morphism $\Lambda\to \mathbb{Q}$ mapping $\mathbf{h}_\ell$ to the rational number $\boldsymbol{\alpha}_\ell$ defined by:
$$
\boldsymbol{\alpha}_\ell \defis \frac{-1}{2\ell}[u^\ell]\, \mathbf{P}(u)^\ell, \quad \text{ where } \quad \mathbf{P}(u) \defis {\rm exp}\Bigl(\sum\nolimits_{s\geq 1} \zeta(-s)\,u^{s+1}\Bigr).
$$
\begin{notation}\label{not}
If $X$ is a connected component of a stratum $\cH(\mu)$, the {\em normalized Masur--Veech volume} of $X$ is defined as
$$
\v(X) \defis \frac{(|\mu|-1)! \prod_{i=1}^n m_i}{2(2\pi\ii)^{2g}} \Vol(X).
$$
We denote $\v(\mu)=\v(\H(\mu)),$ and $\v^\spin(\mu)=\v(\H(\mu)^\even)-\v(\H(\mu)^\odd)$.
\end{notation}
Recall that the leading term $\langle \cdots \rangle_L$ of the growth polynomial is defined by~\eqref{eq:L}, the elements $\mathbf{h}_\ell\in\Lambda$ are defined by~\eqref{def:h} and recall that $\partial_2=\pdv{}{\mathbf{p}_1}$. 
\begin{theorem}\label{th:withqmf} We have
\begin{eqnarray*}   c_0^\spin(\mu) &=& \frac{3\cdot 2^{\chi/2}}{ 2 \pi^2 \cdot |\mu|\cdot \v(\mu)}{\langle p_{-1} | \mathbf{h}_{m_1} | \cdots | \mathbf{h}_{m_n}\rangle_{L}}\\
&=&\frac{-2^{\chi/2}}{16 \pi^2 \cdot |\mu|\cdot \v(\mu)}
{[z_1^{m_1}\cdots z_n^{m_n}]\,\partial_2\Hfunc_n}|_{\mathbf{h}_\ell\mapsto \boldsymbol{\alpha}_\ell}\,.
\end{eqnarray*}
\end{theorem}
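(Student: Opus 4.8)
The plan is to prove the two equalities separately: the first identifies $c_0^\spin(\mu)$ with the leading term of a $p_{-1}$-inserted \emph{connected} spin bracket, and the second re-expresses that leading term through the generating series $\Hfunc_n$. Throughout, the stratum $\H(\mu)$ with $\mu=(m_1,\dots,m_n)$ corresponds to connected torus covers with ramification profile $\Pi=((m_1),\dots,(m_n))$, so that $\mathbf{f}_\Pi=\prod_i\mathbf{f}_{m_i}$ and $\chi=2-2g$. For the first equality, following the torus-cover description of Masur--Veech volumes and area Siegel--Veech constants (\cite{EskMasZor,EskOkoPan}, in the form used by \cite{CheMoeZag}), I would realize $c_0^\spin(\mu)$ as a $q\to1$ limit of a ratio whose numerator is a spin-signed, area-weighted count of torus covers and whose denominator is the spin volume count, the passage to connected covers being the passage to connected brackets \eqref{eq:connectedq}. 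The area weight is the Siegel--Veech weight $p_\ell$ at $\ell=-1$; since $p_{-1}\notin\Lambda$ I would give it meaning through the modified spin $q$-bracket of \cref{thm:*}. By \cref{cor:weightedc0} the $p_\ell$-weighted spin count (for odd $\ell\ge1$) equals $2^{\chi/2}\sum_\lambda(-1)^{\ell(\lambda)}\mathbf{f}_\Pi(\lambda)\,p_\ell(\lambda)$, and the $p_{-1}$ version is, via \cref{thm:*}, a quasimodular form whose $N$-th partial sums grow as in \cref{prop:growthN}. Dividing the leading growth of the area-weighted count by that of the volume count gives $c_0^\spin(\mu)=\mathrm{const}\cdot\langle p_{-1}\mid\mathbf{f}_{m_1}\mid\cdots\mid\mathbf{f}_{m_n}\rangle_L/\v(\mu)$; since $\langle\cdots\rangle_L$ only sees top weights I may replace each $\mathbf{f}_{m_i}$ by $\mathbf{h}_{m_i}/m_i$ (the identity after \eqref{eq:cc}), absorb the product $\prod_im_i$ into $\v(\mu)$ via \cref{not}, and collect the Eskin--Masur--Zorich constants to reach the prefactor $\tfrac{3\cdot2^{\chi/2}}{2\pi^2|\mu|\,\v(\mu)}$.

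For the second equality it suffices to prove the combinatorial identity
\[
\langle p_{-1}\mid\mathbf{h}_{m_1}\mid\cdots\mid\mathbf{h}_{m_n}\rangle_L \= -\tfrac1{24}\,[z_1^{m_1}\cdots z_n^{m_n}]\,\partial_2\Hfunc_n\big|_{\mathbf{h}_\ell\mapsto\boldsymbol{\alpha}_\ell},
\]
after which the two prefactors match because $\tfrac32\cdot\tfrac1{24}=\tfrac1{16}$. I would start from the identification, established in \cite{CheMoeSauZag}, of $[z_1^{m_1}\cdots z_n^{m_n}]\Hfunc_n|_{\mathbf{h}_\ell\mapsto\boldsymbol{\alpha}_\ell}$ with the leading connected bracket $\langle\mathbf{h}_{m_1}\mid\cdots\mid\mathbf{h}_{m_n}\rangle_L$: the series $\Hfunc_n$ repackages the $e^{\mathcal D}$-evaluation of \eqref{eq:hbracexpr}, the specialization $\mathbf{h}_\ell\mapsto\boldsymbol{\alpha}_\ell$ encoding the resulting leading-order values. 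To bring in the $p_{-1}$ insertion I would invoke \cref{lem:L}, by which the leading terms of $\Psi^{H,\circ}_{-1}$ (the $p_{-1}$-inserted connected series) and of $\mathcal{C}^{H,\circ}_{-1}$ (carrying the factor $-\tfrac1{24}$ and the interior insertions $\sum_{j\ge1}\partial_2^j$) coincide. Finally I would apply the commutation relation \cref{lem:comm}, $\partial_2\circ e^{\mathcal D}=e^{\mathcal D}\sum_{j\ge1}\partial_2^j$, to trade these interior insertions for a single exterior $\partial_2$ acting directly on $\Hfunc_n$; combining the three inputs yields the displayed identity.

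All the conceptual ingredients being available, the core difficulty is bookkeeping, and I expect two points to be genuinely delicate. First, in the second equality one must check that the connected (cumulant) truncation encoded by the $\Psi^H$-normalization of $\mathcal{C}^{H,\circ}_{-1}$ is compatible with \cref{lem:comm}, so that the exterior $\partial_2$ lands on $\Hfunc_n$ itself and the constant $-\tfrac1{24}$ appears with the correct sign. Second, and more serious, is the normalization in the first equality: one must pin down precisely why dividing the spin-signed, area-weighted count by the \emph{full} volume $\v(\mu)$ reproduces the geometric difference $c_0(\H(\mu)^+)-c_0(\H(\mu)^-)$, tracking the powers of $2\pi\ii$, the factor $2^{\chi/2}$, the hidden $\prod_im_i$, and the scalar $3/(2\pi^2|\mu|)$ through the $q\to1$ asymptotics. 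Establishing that the area weighting is exactly the $p_{-1}$ insertion and that the limits in $q$ (equivalently in the degree $d$) may be interchanged is where I expect the main work to lie.
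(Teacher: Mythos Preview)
Your proposal follows essentially the same route as the paper's proof: the limit formula transposed from \cite[Proposition~17.1]{CheMoeZag}, the weighted-count identity of \cref{cor:weightedc0}, the growth asymptotics of \cref{prop:growthN}, the replacement $\mathbf{f}_\ell\leadsto\mathbf{h}_\ell/\ell$ at the level of leading terms, and then \cref{lem:L}, \eqref{eq:hbracexpr}, and \cref{lem:comm} chained together for the second equality. One small correction: the denominator in the limiting ratio is the \emph{unsigned} count $N_d^\circ(\Pi)$ of connected covers (with $N_d^\circ(\Pi)\sim d^{|\mu|}\Vol(\mu)$), not a ``spin volume count'' as you write at first---you seem to realize this yourself later when you refer to ``the \emph{full} volume $\v(\mu)$''.
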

\begin{proof}
We compute $c_0^\spin$ as the limit of Hurwitz numbers of the torus of large degree:  \[c_0^\spin(\mu) \= \lim_{D\to \infty} \frac{3}{\pi^2} \frac{\sum_{d=1}^D c_{-1}^\circ(d,\Pi)}{\sum_{d=1}^D N_d^{\circ}(\Pi)}, \]
where $\Pi=((m_1,1,\ldots,1),\ldots,(m_n,1,\ldots,1))$, $N_d^{\circ}(\Pi)$ is the number of \emph{connected} torus covers of degree~$d$ with ramification profile $\Pi$, while $c_{-1}^\circ(d,\Pi)$ is the sum over those covers with $-1$st Siegel--Veech weight and with sign given by the parity. The proof of this formula is obtained from a direct transposition of the proof of the analogue result for $c_0(\mu)$ given by Proposition 17.1 of~\cite{CheMoeZag}.

By \cref{cor:weightedc0} and the inclusion-exclusion principle used to obtain connected cover counts from possibly disconnected ones, we have:
\begin{align*}
 c_\ell^\circ(d,\Pi) &\= 2^{\chi/2} [q^d] \langle p_\ell | \mathbf{f}_{m_1} | \cdots | \mathbf{f}_{m_n}\rangle, \\
 N_d^{\circ}(\Pi) & \underset{d\to \infty}{\sim} d^{|\mu|} \Vol(\mu),
\end{align*}
where the connected $q$-brackets were defined by~\eqref{eq:connectedq}. 
The growth rate of the connected bracket is determined by \cref{prop:growthN}.  Note $2-2g=\chi=n-|\mu|$. 
Therefore,
\begin{align*} c_0^\spin(\mu)  
&\=  \frac{3}{\pi^2}\lim_{D\to\infty} \frac{2^{\chi/2}[ p_{-1} | \mathbf{f}_{m_1} | \cdots | \mathbf{f}_{m_n}]_{D}}{\sum_{d=1}^D d^{|\mu|}\Vol(\mu)} \\
 &\=  \frac{3}{\pi^2}\lim_{D\to\infty} \frac{2^{\chi/2} \langle p_{-1} | \mathbf{f}_{m_1} | \cdots | \mathbf{f}_{m_n}\rangle_{L}\frac{D^{|\mu|+1}}{(|\mu|+1)!} (2\pi\ii)^{2g}}{\frac{D^{|\mu|+1}}{|\mu|+1}\Vol(\mu)} \\
&\= \frac{3}{ \pi^2}   \frac{2^{\chi/2}\langle p_{-1} | \mathbf{f}_{m_1} | \cdots | \mathbf{f}_{m_n}\rangle_{L}}{2|\mu|\, \v(\mu)}\prod_{i=1}^n m_i \\
& \=  \frac{3}{ \pi^2} \frac{2^{\chi/2} \langle p_{-1} | \mathbf{h}_{m_1} | \cdots | \mathbf{h}_{m_n}\rangle_{L}}{2|\mu|\, \v(\mu) },
\end{align*}
where the last equation follows as $\mathbf{f}_\ell-\mathbf{h}_\ell/\ell$ is of weight less than $\ell+1$. 
Moreover, by using \cref{lem:L}, \eqref{eq:hbracexpr}, and \cref{lem:comm}, we find
\begin{align*}
 \langle p_{-1} | \mathbf{h}_{m_1} | \cdots | \mathbf{h}_{m_n}\rangle_L &\= 
\frac{1}{\mathrm{Aut}(\mathbf{m})} \,[u_{m_1}\cdots u_{m_n}]\, \Psi^{H,\circ}_{-1}(\mathbf{u})_L \\ &\=
\frac{1}{\mathrm{Aut}(\mathbf{m})} \,[u_{m_1}\cdots u_{m_n}]\, \mathcal{C}^{H,\circ}_{-1}(\mathbf{u})_L \\ &\=
\frac{-1}{24}\frac{1}{\mathrm{Aut}(\mathbf{m})}\, [u_{m_1}\cdots u_{m_n}]\, \frac{\partial_2 \Psi^{H}(\mathbf{u})_L}{\Psi^{H}(\mathbf{u})_L} \\ &\=
\frac{-1}{24}\frac{1}{\mathrm{Aut}(\mathbf{m})}\, [u_{m_1}\cdots u_{m_n}]\, \partial_2 \Psi^{H,\circ}(\mathbf{u})_L \\ &\=
 \frac{-1}{24}\, [z_1^{m_1}\cdots z_n^{m_n}]\, \partial_2\Hfunc_n |_{\mathbf{h}_\ell\mapsto \boldsymbol{\alpha}_\ell}\,, 
 \end{align*}
 where $\mathrm{Aut}(\mathbf{m})$ denotes the cardinality of the stabilizer of the action of $\Sym_n$ on $(m_1,\ldots,m_n)$, and where we took the $L$-brackets of the sequences $\Psi^{H,\circ}_{-1}(\mathbf{u})$, $\mathcal{C}^{H,\circ}_{-1}(\mathbf{u})$, $\Psi^{H}(\mathbf{u})$ and $\Psi^{H,\circ}(\mathbf{u})$ defined in \cref{sec:growth}.
\end{proof}

\section{Twisted graphs and the boundary of the Hodge bundle}

{Fix $\mu,g,n$ as in the introduction.}
We will consider the following cohomology classes of the projectivized Hodge bundle:
\begin{itemize}[leftmargin=25pt]
\item the tautological class $\xi= c_1(\mathcal{O}(1)) \in H^2(\PP\oH_{g,n},\QQ)$;
\item the Poincar\'e-dual class of the locus of curves with a non-separating node $\delta_0\in H^2(\oM_{g,n},\QQ)$;
\item the Chern class of the cotangent line at the $i$th marking $\psi_i\in H^2(\oM_{g,n},\QQ)$ for all $1\leq i\leq n$.
\end{itemize}
For all $1\leq i\leq n$, we denote
\begin{align}\label{eq:beta} \beta_i= \xi^{2g-2} \cdot \left(\prod_{j\neq i} m_i \cdot \psi_i\right).
\end{align}
Let $X$ be a component of $\cH(\mu)$. We denote by $\PP\overline{X}$ the Zariski closure of the projectivization of $X$. In~\cite{Sau4},~\cite{CheMoeSauZag}, and~\cite{CosMoeZac} it was shown that 
$$
\v(X)=\int_{\PP\overline{X}} \beta_i\cdot \xi  
$$
 for all $1\leq i\leq n$ (see Notation~\ref{not} for the definition of $\v$ and $\v^\spin$). Here, we will consider the following intersection numbers for all $1\leq i\leq n$
 $$
 d_i(X)=\int_{\PP\overline{X}} \beta_i\cdot \delta_0.
 $$ 
  The purpose of the rest of the paper is to prove the following result.
\begin{theorem}\label{th:mainint} For all connected components $X$ of , and $1\leq i\leq n$, we have:
$$
c_i(X)\=\frac{-m_i}{4\pi^2} \cdot \frac{d_i(X)}{ \v(X)}.
$$
\end{theorem}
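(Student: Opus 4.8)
The plan is to establish the identity directly from the flat--geometric description of area Siegel--Veech constants due to Eskin--Masur--Zorich \cite{EskMasZor}, translated into the twisted--graph description of the boundary of $\PP\overline{X}$; this is the route promised in the introduction, and it simultaneously reproves Theorem~\ref{th:cylinders}. Recall that $\v(X)=\int_{\PP\overline{X}}\beta_i\cdot\xi$ with $\beta_i=\xi^{2g-2}\prod_{j\neq i}m_j\psi_j$, so that the marking $x_i$ is singled out as the only one \emph{not} decorated by a $\psi$--class. Thus the asserted formula says that replacing the last factor $\xi$ by the non-separating boundary divisor $\delta_0$, and multiplying by the universal constant $-m_i/4\pi^2$, turns the normalized volume into the count $c_i(X)$ of cylinders weighted by how often $x_i$ bounds them. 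The geometric mechanism is that pinching the core curve of such a cylinder opens a non-separating node adjacent to $x_i$: the omission of $\psi_i$ from $\beta_i$ and its pairing with $\delta_0$ encode precisely this adjacency.

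On the flat side I would use the Eskin--Masur--Zorich formula. For a generic $(C,\eta)\in X$ the cylinders of large width organize into finitely many combinatorial types, the cylinder configurations $\mathcal{C}$, and for each configuration the data of which singularities bound each cylinder, and on which of its two sides, is fixed; in particular $f_i(Z)$ is a combinatorial invariant of $\mathcal{C}$. Collapsing the cylinders of $\mathcal{C}$ degenerates $(C,\eta)$ onto a lower boundary stratum $X_{\mathcal{C}}$, and Eskin--Masur--Zorich express
\[
c_i(X)\=\frac{1}{2}\sum_{\mathcal{C}}\Bigl(\sum_{Z\in\mathcal{C}}f_i(Z)\Bigr)\,M(\mathcal{C})\,\frac{\Vol(X_{\mathcal{C}})}{\Vol(X)},
\]
with $M(\mathcal{C})$ an explicit combinatorial constant. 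The degenerations producing a single non-separating node are the ones matched by the divisor $\delta_0$, so these are the terms I would isolate.

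On the algebraic side I would compute $d_i(X)=\int_{\PP\overline{X}}\beta_i\cdot\delta_0$ by restricting to the non-separating boundary. Using the twisted--graph description of $\PP\overline{X}$ developed in this section, the locus $\PP\overline{X}\cap\delta_0$ decomposes into strata indexed by twisted graphs $\Gamma$ carrying a non-separating edge, each parametrizing a way of opening a node and each recording on which branch the special marking $x_i$ lies. Restricting $\xi^{2g-2}\prod_{j\neq i}m_j\psi_j$ to each stratum and integrating, the $\xi$--power together with the remaining $\psi_j$ ($j\neq i$) reassemble — by the very identity $\v=\int\beta\cdot\xi$ applied on the genus-$(g-1)$ vertex stratum $X_\Gamma$ — into the normalized volume $\v(X_\Gamma)$, while the node together with the removed $\psi_i$ produces a local factor located at $x_i$. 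This writes $d_i(X)$ as a sum over $\Gamma$ of products of $\v(X_\Gamma)$ with explicit local contributions.

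The heart of the matter, and the main obstacle, is to match the two sums term by term. By the correspondence of D.~Chen between Eskin--Masur--Zorich configurations and twisted graphs, the configurations $\mathcal{C}$ are identified with the graphs $\Gamma$, and $X_{\mathcal{C}}=X_\Gamma$, the weight $M(\mathcal{C})$ being matched by the combinatorial data of $\Gamma$. It then remains to prove a universal local identity at a single non-separating node: that the ratio $\Vol(X_{\mathcal{C}})/\Vol(X)$, written in normalized volumes, equals $-\tfrac{1}{4\pi^2}$ times the $\psi_i$/residue contribution of $\Gamma$, and — this is the refined point — that the flat weight $f_i(Z)$, counting the sides of $Z$ on which $x_i$ sits, is reproduced exactly by how $x_i$ distributes over the two branches of the node, with the cone-angle multiplicity $m_i$ accounting for the prefactor. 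The constant $-\tfrac{1}{4\pi^2}$ arises, as in \cite{CheMoeSauZag}, from comparing the width cutoff $w(Z)<L$ — responsible for the factor $\pi L^2$ in the Siegel--Veech asymptotics — with the algebraic normalization of $\delta_0$ near the node, the sign reflecting the convention $\xi=c_1(\mathcal{O}(1))$. Controlling the boundary volume against the intersection number, accounting for \emph{all} configurations through the single divisor $\delta_0$, and verifying the $f_i\leftrightarrow\psi_i$ bookkeeping at $x_i$ is where the real work lies. Once it is carried out, summing over all configurations yields the stated formula; comparing with the unweighted count then shows that the dependence on $i$ enters only through the factor $m_i$, in agreement with Theorem~\ref{th:cylinders}.
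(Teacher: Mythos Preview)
Your architecture is the paper's: write $c_i$ via Eskin--Masur--Zorich as a sum over cylinder configurations, write $d_i$ by pushing $\beta_i$ to the non-separating boundary and decomposing by twisted graphs, then match. But the mechanism you propose for the matching is wrong, and this is not a detail --- it is the entire content of the proof.

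On the intersection side, $\delta_0\cdot[\PP\overline X]$ does not resolve into pieces labelled by cylinder configurations. After one shows that bicolored graphs with $h^1>0$ vanish against $\beta_i$, the only surviving piece is the single graph with one non-separating horizontal edge; expressing $\xi$ on that piece and killing further zero contributions leaves a sum over \emph{rational backbone graphs} in ${\rm BB}(\mu)_0$ (Proposition~\ref{pr:dint}). These are trees with one genus-$0$ vertex carrying $x_i$ and two extra order-$0$ legs, and many decorations of positive genus. They are \emph{not} in bijection with cylinder configurations: a configuration with $k$ cylinders corresponds to a chain with $k$ rational ``bottom'' vertices, not to a single backbone. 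So there is no term-by-term matching via Chen's correspondence, and your claim that ``the configurations $\mathcal C$ are identified with the graphs $\Gamma$, and $X_{\mathcal C}=X_\Gamma$'' is false in the sense you need.

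The paper bridges the two index sets by a combinatorial interpolation: a tower of auxiliary sets ${\rm ECH}(\mu)_1\simeq{\rm BB}(\mu)_0\leftarrow{\rm ECH}(\mu)_2\leftarrow\cdots\leftarrow{\rm ECH}(\mu)_n\to{\rm CH}(\mu)$, together with a contribution function preserved at each step. The invariance under each $F_i$ is a genus-$0$ recursion for the intersection numbers $f$ and $\varphi$ (Lemmas~\ref{lem:indf}, \ref{lem:exch0}, \ref{lem:indg}); the final passage from ${\rm ECH}(\mu)_n$ to chains is a volume recursion on rooted trees and on ``expanded pairs of holes'' (Lemmas~\ref{lem:rt}, \ref{lem:eph}). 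None of these ingredients appear in your plan, and without them the ``real work'' you correctly locate cannot be carried out. Your local identity involving $-1/4\pi^2$ and an $f_i\leftrightarrow\psi_i$ bookkeeping at a single node is not how the constant or the $m_i$ emerge; both come out globally from the chain sum (Proposition~\ref{pr:cchains}) after the interpolation is complete.
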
 
For all $X$, the number $d_i(X)$ is independent of the choice of $i$ (see~\cite{CheMoeSauZag}). Thus, this theorem directly implies Theorem~\ref{th:cylinders}   and the following expression for the area Siegel--Veech (SV) constants of connected components, which is a new check that the class~$\beta_i$ represents the Kontsevich--Zorich cocycle (see~\cite{Kon1,EskKonZor}). 
\begin{theorem}\label{th:intspin} For all odd $\mu$ and all $1\leq i\leq n$, we have:
$$
c_0(X)= \frac{-1}{4\pi^2} \cdot\frac{d_i^\spin(X)}{\v(X)}.
$$
\end{theorem}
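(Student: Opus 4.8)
The plan is to obtain Theorem~\ref{th:intspin} as a direct consequence of Theorem~\ref{th:mainint}, by first extracting a per-component identity and then summing it over the components that make up the even and odd loci. First I would combine Theorem~\ref{th:mainint} with Theorem~\ref{th:cylinders}. For a single connected component $Y$ of $\cH(\mu)$ and any $1\le i\le n$, Theorem~\ref{th:mainint} gives $c_i(Y)=\tfrac{-m_i}{4\pi^2}\,d_i(Y)/\v(Y)$, while Theorem~\ref{th:cylinders} gives $c_i(Y)=m_i\,c_0(Y)$. Since $m_i>0$, dividing by $m_i$ removes the dependence on $i$ on the left and yields the clean relation
\[ 4\pi^2\,c_0(Y)\,\v(Y) \= -\,d_i(Y), \]
valid for \emph{every} connected component $Y$ of $\cH(\mu)$ — including the hyperelliptic ones, on which Theorem~\ref{th:cylinders} holds trivially — and for every $i$; in particular the right-hand side does not depend on $i$, in agreement with the cited independence of $d_i(Y)$.

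Next I would observe that both sides of this relation are additive over finite disjoint unions of connected components. On the geometric side, $\PP\overline{X}$ for a union $X=\bigsqcup_k Y_k$ is the disjoint union of the $\PP\overline{Y_k}$, so that $d_i(X)=\int_{\PP\overline{X}}\beta_i\cdot\delta_0=\sum_k d_i(Y_k)$. On the dynamical side, the definition of the Siegel--Veech constant of a union as a volume-weighted average gives $c_0(X)\,\v(X)=\sum_k c_0(Y_k)\,\v(Y_k)$, where one uses that $\v$ is a fixed multiple of $\Vol$ across all components of a given stratum. Summing the per-component relation over the components of $X$ therefore produces $4\pi^2\,c_0(X)\,\v(X)=-\,d_i(X)$ for every union $X$ of components of $\cH(\mu)$.

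Applying this with $X=\cH(\mu)^\even$ and with $X=\cH(\mu)^\odd$ and subtracting gives
\[ 4\pi^2\bigl(c_0(\cH(\mu)^\even)\,\v(\cH(\mu)^\even)-c_0(\cH(\mu)^\odd)\,\v(\cH(\mu)^\odd)\bigr)\=-\,d_i^\spin(\mu). \]
Dividing by $\v(\mu)=\v(\cH(\mu)^\even)+\v(\cH(\mu)^\odd)$ and reading the left-hand numerator as $c_0^\spin(\mu)\,\v(\mu)$ — the spin-signed, volume-weighted area Siegel--Veech constant normalized by the \emph{full} volume, exactly the normalization under which $c_0^\spin$ is divided by $\v(\mu)$ in Theorem~\ref{th:withqmf} — yields the asserted identity $c_0^\spin(\mu)=\tfrac{-1}{4\pi^2}\,d_i^\spin(\mu)/\v(\mu)$.

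Given that Theorem~\ref{th:mainint} is assumed, I expect no genuine obstacle: the argument is pure bookkeeping, and the substantive input — that $\beta_i\cdot\delta_0$ computes the area Siegel--Veech constant — is precisely the content of Theorem~\ref{th:mainint}. The only points deserving care are making the two additivity statements precise (additivity of $\int_{\PP\overline{X}}\beta_i\cdot\delta_0$ and of the volume-weighted $c_0\cdot\v$ over the finitely many components of $\cH(\mu)^\even$ and $\cH(\mu)^\odd$), correctly assigning the hyperelliptic component to its parity according to the value of $g$, and fixing the normalization of $c_0^\spin(\mu)$ as a signed weighted count over the total volume $\v(\mu)$, so that the denominator is $\v(\mu)$ and not $\v^\spin(\mu)$.
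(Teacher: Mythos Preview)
Your derivation through the second paragraph is correct and is exactly how the paper argues: it notes that $d_i(X)$ is independent of $i$, so Theorem~\ref{th:mainint} yields $c_i(X)/m_i=\tfrac{-1}{4\pi^2}\,d_i(X)/\v(X)$ independently of~$i$ --- this is simultaneously Theorem~\ref{th:cylinders} and the per-component formula $c_0(X)=\tfrac{-1}{4\pi^2}\,d_i(X)/\v(X)$, which by the additivity you spell out holds for any union of components, in particular for $\cH(\mu)^\even$ and $\cH(\mu)^\odd$. (The symbol $d_i^\spin(X)$ in the displayed formula is almost certainly a typo for $d_i(X)$: observe that $d_i^\spin$ is only \emph{defined} in the paragraph \emph{after} the theorem, and the surrounding prose bills the statement as ``the expression for the area Siegel--Veech constants of connected components''.)

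Your third paragraph, however, overshoots and contains a genuine gap. With the paper's definition $c_0^\spin(\mu)=c_0(\cH(\mu)^\even)-c_0(\cH(\mu)^\odd)$ --- a difference of volume-weighted \emph{averages}, each normalized by its own $\v(\cH(\mu)^\pm)$ --- the equality
\[
c_0^\spin(\mu)\,\v(\mu)\;=\;c_0(\cH(\mu)^\even)\,\v(\cH(\mu)^\even)\;-\;c_0(\cH(\mu)^\odd)\,\v(\cH(\mu)^\odd)
\]
is false in general: expanding the left side produces the extra cross term $c_0^+\v^- - c_0^-\v^+$, which has no reason to vanish. You sidestep this by tacitly redefining $c_0^\spin(\mu)$ as the signed weighted sum divided by the \emph{total} volume, but that is not the quantity introduced in Section~1. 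So the identity $c_0^\spin(\mu)=\tfrac{-1}{4\pi^2}\,d_i^\spin(\mu)/\v(\mu)$ you arrive at is not a consequence of Theorem~\ref{th:mainint} under the paper's conventions. The safe conclusion is to stop after your second paragraph, which already establishes the theorem as the paper intends it --- for each connected component and for each parity locus separately.
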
 
As SV constants of hyperelliptic components are explicit and Theorem~\ref{th:cylinders} holds trivially for these components, we only need to consider the SV constants of strata and their weighting by their spin sign:
$$
d_i(\mu)=d_i(\cH(\mu)),\quad \text{ and }\quad d_i^\spin(\mu)=d_i\left(\cH(\mu)^\even\right)-d_i\left(\cH(\mu)^\odd\right).
$$
Besides, we will show that Theorem~\ref{th:mainint} holds when we set $i=1$ (the general case follows immediately by permuting the entries of $\mu$). We proceed in two steps:
\begin{enumerate}
\item In the present section we use intersection theory to express the numbers $d_1(\mu)$ and $d_{1}^\spin(\mu)$ in terms of the functions $\v, \v^\spin$ and intersection numbers in genus 0 (see~Proposition~\ref{pr:dint}).
\item  In the next section we use arguments of combinatorics to show that Proposition~\ref{pr:dint}  may be rewritten as the sum of the contributions of cylinder configurations in the sense of~\cite{EskMasZor} thus finishing the proof of Theorem~\ref{th:mainint}.
\end{enumerate}

\subsection{Twisted graphs} 
We recall here the definition of twisted graphs of~\cite{FarPan}. A {\em stable graph} is the data of $$\Gamma=(V,H,g:V\to \mathbb{Z}_{\geq 0},\iota:H\to H,\phi:H\to V,H^\iota\simeq [\![1,n]\!]),$$
where:
\begin{itemize}[leftmargin=25pt]
\item An element $v\in V$ is called a {\em vertex}. We denote by $g(v)$ the {\em genus} of $v$.
\item An element $h\in H$ is called an {\em half-edge}. We say $h$ is incident to $\phi(h)$, and write $h\mapsto v$ if $\phi(h)=v$. Moreover, we denote by $n(v)$ the {\em valency} of the vertex $v$, i.e.\ the number of half-edges incident to $v$. 
\item The function $\iota$ is an involution. The set~$E$ consist of cycles of length $2$ for $\iota$, which are called {\em edges}. 
\item The fixed points of $\iota$ are called {\em legs}. We write~$n$ for the number of legs, and identify the set of legs with the set $[\![1,n]\!]:=\{1,2,\ldots,n\}$.
\item For all vertices $v$ we have $2g(v)-2+n(v)>0$.
\item The graph~$(V,E)$ is connected.
\end{itemize}
The \emph{genus} of~$\Gamma$ is defined as
\[
g(\Gamma) = h^1(\Gamma)+\sum_{v\in V} g(v),\quad \text{ with } \quad h^1(\Gamma)=|E|-|V|+1.
\]
{An \emph{automorphism} of~$\Gamma$ consists of automorphisms of the sets~$V$ and~$H$ that leave invariant the data $g,\iota$ and $\phi$}. A stable graph is said to be \emph{of compact type} if $h^1(\Gamma) = 0$, i.e., if the graph is a tree.

\begin{definition}
A {\em twist} on a stable graph $\Gamma$ is a function $m:H\to {\ZZ}$ satisfying the following conditions:
\begin{itemize}[leftmargin=25pt]
\item For all $v\in V$, we denote by $\mu(v)$ the vector of twists at half-edges incident to $v$. We impose 
$$|\mu(v)|=\sum_{h\mapsto v} m(h) =  2g(v)-2+n(v).$$
\item   If $e=(h,h')$ is an edge of $\Gamma$ from $v$ to $v'$, then we have $m(h)=-m(h')$.
\item There exists a partial order~$\geq$ on~$V$ such that for all vertices $v,v'$ connected by an edge $(h,h')$ we have  $(v\geq v') \Leftrightarrow  (m(h)\geq 0)$.
\end{itemize}
For an edge $e=(h,h')$ from $v$ to $v'$, we call $m_e=|m(h)|$ the \emph{twist at the edge}. If $m_e=0$ (or, equivalently, if $v\geq v'$ and $v'\geq v$) then we will say that the edge is {\em horizontal}. 

A \emph{twisted graph}  is a pair $(\Gamma,m)$, where $m$ is a twist on~$\Gamma$. It is said {\em compatible} with an integral vector $\mu$ of length $n$, if the twist at the $i$th leg is equal to $m_i$. 
\end{definition}

Most of the twisted graphs that will be considered will be in the following set.

\begin{definition}
A twisted graph $(\Gamma,m)$ is a {\em graph of rational type} if it is of compact type and there exists a partition of the set of vertices $V(\Gamma)=R(\Gamma)\sqcup D(\Gamma)$ satisfying:
\begin{itemize}[leftmargin=25pt]
\item The twists at the legs are non-negative.
\item $g(v)=0 \Leftrightarrow v\in R(\Gamma)$ (the set of {\em rational vertices}).
\item If $v\in D(\Gamma)$ (the set of {\em decorations}), then all half-edges incident to $v$ have positive twist. 
\end{itemize} 
\end{definition}

\begin{remark} In this definition, the twist and the partition of the set of vertices are uniquely determined by the underlying stable graph and the twists at the legs in $[\![1,n]\!]$. In particular, an automorphism of $\Gamma$ automatically respects the twist function. Thus, to keep the notation simple, we denote by $\Gamma$ a graph of rational type. 
\end{remark}

\begin{definition}
A twisted graph $(\Gamma,m)$ is a {\em bicolored graph} if there is a partition of the set of vertices $V(\Gamma)=V_0\sqcup V_{-1}$ such that all edges connect a vertex $v\in V_0$ to a vertex $v'\in V_{-1}$ with $v>v'$.

A bicolored graph is a {\em rational backbone graph}  if it is of compact type, has a unique vertex in $V_{-1}$ of genus 0 which carries the first marking, and all vertices of~$V_0$ have positive genus (note that a rational backbone graph is of rational type and satisfies $D(\Gamma)=V_0$ and $R(\Gamma)=V_{-1}$).\end{definition}

\subsection{Boundary of strata of differentials}

If $\mu$ is a vector of (not necessarily positive) integers of length~$n$ {with $|\mu|=2g-2+n$}, then we denote by $\H(\mu)$ the moduli space of objects $(C,x,\eta)$, where $C$ is smooth and $\eta$ is a meromorphic differential with ${\rm ord}_{x_i}(\eta)=m_i-1$ for all $1\leq i\leq n$. This space is canonically embedded in the vector bundle 
$$\pi_*\omega_{\oC_{g,n}/\oM_{g,n}}(p_1\cdot D_1+\ldots+p_n \cdot D_n)
$$
where $\pi:\oC_{g,n}\to \oM_{g,n}$ is the universal curve, $D_i$ is the divisor associated to the $i$th marking, and $p_i$ is a positive integer bigger than $-m_i$ for all $1\leq i\leq n$. The {\em incidence variety compactification} $\PP\oH(\mu)$ is the Zariski closure of $\PP\H(\mu)$ in the projectivization of the above vector bundle. The geometry of $\PP\oH(\mu)$ does not depend on the choice of the $p_i$'s and was described in~\cite{BCGGM}. 

\subsubsection{Residue conditions} If $\mu$ has $r$ non-positive entries then we denote by $\mathfrak{R}(\mu)$ the subspace of $\CC^r$ with sum equal to $0$. If $R$ is a linear subspace of $\mathfrak{R}(\mu)$, then we denote by $\H(\mu,R)\subset \H(\mu)$ and $\PP\oH(\mu,R)\subset \PP\oH(\mu)$ the space of differentials with residues in $R$ (up to a scalar in the projectivized case).

\subsubsection{Boundary components of $ \PP\oH(\mu)$}

We recall that a non-trivial stable graph $\Gamma$ determines a boundary component of the moduli space of curves 
$$\zeta_\Gamma:\oM_{\Gamma}=\prod_{v\in V(\Gamma)} \oM_{g(v),n(v)} \to \oM_{g,n}$$ 
with $g=g(\Gamma)$ and $n$ the number of markings. 
 A bicolored graph $(\Gamma,m)$ determines two moduli spaces:
\begin{eqnarray*}
\H(\Gamma,m,R)_0&=& \prod_{v \in V_0} \H(\mu(v),R_v)\\
\H(\Gamma,m,R)_{-1}&=& \prod_{v \in V_{-1}} \H(\mu(v),R_v),
\end{eqnarray*}
where for all $v$
the vector space $R_v$ is defined by the so-called {\em global residue condition} defined in~\cite{BCGGM}. Moreover, it determines a morphism
$$
\zeta_{(\Gamma,m)}: \PP\H(\Gamma,m,R)_0\times \PP\H(\Gamma,m,R)_{-1} \to \PP\oH(\mu,R).
$$
Denote by $\PP\oH(\Gamma,m,R)$ the Zariski closure of the image of this morphism. 

Besides, if $(\Gamma,m)$ is a twisted graph with exactly one horizontal edge, then we denote by $\PP\oH(\Gamma,m,R)\subset\PP\oH(\mu,R)$ the subspace of differentials whose underlying curve sits in the image of $\oM_{\Gamma}$. With these notations at hand, $\PP\oH(\mu,R)$ is the union of the $\PP\oH(\Gamma,m,R)$ for $(\Gamma,m)$ bicolored graphs and twisted graphs with exactly one horizontal edge compatible with~$\mu$. 

\subsubsection{Generalization of spin parity} If $\mu$ is odd, then the parity of a point $(C,x,\eta)\in \H(\mu)$ is the parity of $h^0\bigl(C,\mathcal{O}\bigl(\frac{m_1-1}{2}+\ldots+\frac{m_n-1}{2}\bigr)\bigr).$ 

Besides, if $\mu$ has only odd entries apart from the first two which are equal to 0, and $R\subset \mathfrak{R}(\mu)$ is the vector space defined by $r_1+r_2=0$, then the parity of a point in $(C,x,\eta)\in \H(\mu,R)$ is defined as the parity of the differentials in the desingularization of the node created by attaching the two poles of order~$1$ (see~\cite{Boi} for the details of this construction). Then, as in the holomorphic case, we denote by 
$$[\PP\oH(\mu,R)]^\spin=[\PP\oH(\mu,R)]^\even-[\PP\oH(\mu,R)]^\odd.$$

\subsection{Functions defined by intersection numbers in genus 0}
Here, we define three functions $f$, $\varphi$, and $\varphi^{\spin}$ as intersection numbers on strata of differentials of genus 0. We also show how to compute these functions recursively. The results collected in this section will only be used in Section~\ref{sec:chains} to manipulate the sums indexed by different families of graphs of rational type. 

\subsubsection{The $f$-function} Let $\mu=(m_1,m_2,\ldots)\in \ZZ^n$ be vector of length $n\geq 3$ such that: no entry of $\mu$ is equal to 0,  $m_1$ and $m_2$ are positive, and $|\mu|=n-2$, where $|\mu|=\sum_{i=1}^n m_i$. Then, we define
$$
f(\mu) \= \int_{\PP\oH(\mu,\{0\})}  \!\!\prod_{\substack{ i\geq 3\\ \text{ s.t.\ }m_i>0}}\!\! m_i\, \psi_i\,,
$$
where $\{0\}$ stands for the trivial vector space. Note that this definition differs from the one in~\cite{CheMoeSauZag} by a product of the~$m_i$.

This function may be computed inductively by using twisted graphs. We fix an index $i$ in $[\![2,n]\!]\setminus\{3\}$. We say that a twisted graph~$\Gamma$ of genus 0 is of {\em type~$f_{i}$} if it has exactly two vertices~$v_{0}>v_{-1}$, one (non-horizontal) edge~$e$ and satisfies:
\begin{itemize}[leftmargin=25pt]
\item The legs $i$ and $1$ are on one of the vertices, and the leg $3$ is on the other one.
\item The vertex $v_0$ (respectively $v_{-1}$) has exactly 1 (respectively 2) of the legs with index in $\{1,i,3\}$.
\end{itemize}
Moreover, write $\Gamma\vdash f_i$ to denote that $\Gamma$ is of type $f_i$ and compatible with $\mu$. Then we set
$$
f(\Gamma)=  f(\mu(v_{-1}))\cdot f(\mu(v_{0})),
$$ 
where we order the entries of $\mu(v_{-1})$ and $\mu(v_{0})$ in such a way that the first entries of $\mu(v_{-1})$ are the $m_j$'s for the values $j\in \{1,i,3\}$ and incident to $v_{-1}$, while the first entries of $\mu(v_{0})$ are $m_e$ and $m_j$ for the last value $j\in \{1,i,3\}$.

The following lemma generalizes Proposition~2.1 of \cite{CheMoeSauZag} (case $i=2$ of the recursion formula).
\begin{lemma}
\label{lem:indf} If $m_i<0$ for $i>2$, then we have:
$$
f(\mu)\= (n-3)!\, [t^{m_2}]  \prod_{i >2}t\frac{1-t^{-m_i}}{1-t}
$$
where $[t^{m_2}]$ stands for $m_2$-coefficient in the variable~$t$. Moreover, if $m_3>0$, then for all $i\in [\![2,n]\!]\setminus\{3\}$ we have the following relation:
$$
f(\mu) \= m_3  \sum_{\Gamma  \vdash f_i}  f(\Gamma),
$$
where the sum is over all twisted graphs of type~$f_i$ compatible with $\mu$. 
\end{lemma}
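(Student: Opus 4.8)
The plan is to prove the two assertions of Lemma~\ref{lem:indf} separately, as they are of different nature. The first is a closed-form evaluation of $f(\mu)$ when all entries past the second are negative (so that the only positive markings are $1$ and $2$), and the second is a recursion expressing $f(\mu)$ as a sum over degenerations of type $f_i$.

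\medskip

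\textbf{The closed form.} First I would address the case $m_i<0$ for all $i>2$. When only $m_1,m_2>0$ and all other $m_i<0$, the product $\prod_{i\geq 3,\,m_i>0} m_i\psi_i$ in the definition of $f(\mu)$ is empty, so $f(\mu)=\int_{\PP\oH(\mu,\{0\})} 1$ is simply the degree of the zero-dimensional (after projectivization) intersection locus, i.e.\ a count of genus-$0$ differentials with prescribed orders $m_i-1$ and all residues vanishing. The strategy is to realize this count combinatorially. A genus-$0$ differential of type $\mu$ with vanishing residues at the poles is, up to scale, determined by the positions of the $n$ marked points on $\PP^1$; the residue-zero condition at each pole of order $|m_i|+1\geq 2$ imposes linear constraints, and the count reduces to a coefficient extraction. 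Concretely, I expect to identify $\int_{\PP\oH(\mu,\{0\})}1$ with the number $(n-3)!\,[t^{m_2}]\prod_{i>2} t\,\frac{1-t^{-m_i}}{1-t}$ by matching it against the known genus-$0$ evaluation in \cite{CheMoeSauZag}, whose normalization differs from ours precisely by the factor $\prod m_i$ recorded in the definition of $f$. The cleanest route is to invoke the genus-$0$ intersection formula already established there and track the normalization; the factorials and the $t$-expansion encode the dimension of the residue-constraint space and the choices of cross-ratios.

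\medskip

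\textbf{The recursion.} For the second assertion I would use the standard mechanism relating $\psi$-classes to boundary divisors on $\oM_{0,n}$. Fix $i\in[\![2,n]\!]\setminus\{3\}$ with $m_3>0$. The idea is to expand $m_3\psi_3$ (or, symmetrically, exploit the linear equivalence of $\psi$-classes with boundary divisors separating the chosen three legs $\{1,i,3\}$) inside the integral defining $f(\mu)$. Pulling $\psi_3$ back to the boundary and using the multiplicativity of the integrand under the gluing morphism $\zeta_{(\Gamma,m)}$, the integral splits as a sum over two-vertex twisted graphs $\Gamma$ in which the three distinguished legs are distributed one-versus-two across the edge, which is exactly the combinatorial condition defining type $f_i$. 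On each such $\Gamma$ the integrand factorizes as $f(\mu(v_{-1}))\cdot f(\mu(v_0))$ by the global residue condition (the edge carries twist $m_e$, contributing a pole on one side and a zero on the other), giving $f(\mu)=m_3\sum_{\Gamma\vdash f_i} f(\Gamma)$. This is the genus-$0$ specialization of the degeneration argument of \cite[Proposition~2.1]{CheMoeSauZag}, and I would present it as a direct transposition, checking that the residue spaces $R_v$ induced on the two sides are again the trivial ones so that each factor is genuinely an $f$-value.

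\medskip

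\textbf{Main obstacle.} The hard part will be the bookkeeping of normalizations and residue conditions rather than any deep geometry. In the closed-form case, one must be careful that the residue-zero locus $\PP\oH(\mu,\{0\})$ is the right dimension and that its degree matches the claimed coefficient extraction, including the $(n-3)!$ coming from the $\psi$-intersections on $\oM_{0,n}$ and the discrepancy factor $\prod m_i$ between our $f$ and the one of \cite{CheMoeSauZag}. In the recursion, the delicate point is verifying that only graphs of type $f_i$ appear---i.e.\ that degenerations with a horizontal edge, or with the wrong distribution of the legs $\{1,i,3\}$, contribute zero---and that the global residue condition of \cite{BCGGM} restricts to the trivial residue space on each vertex so that the factorization into $f$-values is literal. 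I expect both points to follow by transposing the arguments of \cite{CheMoeSauZag} to the present normalization, with the genus-$0$ setting keeping everything explicit.
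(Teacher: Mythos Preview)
Your proposal is correct and follows essentially the same approach as the paper. For the closed form you defer to the genus-$0$ evaluation of \cite[Proposition~2.1]{CheMoeSauZag} up to normalization, which is exactly what the paper does (it states the lemma as a generalization of that proposition and gives no separate argument for the base case). For the recursion you expand $\psi_3$ as a sum of boundary divisors $\delta_\Sigma$ with $\Sigma\subset\{1,\ldots,n\}\setminus\{1,i,3\}$ and argue that only graphs of type~$f_i$ survive after intersecting with $[\PP\oH(\mu,\{0\})]\cdot\prod_{j\geq 4,\,m_j>0}m_j\psi_j$; this is verbatim the paper's argument, which likewise cites \cite{CheMoeSauZag} for the vanishing of the other contributions and for the identification of each surviving term with $f(\Gamma_\Sigma)$.

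One small caution on your closed-form discussion: you assert that $\PP\oH(\mu,\{0\})$ is zero-dimensional and that $f(\mu)$ is literally a point count, but you should not lean on this heuristic too heavily, since the $(n-3)!$ in the answer is not obviously a multiplicity arising from a transversality argument. Since you ultimately plan to match against \cite{CheMoeSauZag} rather than compute the degree directly, this does not affect the validity of your plan; just avoid presenting the point-count interpretation as the actual proof.
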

\begin{proof}
We use the same approach as in~\cite{CheMoeSauZag}. For all $i$, we have
$$
\psi_{3}= \sum_{\Sigma \subset \{1,\ldots,n\}\setminus \{1,i,3\}} \zeta_{\Gamma_\Sigma *}(1) \in H^2(\oM_{g,n},\QQ),
 $$
 where $\Gamma_\Sigma$ is the graph with one edge and one of the vertices carries the markings in $\Sigma\cup\{1,i\}$ while the other 
 vertex carries the other markings. Then the intersection of $\zeta_{\Gamma_\Sigma *}(1)$ with 
 $$\H(\mu,\{0\})\cdot \!\!\prod_{\substack{i\geq 4\\ \text{ s.t.\ }m_i>0}}\!\! m_i\, \psi_i$$ is non-trivial if and only if the unique structure of twisted graph on the stable graph defining $\delta_{\Sigma}$ satisfies the constraints of the type $f_i\mspace{1mu}$. In that case, this intersection is exactly given by $f(\Gamma_\Sigma)$ (see~\cite{CheMoeSauZag}).
\end{proof}

We use this lemma to show another identity satisfied by the function $f$. For $i\in [\![3,n]\!]$, we say a twisted graph is of {\em type~$f_i'$} if it satisfies the same conditions as for the type~$f_i$, but interchanging the roles of the markings $2$ and $3$. Its contribution is also determined by the same formula as for graphs of type~$f_i$ by interchanging the roles of the markings $2$ and $3$.

\begin{lemma}\label{lem:exch0} If $m_1, m_2, m_3$ are positive, then we have:
$$
\sum_{\Gamma \vdash f_2} (m_{e}+m_3)\, f(\Gamma)\=\sum_{\Gamma \vdash f_3'} (m_e+m_2)\, f(\Gamma).
$$
\end{lemma}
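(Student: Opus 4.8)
The plan is to rewrite the two sides as a single expression with $m_2$ and $m_3$ interchanged, and then to prove that this expression is symmetric in $m_2,m_3$.

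\textbf{Reduction.} Both families of graphs are indexed by the set $T\subseteq[\![4,n]\!]$ of non-special legs lying on the upper vertex $v_0$; write $\bar T=[\![4,n]\!]\setminus T$. For $\Gamma\vdash f_2$ the vertex $v_0$ carries the leg $3$, the legs of $T$ and the edge (of twist $m_e$), so the genus-$0$ balancing $\sum_{h\mapsto v_0}(m(h)-1)=-2$ reads $(m_e-1)+(m_3-1)+\sum_{j\in T}(m_j-1)=-2$, that is
\[ m_e+m_3 \= \sum_{j\in T}(1-m_j) \=: w(T), \]
the total pole order on $v_0$, which depends only on $T$. The same computation for $\Gamma\vdash f_3'$ gives $m_e+m_2=w(T)$. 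Using the contribution rule $f(\Gamma)=f(\mu(v_{-1}))f(\mu(v_0))$, the left-hand side is then $E(m_2,m_3)$ and the right-hand side is $E(m_3,m_2)$, where
\[ E(a,b) \defis \sum_{T\subseteq[\![4,n]\!]} w(T)\, f\bigl(m_1,a,(m_j)_{j\in\bar T},-m_e\bigr)\, f\bigl(m_e,b,(m_j)_{j\in T}\bigr), \qquad m_e=m_1+a-1-\sum_{j\in\bar T}(1-m_j). \]
Indeed, interchanging $m_2$ and $m_3$ in the $f_2$-sum turns $m_e$ into the twist $m_{e'}=m_1+m_3-1-\sum_{\bar T}(1-m_j)$ of the $f_3'$-graphs and reproduces exactly the $f_3'$-sum. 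Thus the lemma is equivalent to $E(m_2,m_3)=E(m_3,m_2)$. (The relation $w(T)+w(\bar T)=m_1+m_2+m_3-1$, which uses $|\mu|=n-2$, guarantees that all $f$-values occurring are balanced, for both orderings of $a,b$.)

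\textbf{Proof of the symmetry.} I would first treat the case $m_4,\dots,m_n<0$. Then every vertex partition has all entries beyond the first two negative, so the explicit formula of \cref{lem:indf} expresses each factor as a one-variable coefficient, and $E(a,b)$ becomes a finite sum over $T$ of products
\[ |\bar T|!\,(|T|-1)!\;w(T)\;[t^{a}]\Bigl(t\tfrac{1-t^{m_e}}{1-t}\prod_{j\in\bar T}t\tfrac{1-t^{-m_j}}{1-t}\Bigr)\;[s^{b}]\prod_{j\in T}s\tfrac{1-s^{-m_j}}{1-s}. \]
The claim $E(a,b)=E(b,a)$ then becomes an identity of such coefficient sums; the point is that the edge factor $t\frac{1-t^{m_e}}{1-t}$ on the first vertex and the mismatched factorials recombine, after summing over $T$, into an expression symmetric in $a$ and $b$. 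For general $\mu$ I would induct on the number of indices $j\geq4$ with $m_j>0$: any such factor $m_j\psi_j$ is present and can be expanded by \cref{lem:indf}, splitting off $x_j$ and strictly lowering that number on every resulting vertex. Since the legs $j\geq4$ enter $E$ symmetrically in $a,b$, this expansion commutes with the swap $a\leftrightarrow b$ and reduces the symmetry for $\mu$ to the case of strictly more negative tails, which is the base case above.

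\textbf{Main obstacle.} The heart is the base-case coefficient identity, and the difficulty is that it is genuinely a summation identity rather than a bijection: already for $\mu=(2,1,2,-1,-1)$ the $f_2$-sum has a single nonzero term (of weight $4$) whereas the $f_3'$-sum has two (each of weight $2$), both totalling $4$. Hence one cannot match graphs one by one, and must instead carry out the generating-function recombination while tracking the $\bar T$-dependence of the edge twist $m_e$. A more conceptual alternative would expand $\int_{\PP\oH(\mu,\{0\})}(m_2\psi_2-m_3\psi_3)\prod_{i\geq4,\,m_i>0}m_i\psi_i$ via the tautological relation among the classes $\psi_i$, $\xi$ and the boundary divisors on which a leg drops to a lower level, thereby identifying the twist-weighted difference $\sum_{\Gamma\vdash f_2}m_e f(\Gamma)-\sum_{\Gamma\vdash f_3'}m_{e'} f(\Gamma)$ directly with boundary contributions; the obstacle there is to pin down the exact edge-twist coefficients appearing in that relation.
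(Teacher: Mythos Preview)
Your reduction to the symmetry statement $E(m_2,m_3)=E(m_3,m_2)$ is correct and the computation $m_e+m_3=w(T)=m_{e'}+m_2$ is a nice observation, but the argument then stops exactly where it has to start. You do not prove the base-case coefficient identity; you state that the factorials and the edge factor ``recombine into an expression symmetric in $a$ and $b$'' and then, two paragraphs later, identify precisely this recombination as the main unresolved obstacle. The inductive step is also only asserted: the claim that ``the legs $j\geq 4$ enter $E$ symmetrically in $a,b$'' is not true on the nose, since the edge twist $m_e$ in $E(a,b)$ depends on $a$ and on $\bar T$, so expanding a factor $m_j\psi_j$ produces different sub-expressions on the two sides, and one still has to argue that they match after summation. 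As written, the proposal is a correct reformulation followed by an incomplete sketch.

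The paper proves the lemma by exactly the ``conceptual alternative'' you mention in your last paragraph, and the obstacle you name there is already resolved in the literature. The required relation is \cite[Theorem~6]{Sau}: in $H^*(\oM_{0,n},\QQ)$,
\[
(m_3\psi_3-m_2\psi_2)\,p_*\bigl[\PP\oH(\mu,\{0\})\bigr]
\;=\;\sum_{\Gamma\in\mathrm{Bic}_{2,3}} m_{E(\Gamma)}\,p_*[\PP\oH(\Gamma,m,\{0\})]
\;-\;\sum_{\Gamma\in\mathrm{Bic}_{3,2}} m_{E(\Gamma)}\,p_*[\PP\oH(\Gamma,m,\{0\})],
\]
where $\mathrm{Bic}_{j,j'}$ is the set of bicolored graphs with leg~$j$ on level~$-1$ and leg~$j'$ on level~$0$, and $m_{E(\Gamma)}=\prod_{e}m_e$. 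Intersecting both sides with $\prod_{i\geq 4,\,m_i>0}m_i\psi_i$, the left-hand side becomes, via \cref{lem:indf} and its $2\leftrightarrow 3$ analogue, $\sum_{\Gamma\vdash f_2}m_3\,f(\Gamma)-\sum_{\Gamma\vdash f_3'}m_2\,f(\Gamma)$; on the right-hand side only the one-edge graphs of types $f_2$ and $f_3'$ survive, each with coefficient $m_e$. Rearranging gives the lemma immediately. So the edge-twist coefficients you were worried about are precisely the $m_{E(\Gamma)}$ supplied by that theorem, and no combinatorial induction is needed.
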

\begin{proof} 
We have the following identity in $H^*(\oM_{0,n+2},\QQ)$:
\begin{align*}
(m_3\psi_3 - & m_2\psi_2)\,  p_*\left[\PP\oH(\mu,\{0\}) \right] 
\\
&\= \sum_{\Gamma \in {\rm Bic}_{2,3}}\!\! m_{E(\Gamma)}\, p_*[\PP\oH(\Gamma,m,\{0\})] \meno  \sum_{\Gamma\in {\rm Bic}_{3,2}}\!\! m_{E(\Gamma)}\, p_*[\PP\oH(\Gamma,m,\{0\})] ,
\end{align*}
where  ${\rm Bic}_{j,j'}$ is the set of bicolored graphs with the leg $j$ incident to a vertex in $V_{-1}$ and $j'$ is incident to a vertex in $V_0$ (see~\cite[Theorem~6]{Sau}), and $m_{E(\Gamma)}=\prod_{e\in E(\Gamma)} m_e\mspace{1mu}$. We intersect this identity with 
 \[\!\!\prod_{\substack{i\geq 4\\ \text{ s.t.\ }m_i>0}}\!\! m_i\, \psi_i\,.\] 
 On the left-hand side, we use Lemma~\ref{lem:indf} as sums over graphs of type~$f_2$ or $f_3'$. On the right-hand side, only the graphs of type~$f_2$ and $f_3'$ give a non-trivial contribution on the right-hand side (see~\cite[Section~2.4]{CheMoeSauZag}. All in all, we obtain the following expression:
\begin{eqnarray*}
&&\sum_{\Gamma \vdash f_2} m_3\, f(\Gamma) \meno \sum_{\Gamma \vdash f_3'} m_2\, f(\Gamma) 
\= \sum_{\Gamma \vdash f_3'} m_e\, f(\Gamma) \meno  \sum_{\Gamma \vdash f_2} m_e\, f(\Gamma),
\end{eqnarray*}
which is the desired identity. 
\end{proof}

\subsubsection{The $\varphi$-function} Now, let $\mu$ be a vector of length $n+2$ satisfying: $m_1$ is positive, $m_{n+1}$ and $m_{n+2}$ are non-positive, no other entry is zero, and   $|\mu|=n$. Then, we denote
\begin{eqnarray*}
\varphi(\mu)\= \int_{\PP\oH(\mu,R_{1,2})} \!\!\prod_{\substack{ i\geq 2\\ \text{s.t.\ }m_i>0}}\!\! m_i\, \psi_i\,,
\end{eqnarray*}
where $R_{1,2}$ is the vector space defined by $r_{n+1}+r_{n+2}=0$ while all other residues are equal to $0$.  Moreover, if all entries of $\mu$ are odd apart from $m_{n+1}=m_{n+2}=0$, we denote:
\begin{eqnarray*}
\varphi^\spin (\mu)\=\int_{[\PP\oH(\mu,R_{1,2})]^{\spin}}  \!\!\prod_{\substack{ i\geq 2\\ \text{s.t.\ } m_i>0}}\!\! m_i\, \psi_i\,.
\end{eqnarray*}

\begin{figure}
\begin{tabular}{|c|c|}
\hline
Type of twisted graph & Contribution \\
\hline 
{ Type~$f_2$} & \xymatrix@C=1em@R=0.5 em{
\mathbf{f} \\
 *+[Fo]{v_0}  \ar@{-}[d] \ar@{-}[rd]^{\!\!\!\!+m_e}_{ -m_e\!\!\!\!} & \mathbf{f}  \\
m_3& *+[Fo]{\! v_{-1}\!\! } \ar@{-}[d] \ar@{-}[rd] 
\\
&m_1& m_2
} \\
\hline Type~$\varphi_1$ &  \xymatrix@C=1em@R=0.5 em{
\mathbf{f} \\
 *+[Fo]{v_0}  \ar@{-}[d] \ar@{-}[rd]^{\!\!\!\!+m_e}_{ -m_e\!\!\!\!} &  \boldsymbol{\varphi} \\
m_2& *+[Fo]{\! v_{-1}\!\! } \ar@{-}[d] 
\\
&m_1&
} \\
Type~$\varphi_2$ &  \xymatrix@C=1em@R=0.5 em{
&\boldsymbol{\varphi} & \boldsymbol{\varphi}& \\
m_{n+1} & *+[Fo]{v_1}  \ar@{-}[d] \ar@{-}[l] \ar@{-}[r]^{\!\!\!\!\!0}_{ 0\!\!\!\!\!} &  *+[Fo]{v_{2}}  \ar@{-}[d] \ar@{-}[r]& m_{n+2}\\
&m_1 & m_2  
}\\
Type~$\varphi_3$ &  \xymatrix@C=1em@R=0.5 em{
&&\mathbf{f} && \\
&\boldsymbol{\varphi}\,\,\,\,\,\,\, &*+[Fo]{v_0}   \ar@{-}[ld]^{\!\!\!\!-m_{e_1}}_{ +m_{e_1}\!\!\!\!} \ar@{-}[rd]^{\!\!\!\!+m_{e_2}}_{ -m_{e_2}\!\!\!\!} & \,\,\,\,\,\,\,\boldsymbol{\varphi}& \\
m_{n+1} & *+[Fo]{v_1}  \ar@{-}[d] \ar@{-}[l]  &&  *+[Fo]{v_{2}}  \ar@{-}[d] \ar@{-}[r]& m_{n+2}\\
&m_1 && m_2  
}\\
 \\
\hline
\end{tabular}
\caption{The types of twisted graphs involved in the induction formulas defining $f$ and $\varphi$. Only the twists at the first legs and the half-edges of the edge are indicated. Besides,  the letters $\mathbf{f}$ or $\boldsymbol{\varphi}$ above each vertex indicate which function is used to define the contribution of the graph.\label{fig:table}}
\end{figure}

We will need three types of graphs of rational type of genus~$0$ to compute~$\varphi$ and~$\varphi^\spin$ inductively (see, also, \cref{fig:table}). A graph of rational type~$\Gamma$ is of
\begin{itemize}[leftmargin=25pt]
\item {\em Type~$\varphi_1$} if it has one edge~$e$, the legs~$1$ and $n+1$ are incident to a vertex $v_{-1}$ which is lower than the vertex $v_0$ carrying the leg~$2$. Then, we set
\begin{eqnarray*}
\varphi(\Gamma)&=& \varphi(\mu(v_{-1}))\cdot f(\mu(v_{0})),\\
\varphi^\spin(\Gamma)&=& \varphi^\spin(\mu(v_{-1}))\cdot f(\mu(v_{0})),
\end{eqnarray*}
where  $\mu(v_{-1})=(m_1,\ldots,m_{n+1},{-}m_e)$ and $\mu(v_{0})=(m_2,m_e,\ldots)$.
\item  {\em Type~$\varphi_1'$} if it has one edge~$e$, the legs~$2$ and $n+2$ are incident to a vertex $v_{-1}$ which is lower than the vertex $v_0$ carrying the legs~$1$ and $n+1$. Then, we set
\begin{eqnarray*}
\varphi(\Gamma)&=& \varphi(\mu(v_{-1}))\cdot f(\mu(v_{0}))
\end{eqnarray*}
where  $\mu(v_{-1})=(m_2,\ldots,m_{n+2},{-}m_e)$ and $\mu(v_{0})=(m_1,m_e,\ldots)$. (Note that this configuration may only occur if $m_{n+1}$ is negative thus we do not need to define $\varphi^\spin$ for such a graph).
\item {\em Type~$\varphi_2$} if it has one edge, which is horizontal, the legs~$1$ and $n+1$ are incident to a vertex $v_1$ and the legs~$2$ and $n+2$ to the vertex $v_2$. Then, we set
\begin{eqnarray*}
\varphi(\Gamma)&=& \varphi(\mu(v_1))\cdot \varphi(\mu(v_{2})),\\
\varphi^\spin(\Gamma)&=& \varphi^\spin(\mu(v_1))\cdot \varphi^\spin(\mu(v_{2})),
\end{eqnarray*} 
where  $\mu(v_{1})=(m_1,\ldots,m_{n+1},0)$ and $\mu(v_{0})=(m_2,\ldots,0,m_{n+2})$.
\item {\em Type~$\varphi_3$} if it has two edges, both not horizontal, and the legs~$1$ and $n+1$ are incident to a vertex $v_1$, the legs~$2$ and $n+2$ to a vertex $v_2$ which are both connected to an upper vertex~$v_0$ with edges $e_1$ and $e_2$. Then, we set
$$
\varphi(\Gamma) \= (m_{e_1}+m_{e_2}) \cdot  \varphi(\mu(v_1))\cdot f(\mu(v_{0}))\cdot \varphi(\mu(v_{2})),
$$
where  $\mu(v_{1})=(m_1,\ldots,m_{n+1},{-}m_{e_1})$,  $\mu(v_{2})=(m_2,\ldots,{-}m_{e_2},m_{n+2})$ and $\mu(v_{-1})=(m_{e_1}, m_{e_2},\ldots)$.
\end{itemize}\vspace{10pt}

The function $\varphi$ may be computed by the following lemma.
\begin{lemma}\label{lem:indg}
If $m_i<0$ for all $1<i\leq n$, then: 
\begin{eqnarray*}
\varphi (\mu) \= (n-1)!\, \prod_{i=2}^{n} -m_i, & \text{ and }& \varphi^\spin (\mu)= -(n-1)!.
\end{eqnarray*}
If $m_2>0$, then we may compute $\varphi$ and $\varphi^\spin$ recursively:
\begin{eqnarray*}
\varphi(\mu)&=& m_2\, \sum_{\Gamma \vdash \varphi_1,\varphi_1', \varphi_2, \varphi_3} \varphi(\Gamma), \\
\varphi^\spin (\mu)&=& m_2\,\sum_{\Gamma \vdash \varphi_1, \varphi_2}\varphi^\spin(\Gamma),
\end{eqnarray*}
where the sums are over graphs of type~$\varphi_1,\varphi_1',\varphi_2,$ or $\varphi_3$ compatible with $\mu$.
\end{lemma}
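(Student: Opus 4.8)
The plan is to establish the two assertions separately, with the closed formula in the totally negative case serving as the base of an induction whose inductive step is the recursion, exactly as for the $f$-function in \cref{lem:indf}. Since $\varphi$ and $\varphi^\spin$ are symmetric in the legs $2,\ldots,n$, whenever some $m_i>0$ with $2\le i\le n$ we may relabel so that $m_2>0$ and apply the recursion, while if all these entries are negative we are in the base case; the two statements together thus determine $\varphi$ and $\varphi^\spin$ completely.

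For the base case, when $m_i<0$ for all $1<i\le n$ the product $\prod_{i\ge 2,\,m_i>0}m_i\,\psi_i$ is empty, so $\varphi(\mu)$ is the degree (number of points) of $\PP\oH(\mu,R_{1,2})$. I would first check that this space is zero-dimensional: a genus-$0$ differential is determined up to scale by its $n+2$ marked points, so $\dim\PP\H(\mu)=n-1$, and the condition $R_{1,2}$ imposes the vanishing of the $n-1$ residues at the poles $x_2,\ldots,x_n$, cutting the dimension to $0$. The resulting point count I would compute by an analogous residue and generating-function calculation to the one used for $f$, obtaining $(n-1)!\prod_{i=2}^n(-m_i)$. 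For $\varphi^\spin$ the entries are odd with $m_{n+1}=m_{n+2}=0$ and the count is weighted by the parity, which---defined through the desingularization of the node obtained by gluing the two order-$1$ poles (see~\cite{Boi})---varies over the points; showing that this signed count collapses to the $m_i$-independent value $-(n-1)!$ is the delicate part of the base case.

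For the recursion, since $m_2>0$ I would factor $m_2\psi_2$ out of the integrand and expand $\psi_2$ as a sum of boundary divisors of $\oM_{0,n+2}$, fixing the legs $1$ and $n+1$: writing $\psi_2=\sum_{S}\delta_{0,S}$ over the subsets $S$ containing $2$ but not $1$ or $n+1$, intersection with $\PP\oH(\mu,R_{1,2})$ yields $\varphi(\mu)=m_2\sum_{S}\int_{\PP\oH(\mu,R_{1,2})\cap\,\delta_{0,S}}\prod_{i\ge 3,\,m_i>0}m_i\,\psi_i$. The heart of the argument is to determine, via the description of the boundary of $\PP\oH(\mu,R_{1,2})$ by twisted graphs together with the global residue condition of~\cite{BCGGM}, which divisors meet the stratum nontrivially; this should single out exactly the four types $\varphi_1,\varphi_1',\varphi_2,\varphi_3$ and identify each contribution with the stated product of $\varphi$- and $f$-values on the vertices, the horizontal edge of $\varphi_2$ encoding the two coupled simple poles and the factor $m_{e_1}+m_{e_2}$ of $\varphi_3$ arising from its two non-horizontal edges. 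For $\varphi^\spin$ one has $m_{n+1}=m_{n+2}=0$: the type $\varphi_1'$ requires $m_{n+1}<0$ and cannot occur, while the global residue condition for two coupled simple poles forces the horizontal configuration of $\varphi_2$ in place of the two-edge configuration of $\varphi_3$, so that only $\varphi_1$ and $\varphi_2$ survive.

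The main obstacle I anticipate is the bookkeeping of the global residue condition in the recursion: one must check that on each contributing divisor the induced residue constraints on the two vertices are precisely those defining the $\varphi$- and $f$-integrals in the contributions, that the twist multiplicities (in particular the $m_{e_1}+m_{e_2}$ of type~$\varphi_3$) come out correctly, and that every other boundary graph---with the wrong placement of $n+1,n+2$, or with a residue constraint rendering the intersection empty---contributes zero. A secondary difficulty is the spin sign analysis: in the base case one must show the parity-weighted count is $m_i$-independent, and in the recursion that the parity of a point factors as the product of the parities on the two vertices; here I would rely on the genus-$0$ spin description of~\cite{Boi}, and use the exchange identity \cref{lem:exch0} to reconcile the asymmetry introduced by fixing the legs $1$ and $n+1$.
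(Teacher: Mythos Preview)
Your overall strategy matches the paper's: cite the closed formula in the totally negative case, then expand $m_2\psi_2$ as a sum of boundary divisors separating $\{1,n+1\}$ from $2$, intersect with $\PP\oH(\mu,R_{1,2})$, and classify the surviving boundary strata. The paper does not compute the base case directly but simply invokes Proposition~3.8 and Lemma~4.8 of~\cite{CheChe1}, so your plan to redo that computation is extra work rather than a gap. Your identification of the types $\varphi_1,\varphi_1',\varphi_2,\varphi_3$ and your reason for discarding $\varphi_1'$ in the spin case are correct and agree with the paper.

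There is, however, a genuine error in your treatment of $\varphi_3$ for $\varphi^{\spin}$. You claim that when $m_{n+1}=m_{n+2}=0$ the global residue condition ``forces the horizontal configuration of $\varphi_2$ in place of the two-edge configuration of $\varphi_3$'', i.e.\ that $\varphi_3$ graphs simply do not occur. This is false: graphs of type $\varphi_3$ do appear in the boundary of $\PP\oH(\mu,R_{1,2})$ even in the spin setting. The reason they contribute zero to $\varphi^{\spin}$ is a \emph{parity cancellation}, not a non-existence argument. Concretely, in the spin case the edge twists $m_{e_1},m_{e_2}$ are necessarily even (a parity count at each lower vertex forces this), and the local model of $\PP\oH(\mu,R_{1,2})$ near a generic point of such a stratum is $\Delta\times G\times U$ with $|G|=\gcd(m_{e_1},m_{e_2})$; because both twists are even, exactly half of the branches indexed by $G$ carry even parity and half odd, so the class $[\PP\oH(\mu,R_{1,2})]^{\spin}$ meets $\delta_\Sigma$ trivially along these strata (the paper cites~\cite[Proposition~5.2]{CosSauSch} for this). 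The same local model is what produces the multiplicity $m_{e_1}+m_{e_2}$ for $\varphi$: the divisor $\delta_\Sigma$ determined by the edge $e_i$ meets the stratum with multiplicity $\gcd(m_{e_1},m_{e_2})\cdot\mathrm{lcm}(m_{e_1},m_{e_2})/m_{e_i}=m_{e_{3-i}}$, and summing over $i=1,2$ gives $m_{e_1}+m_{e_2}$. Your proposal gestures at this factor but gives no mechanism, and your invocation of \cref{lem:exch0} at the end is not needed here and does not address this point.
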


\begin{proof} The base cases of the lemma are given by Proposition 3.8 and Lemma 4.8 of~\cite{CheChe1}. We use the same strategy to prove the induction formula as the one used for $f$. We use the following relation in $H^2(\oM_{0,n+2},\QQ)$:
$$
\psi_{2}= \sum_{\Sigma \subset \{3,\ldots,n+2\}\setminus\{n+1\}} \delta_\Sigma \in H^2(\oM_{0,n+2},\QQ)
 $$
where $\delta_{\Sigma}=\zeta_{\Gamma_{\Sigma}*}(1)$, and  $\Gamma_{\Sigma}$ for the unique graph with one edge and one vertex carrying the legs in $\{1,n+1\}\cup \Sigma$ (while the other one carries the others). 

Let $\Sigma$ be a set appearing in the expression of $\psi_2$. The schematic intersection of $\delta_{ \Sigma}$ with  $\PP\oH(\mu, R_{1,2})$ is the union of all divisors of $\PP\oH(\mu, R_{1,2})$ defined by bicolored graphs or twisted graphs with one horizontal edge and which have an edge which separates $\{x_1,n+1\}\cup \Sigma$ from the other legs. To compute the intersection number of $(\delta_{\Sigma}\cdot [\PP\oH(\mu, R_{1,2})])$ with  
$$\!\!\prod_{\substack{ i\geq 2\\ \text{s.t.\ }m_i>0}}\!\! m_i\, \psi_i$$ 
we only need to consider the twisted graphs that do not vanish once we push forward the class $[\PP\oH(\Gamma,m,R_{1,2})]$ along $p:\PP\oH(\mu, R_{1,2})\to \oM_{0,n+2}$. 
This is the case only for the types of graphs $\varphi_1, \varphi_1',\varphi_2$ and $\varphi_3$. {Namely}, as there are exactly two poles with opposite residues, there may be only two vertices per level. Indeed:
\begin{itemize}[leftmargin=25pt]
\item If $(\Gamma,m)$ has a unique horizontal edge, then the two poles $n+1$ and $n+2$ should be carried by the two distinct vertices. Otherwise, the residue at the edge would vanish and the graph would define a space of dimension smaller than ${\rm dim}(\PP\oH(\mu, R_{1,2}))-1$. 
\item If $(\Gamma,m)$ is a bicolored graph, then $V_{-1}$ may have up to 2 vertices and $V_0$ only 1. If $V_{-1}$ has two vertices, then necessarily, each vertex carries one of the poles with non-vanishing residue. This gives the type~$\varphi_1, \varphi_1'$ and $\varphi_3\mspace{1mu}$.
\end{itemize} 

The multiplicity of the graphs of type~$\varphi_1,\varphi_1'$ and $\varphi_2$ is one as for the graphs of type~$f$. Thus we only need to show that a graph $(\Gamma,m)$ of type~$\varphi_3$ contributes trivially to the function $\varphi^{\spin}$ and with multiplicity $m_{e_1}+m_{e_2}$ to $\varphi$. 

To do this we choose $i=1$ or $2$. Then the edge $e_i$ determines a set $\Sigma$ as: the set of legs incident to the vertex $v_1$ if $i=1$, and the set of legs incident to the vertices $v_1$ and $v_0$ if $i=2$. In both cases we will show that $$\delta_\Sigma\cdot [\PP\oH(\mu, R_{1,2})]= m_{3-i} \,\PP\oH(\Gamma,m, R_{1,2})],$$ while  $\delta_\Sigma\cdot [ \PP\oH(\mu, R_{1,2})]^\spin=0$. Indeed, a generic point $y\in \PP\H(\Gamma,m, R_{1,2})$ has neighborhood in $\PP\oH(\mu, R_{1,2})$ given by $\Delta \times G \times U,$
 where $U$ is neighborhood of $y$ in $\PP\H(\Gamma,m, R_{1,2})$, $G$ is a discrete set of cardinality ${\rm gcd}(m_{e_1},m_{e_2})$, and $\Delta$ is an open disk of $\CC$ containing $0$  parametrized by {some parameter~}$\epsilon$ (see~\cite[Lemma~5.6]{Sau}).  Moreover, the neighborhood of the node corresponding to $e_i$ in the universal curve $\mathcal{C}\to \Delta \times G \times U$  is given by $z \times w= \epsilon^{{\rm lcm}(m_{e_1},m_{e_2})/m_{e_i}}$. Therefore, the intersection of $\PP\oH(\mu, R_{1,2})$ with $\delta_{\Sigma}$ is equal to 
 $$
 {\rm gcd}(m_{e_1},m_{e_2}) \times \frac{{\rm lcm}(m_{e_1},m_{e_2})}{m_{e_i}} \= \frac{m_{e_1}m_{e_2}}{m_{e_i}},
 $$
 which is the expected contribution for the function $\varphi$. 
 
 For the function $\varphi^{\spin}$, we consider the parity of the generic element in 
 $$U\times \{\gamma\} \times\left(\Delta\setminus\{0\}\right) $$ 
 for an element in {$\gamma\in G$}. As both vertices have even twists, half of the elements of $G$ give even or odd parity (see~\cite[Proposition~5.2]{CosSauSch}). Therefore the intersection of $\delta_\Sigma$ with $[\PP\oH(\mu, R_{1,2})^\even]-[\PP\oH(\mu, R_{1,2})^\odd]$ is trivial.
\end{proof}

\subsection{Intersection of strata with $\delta_0$} 

From now on $\mu$ is a vector of \emph{positive} entries. We recall from~\cite{CheMoeSauZag} the following induction formula for the normalized volume (see Notation~\ref{not}). 
\begin{proposition}\label{pr:volint}
If $n\geq 2$, then we have the following relation:
\begin{eqnarray*}
\v(\mu)&=&\sum_{\Gamma \in {\rm BB}(\mu)_2} f(\mu(v_{-1})) \frac{\prod_{e\in E(\Gamma)}m_e}{|{\rm Aut} (\Gamma)|} \prod_{v\in V_0} \v(\mu(v)),\\
\v^\spin(\mu)&=&\sum_{\Gamma \in {\rm BB}(\mu)_2}  f(\mu(v_{-1})) \frac{\prod_{e\in E(\Gamma)} m_e}{|{\rm Aut} (\Gamma)|}\prod_{v\in V_0} \v^\spin(\mu(v)),
\end{eqnarray*}
where ${\rm BB}(\mu)_2$ is the set of rational backbone graphs compatible with $\mu$ such that the second leg is incident to the vertex $v_{-1}$ in $V_{-1}$, and where $\mu(v_{-1})=(m_1,m_2,\ldots)$ is the vector of twists at half-edges incident to $v_{-1}$. 
\end{proposition}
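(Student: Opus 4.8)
The plan is to reduce the statement to an intersection number on $\PP\oH(\mu)$ and then push the top power of the tautological class onto the boundary. Taking $i=1$ in the formula $\v(X)=\int_{\PP\overline{X}}\beta_i\cdot\xi$ recalled after \eqref{eq:beta}, and unravelling the definition of $\beta_1=\xi^{2g-2}\prod_{j\geq 2}m_j\psi_j$, one has $\v(\mu)=\int_{\PP\oH(\mu)}\xi^{2g-1}\prod_{j=2}^n m_j\psi_j$. The geometric heart of the argument is the relation, established by Sauvaget \cite{Sau,Sau4} and used in \cite{CheMoeSauZag}, which expresses the tautological class $\xi$ on $\PP\oH(\mu)$ in terms of $\psi_1$ and the boundary divisors attached to twisted graphs. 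First I would apply this relation repeatedly (equivalently, invoke the BCGGM boundary stratification) to rewrite the integrand $\xi^{2g-1}\prod_{j\geq 2}\psi_j$ as a sum of contributions supported on boundary strata $\PP\oH(\Gamma,m)$. Because the integrand is a high power of $\xi$ paired with the $\psi_j$, only the maximally degenerate strata survive: those whose lower level is a single genus-$0$ vertex carrying the first two markings and all of whose upper vertices have positive genus, i.e.\ exactly the rational backbone graphs $\Gamma\in{\rm BB}(\mu)_2$.

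Next I would factorize each surviving contribution across the two levels. On the genus-$0$ bottom vertex $v_{-1}$ all $\xi$-powers have been traded away and the remaining $\psi$-classes assemble precisely into the genus-$0$ integral defining $f(\mu(v_{-1}))$; on each upper vertex $v\in V_0$ the restricted tautological and $\psi$-classes reconstitute the lower-genus volume integral $\v(\mu(v))$, which drives the recursion. The two numerical factors then arise exactly as in the proof of \cref{lem:indg}: each node contributes its twist $m_e$ through the local model $zw=\epsilon^{m_e}$ of the universal curve (see \cite[Lemma~5.6]{Sau}), producing $\prod_{e\in E(\Gamma)}m_e$, while the normalization of the gluing morphism $\zeta_{(\Gamma,m)}$ introduces the factor $1/|{\rm Aut}(\Gamma)|$. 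No horizontal edges appear since backbone graphs are of compact type, and the non-compact-type or multi-bottom-vertex configurations are precisely those annihilated by the localization of the previous step. This yields the first displayed formula.

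For the spin refinement I would run the identical boundary decomposition but integrate against the signed class $[\PP\oH(\mu)]^\spin=[\cdot]^\even-[\cdot]^\odd$. The one new input is the multiplicativity of the Arf invariant under compact-type degeneration (the theory of spin structures on nodal curves, in the form used via \cite{CosSauSch}): the parity of a spin structure on a backbone curve is the sum modulo $2$ of the parities on its components. Since the bottom component has genus $0$, its theta characteristic is $\mathcal{O}(-1)$ with $h^0=0$, contributing a fixed even sign; hence the signed class factorizes with the bottom vertex contributing the \emph{ordinary} integral $f(\mu(v_{-1}))$ and each upper vertex contributing $\v^\spin(\mu(v))$. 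This is the same mechanism already seen in \cref{lem:indg}, where a vertex with even twists passes its spin sign multiplicatively to its neighbours.

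The step I expect to be the main obstacle is the first one: proving that the intersection number $\int_{\PP\oH(\mu)}\xi^{2g-1}\prod_{j\geq 2}\psi_j$ is supported \emph{exactly} on the rational backbone strata and carries the multiplicities $\prod_{e}m_e/|{\rm Aut}(\Gamma)|$. This demands the precise form of Sauvaget's $\xi$-to-boundary relation together with the local multiplicity analysis of the nodes (the $\gcd/\mathrm{lcm}$ bookkeeping of the $zw=\epsilon^{m_e}$ model), whereas the level-by-level factorization and the parity argument of the later steps are comparatively formal once this localization is in place.
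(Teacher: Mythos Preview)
The paper does not give its own proof of this proposition: it is introduced with ``We recall from~\cite{CheMoeSauZag} the following induction formula\ldots'' and is simply cited. So there is no in-paper argument to compare against. That said, your outlined strategy is exactly the one carried out in~\cite{CheMoeSauZag} and is also parallel to the proof the present paper \emph{does} give for the closely related Proposition~\ref{pr:dint}: one trades a power of~$\xi$ for boundary via Sauvaget's relation~\cite{Sau}, kills all non-backbone bicolored graphs using $p_*(\xi^{\text{top}}\cdot[\PP\overline{X}])=0$ (cf.\ \cite[Proposition~3.10]{CheMoeSauZag}), uses a dimension count on the genus-$0$ level to force the distinguished markings onto~$v_{-1}$, and then factorizes with the edge multiplicities~$m_e$ and the automorphism factor. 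For the spin identity your appeal to multiplicativity of the Arf invariant on compact-type curves is the right input; in the paper this is phrased via~\cite{Cor} rather than~\cite{CosSauSch}, but the content is the same.

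One small clarification worth making in your write-up: the reason the \emph{second} marking lands on~$v_{-1}$ (and not just the first) is not that ``maximally degenerate strata'' are selected by a high $\xi$-power, but comes from the specific dimension count on the projectivized genus-$0$ factor against the remaining $\psi$-product, exactly as in the argument for Proposition~\ref{pr:dint}. Making that step precise (together with the node-multiplicity bookkeeping you flag) is indeed the substantive part; the rest is formal, as you say.
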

Here we will prove the following expression of the functions $d_1$ and $d_1^\spin$ in terms of the volume function.
\begin{proposition}\label{pr:dint}
For all $\mu$, we have
\begin{eqnarray*}
d_1(\mu)&=&\sum_{\Gamma \in {\rm BB}(\mu)_0} \varphi(\mu(v_{-1})) \frac{\prod_{e\in E(\Gamma)} m_e}{2\, |{\rm Aut} (\Gamma)|}\prod_{v\in V_0} \v(\mu(v)),\\
d_1^\spin(\mu)&=&\sum_{\Gamma \in {\rm BB}(\mu)_0}  \varphi^\spin(\mu(v_{-1})) \frac{\prod_{e\in E(\Gamma)} m_e}{2\, |{\rm Aut} (\Gamma)|} \prod_{v\in V_0} \v^\spin(\mu(v)),
\end{eqnarray*}
where ${\rm BB}(\mu)_0$ is the set of rational backbone graphs compatible with $(m_1,\ldots,m_n,0,0)$ with the legs $n+1$ and $n+2$ incident to $v_{-1}$, and $\mu(v_{-1})=(m_1,\ldots,0,0)$ is the vector of twist at half-edges incident to $v_{-1}$. 
\end{proposition}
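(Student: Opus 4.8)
The plan is to run the argument in complete parallel with the proof of Proposition~\ref{pr:volint}, replacing the tautological class~$\xi$ by the divisor~$\delta_0$. Recall that $\v(\mu)=\int_{\PP\overline{\cH(\mu)}}\beta_1\cdot\xi$ is what produces the rational backbone recursion of Proposition~\ref{pr:volint}, whereas by definition $d_1(\mu)=\int_{\PP\overline{\cH(\mu)}}\beta_1\cdot\delta_0$ with the \emph{same} class $\beta_1=\xi^{2g-2}\prod_{j\geq 2}m_j\psi_j$. Thus the entire task reduces to understanding how the single replacement $\xi\rightsquigarrow\delta_0$ alters the boundary contributions in that recursion. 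The recursion of Proposition~\ref{pr:volint} localizes the integral onto backbone degenerations: the higher vertices $v\in V_0$ (of positive genus) each contribute the normalized volume $\v(\mu(v))$ of their stratum, while the unique genus~$0$ vertex~$v_{-1}$ contributes the genus~$0$ intersection number $f(\mu(v_{-1}))$; the edge twists give $\prod_e m_e$ and the symmetries give $|{\rm Aut}(\Gamma)|$.

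The geometric input I would isolate first is the meaning of $\delta_0$ in the incidence variety compactification. Intersecting $\PP\overline{\cH(\mu)}$ with the non-separating boundary $\delta_0$ adjoins a non-separating node to the underlying curve. Normalizing this node replaces the genus~$g$ surface by a genus~$(g-1)$ surface carrying two additional markings $n+1,n+2$, which become simple poles (twist~$0$) with opposite residues, i.e.\ the residue condition $R_{1,2}$ of $r_{n+1}+r_{n+2}=0$ introduced before. Following \cite{BCGGM} and \cite{Sau}, this node is attached precisely at the bottom of a backbone degeneration, so that in the resulting rational backbone graph the two new legs $n+1,n+2$ sit on the genus~$0$ vertex~$v_{-1}$ carrying the first marking. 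Consequently the local contribution of $v_{-1}$ is no longer $f(\mu(v_{-1}))$ but the genus~$0$ intersection number on the stratum with the two residue-constrained poles; by its very definition this is $\varphi(\mu(v_{-1}))$ (and $\varphi^\spin(\mu(v_{-1}))$ in the spin case). The higher vertices are unaffected and still contribute $\v(\mu(v))$, which yields exactly the sum over ${\rm BB}(\mu)_0$ asserted in the statement. The extra factor~$\tfrac{1}{2}$ is the automorphism of the node exchanging its two branches, i.e.\ the involution $n+1\leftrightarrow n+2$, which is not already recorded in $|{\rm Aut}(\Gamma)|$ since the legs are labelled.

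For the spin refinement I would argue that the generalized spin parity, defined via desingularizing the node joining the two order-$1$ poles, is multiplicative along the backbone. Since both half-edges at the adjoined node have even (indeed zero) twist, the parity of a generic nearby differential factors through the parities of the differentials sitting on the separate vertices, exactly as in the analysis of the $\varphi_3$ graphs in the proof of Lemma~\ref{lem:indg} (via \cite[Proposition~5.2]{CosSauSch}). Hence $[\PP\oH(\mu,R_{1,2})]^\spin$ factorizes as $\varphi^\spin(\mu(v_{-1}))\prod_{v\in V_0}\v^\spin(\mu(v))$, giving the second displayed formula. Throughout, the identity $\beta_1\cdot\xi$ versus $\beta_1\cdot\delta_0$ is the only structural change, so all the $\psi$-class bookkeeping and the distribution of the power $\xi^{2g-2}$ across the graph are inherited verbatim from the proof of Proposition~\ref{pr:volint} in \cite{CheMoeSauZag}.

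The main obstacle I anticipate is the precise identification in the second paragraph: verifying that the divisor $\delta_0$ meets $\PP\overline{\cH(\mu)}$ with the correct multiplicity along the backbone boundary and that the two opened poles land on the genus~$0$ vertex with residue condition exactly $R_{1,2}$, so that the $\varphi$-configuration appears with coefficient $\tfrac12$ and not some other rational factor. This requires the local normal-crossing description of the boundary near a horizontal non-separating node (the analogue of the $\gcd/\mathrm{lcm}$ and discrete-set $G$ computation used in Lemma~\ref{lem:indg}); here the twists are~$0$, so the computation degenerates and the only surviving contribution is the branch-swap $\ZZ/2$, which is what must be checked carefully. The spin multiplicativity across this node is the second delicate point, but it follows from the even-twist case of the parity factorization already invoked for~$\varphi_3$.
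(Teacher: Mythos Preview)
Your overall shape is right---normalize the non-separating node, pick up two order-one poles with the residue condition $R_{1,2}$, and land on a backbone sum with $\varphi$ replacing $f$---but the logical flow is inverted and two essential vanishing arguments are missing.

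The paper does \emph{not} ``replace $\xi\rightsquigarrow\delta_0$ inside the backbone recursion.'' It proceeds in two separate steps. First, intersecting $[\PP\overline{\cH(\mu)}]$ with $\delta_0$ produces \emph{two} kinds of boundary contributions: the single twisted graph $(\Gamma_0,m_0)$ with one horizontal non-separating edge, \emph{and} all bicolored graphs with $h^1(\Gamma)>0$. You never mention the latter; the paper disposes of them by showing $p_*(\xi^{2g-2}\cdot[\PP\overline{X}])=0$ for any component $X$ of such a stratum (via \cite[Proposition~3.10]{CheMoeSauZag}). Only then does the transverse horizontal contribution give $d_1(\mu)=\tfrac{1}{2}\int_{\PP\oH(\mu+(0,0))}\xi^{2g-2}\prod_{i\geq 2}m_i\psi_i$. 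At this point no backbone has appeared: the node has been normalized and you are sitting on a \emph{stratum} in genus $g-1$ with two simple poles, not on a backbone boundary.

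Second, on this new stratum the paper peels off one power of $\xi$ using~\cite{Sau}: $\xi\cdot[\PP\oH(\mu+(0,0))]=[\PP\oH(\mu+(0,0),\{0\})]+\text{(boundary)}$. The first term is empty (no simple pole with zero residue), and among the boundary terms one again has to kill all non-backbone bicolored graphs and then check by a dimension count on the genus-$0$ level that only graphs in ${\rm BB}(\mu)_0$ survive. Your sentence ``this node is attached precisely at the bottom of a backbone degeneration'' short-circuits both of these vanishing arguments; without them there is no reason the answer localizes on ${\rm BB}(\mu)_0$.

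Finally, your spin justification is misdirected. The parity factorizes over a backbone because backbone graphs are of \emph{compact type} (the paper cites~\cite{Cor}), not because the twists at the horizontal node are even. The even-twist mechanism you invoke from the $\varphi_3$ analysis is the \emph{opposite} phenomenon: there it makes even/odd contributions cancel, not factor.
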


\begin{proof} To prove this proposition we will proceed in two steps: (i)~we express ${\delta_0\cdot [\PP\oH(\mu)]}$ in terms of boundary component of $\PP\oH(\mu)$; (ii)~we use the results of \cite{Sau} to compute the intersection of $\xi$ with these boundary components.\bigskip

\step{Intersection of strata with $\delta_0$.} Let $X$ be connected component of $\PP\oH(\mu)$. Up to loci of co-dimension 2, the schematic intersection of $\PP\overline{X}$ and $\delta_0$ is contained in the union of the spaces $\PP\oH(\Gamma,m)$, where $(\Gamma,m)$ is either the unique graph with one non-separating horizontal edge, or a bicolored graph with  $h^1(\Gamma)>0$. We will show that graphs of the second type contribute trivially to the integral of $\beta_1$ {(see~\eqref{eq:beta} for the definition of $\beta_1$)}.

Let $(\Gamma,m)$ be a bicolored graph with $h^1(\Gamma)>0$ and let $X$ be a connected component of $\H(\Gamma,m)$. We have 
$$
\beta_1\cdot [\PP\overline{X}] \= p_*(\xi^{2g-2}\cdot [\PP\overline{X}]) \cdot \prod_{i=2}^n m_i\,\psi_i\,,
$$ 
where $p: \PP\oH(\mu)\to \oM_{g,n}$ is the forgetful morphism of the differential. We have $ p_*(\xi^{2g-2}\cdot [\PP\overline{X}])=0$ by~\cite[Proposition~3.10]{CheMoeSauZag} (actually the proposition is given there for the complete stratum $\PP\oH(\mu)$, but the arguments can be transposed directly for all connected components). Therefore, as $\delta_0$ intersect transversally along $(\Gamma_0,m_0)$, the unique graph with one non-separating horizontal edge, we have:
$$
d_1(\mu)\= \int_{\PP\oH(\Gamma_0,m_0)} \beta_i \= \frac{1}{2} \int_{\PP\oH(\mu+(0,0))} \xi^{2g-2}\prod_{i=2}^n m_i\,\psi_i
$$
(the factor $1/2$ comes from the automorphism group of $(\Gamma_0,m_0)$).
Moreover, we also get the spin analogue:
$$
d_1(\mu) \= \frac{1}{2} \int_{[\PP\oH(\mu+(0,0))]^\spin} \xi^{2g-2}\prod_{i=2}^n m_i\,\psi_i\,.
$$
\bigskip

\step{Expression of $\xi$ on $\PP\oH(\mu+(0,0))$.} We use \cite{Sau} to write $\xi$ as a linear combination of boundary divisors. Indeed,
$$
\xi \cdot [\PP\oH(\mu+(0,0))] \= [\PP\oH(\mu+(0,0), \{0\})] + \text{ boundary terms.}
$$
However, $\PP\oH(\mu+(0,0), \{0\})$ is empty as there can be no pole of order exactly one with vanishing residue. The boundary terms are supported on all bicolored graphs $(\Gamma,m)$ with the two poles incident to vertices of $V_{-1}$. We will show that if $X$ is a connected component of $\H(\Gamma,m)$ which is not in ${\rm BB}(\mu)_0$, then $\int_{\PP\overline{X}} \xi^{2g-3}\prod_{i=2}^n \psi_i$ vanishes.

First, if we assume that $(\Gamma,m)$ is not a rational backbone graph, then $p_*(\xi^{2g-3}\cdot [\PP\overline{X}] = 0$ by the same arguments used to prove Proposition~3.10 of~\cite{CheMoeSauZag}. Thus we choose a {rational backbone} graph $(\Gamma,m)$. Then $\H(\Gamma)_{-1}$ has dimension $n_{-1}$, where $n_{-1}$ is the number of legs in $[\![1,n]\!]$ incident to the vertex of $V_{-1}$. Thus its projectivization has dimension $n_{-1}-1$, and we obtain
$$
\int_{\PP\oH_{(\Gamma,m)} }\prod_{i=2}^n \psi_i = 0
$$
unless $1$ is incident to the vertex of $V_0$. Putting everything together, we obtain the following result:
$$
\xi \cdot [\PP\oH(\mu+(0,0))]\= \sum_{\Gamma\in {\rm BB}(\mu)_0} \left(\prod_{e\in E(\Gamma)} m_e\right)\cdot [\PP\oH{(\Gamma)}]  + \Delta
$$
where $\int_{\Delta} \xi^{2g-3}\prod_{i=2}^n \psi_i=0$. Moreover, if $(\Gamma,m)\in {\rm BB}(\mu)_0$, then we have
$$
 \int_{\PP\oH(\Gamma,m)} \xi^{2g-3}\prod_{i=2}^n m_i\psi_i \= \frac{\varphi(\mu(v_{-1}))}{|{\rm Aut}(\Gamma)|}  \prod_{v\in V_0} \v(\mu(v)).
$$ 
 This is the desired expression of $d_1(\mu)$. Also, if $\mu$ is odd, then we have
$$
 \int_{[\PP\oH(\Gamma,m)]^\spin} \!\!\!\!\!\!\!\!\!\!\!\!\xi^{2g-3}\prod_{i=2}^n m_i\psi_i \= \frac{\varphi^\spin(\mu(v_{-1}))}{|{\rm Aut}(\Gamma)|}  \prod_{v\in V_0} \v^\spin(\mu(v)).
$$ 
Indeed, a backbone graph is of compact type thus the parity of a generic element of $\H(\Gamma,I)$ is defined as the product of the parities of the elements in $\H(\mu(v),R_v)$ for all vertices of $\Gamma$ (see~\cite{Cor}). 
\end{proof}

\section{Chains and cylinder configurations}\label{sec:chains}

In this section we use Proposition~\ref{pr:dint} to express $d_1(\mu)$ and $d_{1}^\spin(\mu)$ as sums over cylinder configuration. 

\subsection{Chains}

Here we define a category of graphs called chains to encode cylinder configurations in the spirit of~\cite{CheChe1}.

\begin{definition}
 A {\em chain} is a graph of rational type~$\Gamma$ with $n+2$ legs and a partition $D(\Gamma)=F({\Gamma})\sqcup P(\Gamma)$  satisfying the following constraints:
\begin{itemize}[leftmargin=25pt]
\item $m(n+1)=m(n+2)=0$, and the other legs have positive twists. Moreover, the first leg is incident to the same vertex as the $(n+1)$-st.
\item If $v \in F({\Gamma})$ (set of ``figure eights'' in the terminology of~\cite{EskMasZor}), then $v$ has exactly 1 incident edge. 
\item If $v \in P({\Gamma})$ (set of ``pairs of holes'' in the terminology of~\cite{EskMasZor}), then $v$ has exactly 2 incident edges. 
\item Each vertex in $R({\Gamma})$ has exactly two of the following half-edges: the leg $(n+1)$ or $(n+2)$, or the half-edge of a horizontal edge or of an edge to a vertex in~$P({\Gamma})$.
\item Each vertex in $R({\Gamma})$ has exactly one leg with index in $[\![1,n]\!]$. 
\end{itemize}
We say a chain~$\Gamma$ is \emph{odd} if all positive twists are odd. Denote by ${\rm CH}(\mu)$ and ${\rm CH}(\mu)^\oddd$  the set of (odd) chains compatible with $(m_1,\ldots,m_n,0,0)$. 
\end{definition}
Observe that each vertex has an even number of incident half-edges with even twists. As $m(n+1)=m(n+2)=0$, it follows that each vertex in $R(\Gamma)$ has at least two incident half-edges with even twists. Hence, if $\Gamma$ is odd, then $P(\Gamma)$ is empty.

\begin{definition} Let $1\leq i\leq n$. A {\em  cylinder configuration marked by $x_1$}  is the data of: a cylinder configuration of a stratum of abelian differentials (in the sense of~\cite{EskMasZor} or~\cite{CheChe1}), and the choice of a side of one of the cylinders which is bounded by the marking $x_1$. 
\end{definition}

The data of a cylinder configurations marked by $x_1$ is equivalent to the choice of: (i)~a chain in ${\rm CH}(\mu)$,  (ii)~an order on the vertices in $F({\Gamma})$ connected to $v$ for all $v$ in $R({\Gamma})$, (iii)~an integer $1\leq a_e\leq m_e$ for all edges incident to a vertex in $F({\Gamma})$ or $P({\Gamma})$, (iv)~a choice of connected component of the space $\H(\mu(v))$ for all $v\in F(\Gamma)$ and $P(\Gamma)$. We use this fact to express the number $c_1(\mu)$ as a sum over chains. 

\begin{example}
On Figure~\ref{fig:chain}, we represented a chain for $\mu=(6,6,3,2,2,2,2)$. The vertices in $R(\Gamma), F(\Gamma),$ and $P(\Gamma)$ are of color green, red, and blue respectively.   Using the notation of~\cite{EskMasZor} (in particular the non-logarithmic convention), the configurations associated to this graph are of the form: 
\begin{eqnarray*}
\Rightarrow (\overline{\alpha_1+(1-\alpha_1)},1)\rightarrow (\overline{1},\overline{0},2,1)\Rightarrow (\overline{\alpha_2+(1-\alpha_2)},1)\rightarrow (\overline{0+0})\Rightarrow,\\
\text{or }\Rightarrow (\overline{\alpha_1+(1-\alpha_1)},1)\rightarrow (\overline{1},\overline{0},2,1)\Rightarrow (\overline{0+0})\rightarrow  (\overline{\alpha_2+(1-\alpha_2)},1) \Rightarrow,
\end{eqnarray*} for
$\alpha_1$ and $\alpha_2$ equal to 0 or 1. Here $\alpha_i=a_i-1$ in our system of notation. 
\begin{figure}  
\includegraphics[scale=0.3]{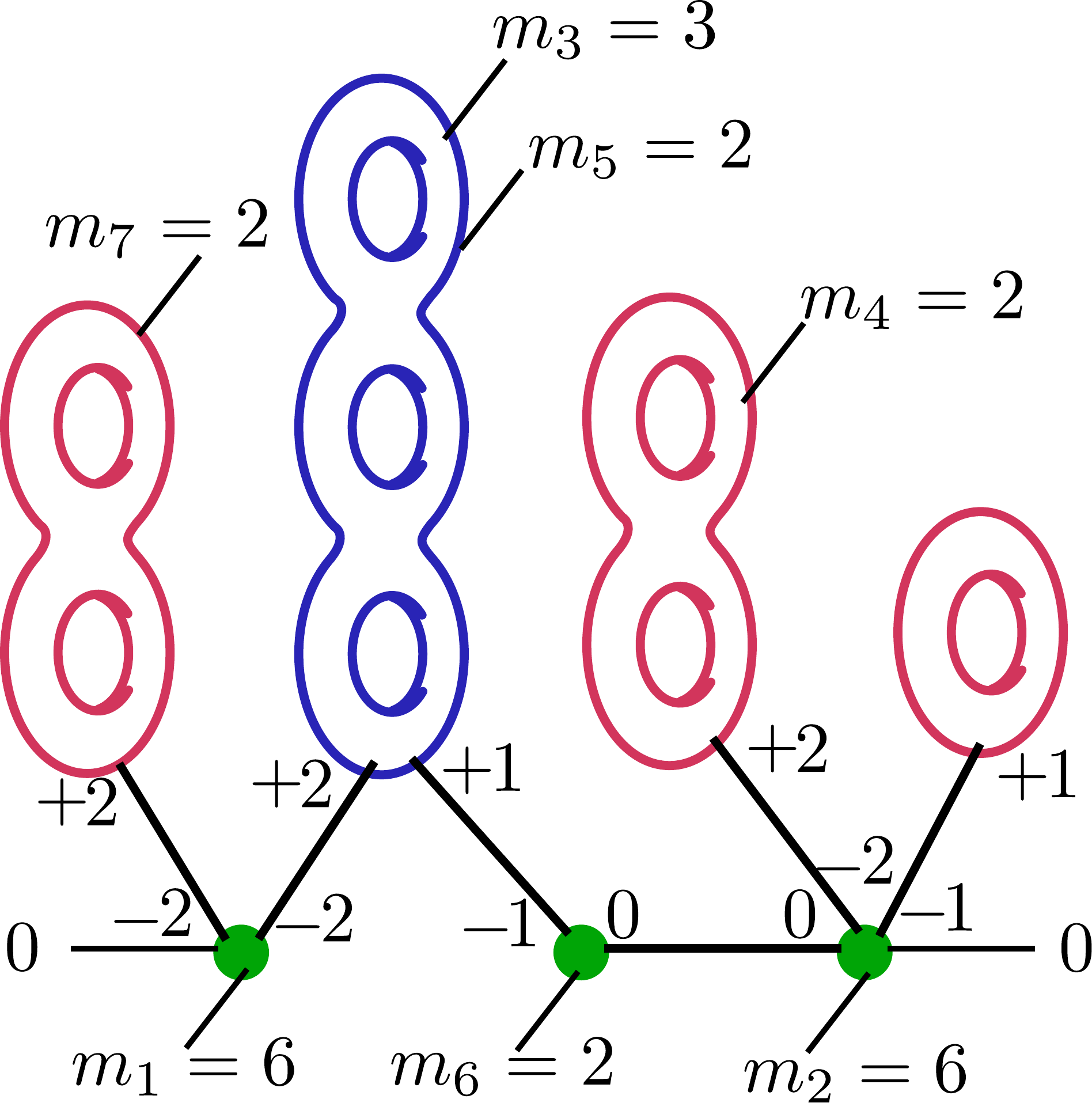}
\caption{Example of chain in ${\rm CH}(6,6,3,2,2,2,2)$.\label{fig:chain}}
\end{figure}
\end{example}
Let $\Gamma$ be a chain. We define the contribution of a vertex $v$ of $\Gamma$ 
according to the type of vertex:
\begin{itemize}[leftmargin=25pt]
\item If $v\in R(\Gamma)$, then $\widehat{c}(v) := m_i\,(n(v)-3)!,$ where $m_i$ is the marking of the unique leg incident to $v$;
\item If $v\in F(\Gamma)$, then $\widehat{c}(v):=m_e\,|\mu(v)|\, \v(\mu(v))$, where~$e$ is the unique edge incident to $v$;
\item If $v\in P(\Gamma)$, then $\widehat{c}(v):=|\mu(v)|\, \v(\mu(v)).$
\end{itemize}
Then, we define
$$
\widehat\cont(\Gamma) \defis \frac{1}{|{\rm Aut}(\Gamma)|}  \prod_{v\in V(\Gamma)} \widehat{c}(v).
$$
If $\mu$ is odd and $\Gamma$ is odd, then we define $\widehat{c^\spin}(v):=-m_i\,(n(v)-3)!$ for a vertex $v\in R(\Gamma)$, and $\widehat{c^\spin}(v)=-|\mu(v)|\, \v^\spin(\mu(v))$ for a vertex $v\in F(\Gamma)$, and we set:
$$
\widehat\cont^\spin(\Gamma)\defis\frac{1}{|{\rm Aut}(\Gamma)|}  \prod_{v\in V(\Gamma)} \widehat{c^\spin}(v).
$$

\begin{proposition}\label{pr:cchains}
The following identity holds 
\begin{eqnarray*}
c_1(\mu)&=& \frac{(-4\pi^2)^{-1}}{2\,m_1\, \v(\mu) }   \sum_{\Gamma \in {\rm CH}(\mu)} \widehat\cont(\Gamma).\\ 
\end{eqnarray*}
Moreover, if $\mu$ is odd, we have
\begin{eqnarray*}
c_1^\spin(\mu)&=& \frac{(-4\pi^2)^{-1}}{2\,m_1 \,\v(\mu) }   \sum_{\Gamma \in {\rm CH} (\mu)^\oddd} \widehat\cont^\spin(\Gamma).  
\end{eqnarray*}
\end{proposition}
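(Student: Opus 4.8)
The plan is to obtain both identities from the Eskin--Masur--Zorich Siegel--Veech formula, rewritten through the dictionary between cylinder configurations and chains recorded before the statement. First I would use \cite{EskMasZor} to express $c_1(\mu)$ as a weighted sum over the cylinder configurations of $\cH(\mu)$: by the definition of $\mathcal{N}(C,\eta,L)_1$, every cylinder $Z$ is counted with weight $\tfrac12 f_1(Z)$, and since $f_1(Z)$ is the number of sides of $Z$ bounded by $x_1$, this sum is exactly a half-weighted sum over \emph{cylinder configurations marked by $x_1$}. I would then invoke the equivalence stated just above the proposition---a marked configuration corresponds to the data of a chain $\Gamma\in{\rm CH}(\mu)$, an ordering of the figure eights at each rational vertex, integers $1\le a_e\le m_e$ for the edges at vertices of $F(\Gamma)\cup P(\Gamma)$, and a choice of connected component of $\H(\mu(v))$ at each $v\in F(\Gamma)\cup P(\Gamma)$---in order to group the configurations by their underlying chain. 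The heart of the argument is then to show that summing the Eskin--Masur--Zorich contribution over the auxiliary data (ii)--(iv) attached to a fixed $\Gamma$ reproduces $\widehat\cont(\Gamma)$, up to the global prefactor $\tfrac{(-4\pi^2)^{-1}}{2\,m_1\,\v(\mu)}$.

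Carrying this out amounts to a factor-by-factor identification. The complementary surfaces obtained by collapsing the parallel cylinders are precisely the figure-eight and pair-of-holes vertices, and they carry the Masur--Veech volumes, while the cylinders contribute width and gluing factors. I would check that each figure eight $v\in F(\Gamma)$ contributes $\v(\mu(v))$ together with a factor $|\mu(v)|$ from the passage between the area-one volume $\Vol$ and the normalized volume $\v$ of Notation~\ref{not}, and a factor $m_e$ from summing over the $m_e$ admissible values of $a_e$ in (iii); that each pair of holes $v\in P(\Gamma)$ contributes $|\mu(v)|\,\v(\mu(v))$ with no extra $m_e$, the two incident edges being constrained together; and that each rational vertex $v\in R(\Gamma)$ contributes the genus-zero factor $(n(v)-3)!$ from the $\psi$-intersection on $\oM_{0,n(v)}$ together with the twist $m_i$ of its unique leg in $[\![1,n]\!]$, recording the $x_1$-marking weight. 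Assembling these with the $\tfrac12$ from $\mathcal{N}(C,\eta,L)_1$, the marking normalization $1/m_1$, and the replacement of $\Vol(\cH(\mu))$ by $\v(\mu)$ in the denominator produces the stated prefactor; the minus sign and the powers of $\pi$ are accounted for by the $(2\pi\ii)$-normalizations of Notation~\ref{not}, since a chain parametrizes a configuration near the non-separating boundary $\delta_0$, its vertices carry total genus $g-1$, and the resulting sign $(-1)^{g-1}$ measured against the sign $(-1)^{g}$ of $\v(\mu)$ reconciles the negative prefactor with the positivity of $c_1$.

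For the spin refinement I would rerun the same count with each configuration weighted by its Arf sign $(-1)^{s}$, so that the volume at each figure eight is replaced by the signed volume $\v^\spin(\mu(v))$. A chain is of compact type, so by the behaviour of the spin parity under compact-type degeneration (\cite{Cor}, with the relevant local model in \cite{CosSauSch}) the parity of a nearby surface is additive over the components; this both yields the appearance of $\v^\spin$ at the figure eights and produces the per-vertex signs $-1$ recorded in $\widehat{c^\spin}$. The essential simplification is that pairs of holes disappear in the signed count: by the remark preceding the proposition each rational vertex already carries two even-twisted half-edges, so an edge to a pair of holes has even twist, and its two gluings of opposite parity cancel in $\v^\spin$; this matches the fact that $P(\Gamma)=\emptyset$ on odd chains. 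Restricting the sum to ${\rm CH}(\mu)^\oddd$ therefore gives $c_1^\spin(\mu)$.

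The hard part will be twofold. First, pinning down the exact Eskin--Masur--Zorich constants so that the local factors combine to $(n(v)-3)!$, $m_e\,|\mu(v)|\,\v(\mu(v))$ and $|\mu(v)|\,\v(\mu(v))$ with no spurious powers of $2$ or of the $m_i$, and so that the interplay between the marking weight $m_i$, the global $1/m_1$ and the $\tfrac12$ is correctly balanced. Second, and more delicate, is the spin bookkeeping: rigorously justifying the multiplicativity of the Arf invariant across the collapsed cylinders and the precise origin of the per-vertex signs, together with the cancellation at pairs of holes. This parity analysis is exactly where the argument departs from the unrefined computations of \cite{EskMasZor} and \cite{CheChe1}, and I expect it to be the main obstacle.
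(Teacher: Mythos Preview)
Your plan is the paper's plan: express $c_1(\mu)$ via the Eskin--Masur--Zorich cylinder-configuration formula, use the dictionary to chains, and collect the auxiliary data (ii)--(iv) into $\widehat\cont(\Gamma)$. Two points, however, need correction.

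First, the factor $(n(v)-3)!$ at a rational vertex does not come from a $\psi$-integral on $\oM_{0,n(v)}$. It is purely the count of the auxiliary data~(ii): a rational vertex in a chain carries one leg in $[\![1,n]\!]$ and two ``core'' half-edges, so the remaining $n(v)-3$ incident edges go to figure-eight vertices, and $(n(v)-3)!$ is the number of orderings of these. The Siegel--Veech constant of a single marked configuration (Formulas~13.1 and~14.4 of \cite{EskMasZor}) already contains the $m_i$ at each rational vertex and the $|\mu(v)|\,\v(X_v)$ at each decoration; the $(n(v)-3)!$ and the $m_e$ at figure-eight edges enter only when you multiply by the number of configurations sharing the same chain and components.

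Second, and more important, the spin bookkeeping does \emph{not} depart from \cite{EskMasZor}; it is already there. Lemmas~14.2 and~14.4 of \cite{EskMasZor} give the parity of a marked configuration as
\[
\#R(\Gamma)\;+\;\sum_{e\mapsto v\in F(\Gamma)} a_e\;+\;\sum_{v\in F(\Gamma)} \phi(X_v)\pmod 2,
\]
where $\phi(X_v)\in\{0,1\}$ records the parity of the chosen component. The citations to \cite{Cor} and \cite{CosSauSch} are not needed here. The crucial computation you omit is the signed $a_e$-sum: for $m_e$ even one has $\sum_{a_e=1}^{m_e}(-1)^{a_e}=0$, so any chain with an even figure-eight twist contributes trivially to $c_1^\spin$ (this is the cancellation, and it is \emph{not} specific to pair-of-holes edges); for $m_e$ odd one has $\sum_{a_e=1}^{m_e}(-1)^{a_e}=-1$, so the net contribution of the $a_e$-choices is $-1$ rather than $m_e$. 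This is exactly why $\widehat{c^\spin}(v)=-|\mu(v)|\,\v^\spin(\mu(v))$ for $v\in F(\Gamma)$ carries no $m_e$ and an extra sign, and why $\widehat{c^\spin}(v)=-m_i(n(v)-3)!$ for $v\in R(\Gamma)$ carries the sign coming from $(-1)^{\#R(\Gamma)}$. Without this $a_e$-analysis your spin identity does not close.
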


\begin{proof}
Let ${\Gamma}$ be a chain graph. We choose a connected component $X_v$ of $\H(\mu(v))$ for all vertices in $F(\Gamma)$ and $P(\Gamma)$. All the marked cylinder configurations with chain graph ${\Gamma}$ and with the same choices of $(X_v)$ have the same Siegel--Veech constant, given by
$$
 \frac{(-4\pi^2)^{-1}}{m_1\cdot \v(\mu) } \left(\prod_{ v\in R({\Gamma})} \!\!\! m_i \right)  \times  \left( \prod_{v\in F({\Gamma})} \!\!\!   |\mu(v)|\cdot\v(X_v)\right) \times  \left( \prod_{v\in P({\Gamma})} \!\!\!  |\mu(v)|\cdot\v(X_v)\right),
$$
where, in the first product $m_i$ is the twist of the unique leg incident to $v\in R(\Gamma)$ (see~\cite{EskMasZor}, Formulas 13.1 and 14.4). Besides, there are
$$
\frac{1}{|{\rm Aut}(\Gamma)|} \left(\prod_{ v\in R({\Gamma})} \!\!\!  (n(v)-3)! \right)  \times  \left( \prod_{e\mapsto v\in F({\Gamma})} \!\!\!  m_e \right)
$$
such configurations. Indeed the first product accounts for all choices of orders on the vertices of $F(\Gamma)$ connected to the vertices in $R(\Gamma)$, while the second product accounts for the choice of the $a_e$ for edges incident to the vertices in $F(\Gamma)$. Thus the first identity follows as $c_1(\mu)$ is the sum of the Siegel--Veech constants of all configurations. 

To obtain the second identity, we recall that if $\Gamma\in {\rm CH}(\mu)\setminus {\rm CH}(\mu)^\oddd$ then half of the choices of tuples $(a_e)_{e\mapsto v\in F(\Gamma)}$ contribute to the even or odd component (see~\cite{EskMasZor}, Lemma~14.4). Thus the contribution of the odd and even components compensate and  $\Gamma$ contributes trivially to $c_1^\spin(\mu)$. Besides, if $\Gamma \in {\rm CH}(\mu)^\oddd$, then by \cite[Lemma~14.2]{EskMasZor} the parity of the configuration is the parity of
$$
\#R(\Gamma) + \sum_{e\mapsto v\in F(\Gamma)} a_e  + \sum_{v\in F(\Gamma)} \phi(X_v)
$$ 
where $\phi(X_v)$ equals $0$ or $1$ if $X_v$ is an even or odd component respectively. Thus, for each edge $e$ incident to a vertex in $F(\Gamma)$, we have $(a_e-1)/2$ terms which give the same parity while the other $(a_e+1)/2$ produce the inverse parity, thus only one of these choices of $a_e$ contributes. 
  \end{proof}

\subsection{Expanded chains} 

Proposition~\ref{pr:cchains} provides an expression of $c_1(\mu)$ as a sum over chains in ${\rm CH}(\mu)$, while Proposition~\ref{pr:dint} above provides an expression of $d_1(\mu)$ as a sum over backbone graphs. To compare $c_1$ and $d_1$, we introduce a family of sets of graphs of rational type ${\rm ECH}(\mu)_i$, the {\em expanded chains of complexity~$i$ } for $i=1,\ldots, n$ (see Definition~\ref{def:levelechain} below). We {will} also construct maps between the different sets of graphs of rational type constructed until here:
$$
\xymatrix{
{\rm BB}(\mu)_0\simeq {\rm ECH}(\mu)_1 \overset{F_2}{\leftarrow} {\rm ECH}(\mu)_2 \overset{F_3}{\leftarrow} \ldots \overset{F_{n}}{\leftarrow}  \hspace{-27pt} &{\rm ECH}(\mu)_{n} \ar[d]_F\\
&{\rm CH}(\mu).
}
$$
These maps will be used to compare the different expressions of $d_1(\mu)$ by applying Lemmas~\ref{lem:indf} and~\ref{lem:indg} repeatedly. 

\begin{definition}
 A {\em pre-expanded chain} is a graph of rational type with $n+2$ legs such that there exists a partition $R(\Gamma)=C({\Gamma}) \sqcup L({\Gamma})$ satisfying the following constraints:
\begin{itemize}[leftmargin=25pt]
\item $m(n+1)=m(n+2)=0$, and the other legs have positive twists. Moreover, the first leg is incident to the same vertex as the $(n+1)$-st.
\item Let $(v_0,\ldots, v_k)$ be the shortest path from the vertex~$v_0$ with the leg $n+1$ to the vertex~$v_k$ with leg $n+2$. A vertex is in $C(\Gamma)$ (the {\em core}) if and only if it appears in this path. Thus we have an ordering on the vertices of the core. 
\item A vertex $v$ in $C(\Gamma)$ is called a {\em bottom} or a {\em top} if for all $v'$ in $C(\Gamma)$ connected to $v$, we have $v\leq v'$ or $v> v'$ respectively.  
\item All half-edges with vanishing twists are incident to {bottoms} of $C(\Gamma)$. {In particular, all horizontal edges are between two bottoms.}
\item Each vertex in $D({\Gamma})$ has exactly one edge. 
\item Each vertex in $L(\Gamma)$ (the set of {\em links}), has exactly one edge to a lower vertex (it may have any number of edges to upper vertices).
\item If $v$ is a vertex in $L(\Gamma)$ or a vertex in $C(\Gamma)$ which is not a top, then $v$ has at least one leg in $[\![1,n]\!]$.
\end{itemize}
\end{definition}

\begin{definition} Let $\Gamma$ be an almost expanded chain. For all vertices of $\Gamma$, we denote by $\ind(v)$ the minimum of the indices of the legs incident to $v$ and $+\infty$ if there are no legs incident to $v$. Let $v$ be a top in $C(\Gamma)$. It determines a unique subpath of the core $C(\Gamma)$: 
$$(v_{k_1}, v_{k_1+1},\ldots, v_N=v,v_{N+1},\ldots,v_{k_2})$$
 such that $v_{N}$ is the unique top of the sequence, and $v_{k_1}$ and $v_{k_2}$ are the only bottoms. We say that $v$ is {\em admissible} if 
the minimum of $\ind(v_j)$ for $j=k_1+1,\ldots, k_2$ is reached for $j=N+1$. An almost expanded chain is an {\em expanded chain} if all tops of $C(\Gamma)$ are admissible.
\end{definition}

\begin{definition}\label{def:levelechain} If $1\leq i\leq n$, then we denote by ${\rm ECH}(\mu)_i$ the set of \emph{expanded chains of complexity $i$}, i.e.\ the chains satisfying:
\begin{itemize}[leftmargin=25pt]
\item  for all $1\leq j\leq i$, we have: if the $j$-th leg is incident to a vertex $v$, then $v$ is not a top of $C(\Gamma)$ and  either $v$ in $D(\Gamma)$ or $j=\ind(v)$.
\item for all $v$ in $R(\Gamma)$, we have $\ind(v)\leq i$ or $v$ is a top of $C(\Gamma)$.
\end{itemize}
\end{definition}

To compare the different sets of graphs we define the functions $F_2,\ldots,F_{n}$ and $F$ (see the above diagram).

\subsubsection{{ Construction of the maps~$F_i$.}} Let $\Gamma$ be a graph in ${\rm ECH}(\mu)_i$. We construct the image of $\Gamma$ as follows:
\begin{itemize}[leftmargin=25pt]
\item If the $i$-th leg is incident to a vertex of $D(\Gamma)$, then $F_i(\Gamma)=\Gamma$.
\item If the $i$-th leg is incident to a vertex of $L(\Gamma)$, then $F_i(\Gamma)$ is obtained by contracting the unique edge to a lower vertex.
\item If the $i$-th leg is on a vertex $v_j$ of $C(\Gamma)$ and $v_{j-1}$ is not a top, then $F_i(\Gamma)$ is obtained by contracting the edge between $v_{j-1}$ and $v_{j}$; if $v_{j-1}$ is a top, we also contract the edge between $v_{j-2}$ and $v_{j-1}$.
\end{itemize}
{Note that $F_i(\Gamma)$ then satisfies the first condition of elements in ${\rm ECH}_{i-1}$ by the admissibility condition, and the second condition because if there is a vertex $v$ in $\Gamma$ with $\ind(v)=i$, then this vertex is merged with a vertex which has a leg of smaller index incident to it.}

\subsubsection{{ Construction of the map~$F$.}} Let $\Gamma$ be a graph in ${\rm ECH}(\mu)_n$. The image of $\Gamma$ is defined by contracting all edges which are not incident to at least one bottom vertex of $C(\Gamma)$, {where the genus of a merged vertex is the sum of the genera of the previous vertices}.

\begin{example}
In Figure~\cref{fig:echain}, we represented an example of an expanded chain in ${\rm ECH}(6,6,3,2,2,2,2)_i$ for $i=6$ or $7$ (for simplicity we did not put the twist at the edges as they may be computed from the twists at the legs). The vertices of $C(\Gamma)$ are black dots while the vertex in $L(\Gamma)$ is a white dot. Remark that the admissibility condition is satisfied, as the marking on the vertex following the unique top of $C(\Gamma)$ has index 3 which is smaller than 5 and 6.

Besides, this expanded chain is mapped to the chain of Example~\ref{fig:chain} under $F$.  We have surrounded in green, red, or blue the subgraphs that have to be contracted to obtain this chain. 
\begin{figure}  
\includegraphics[scale=0.3]{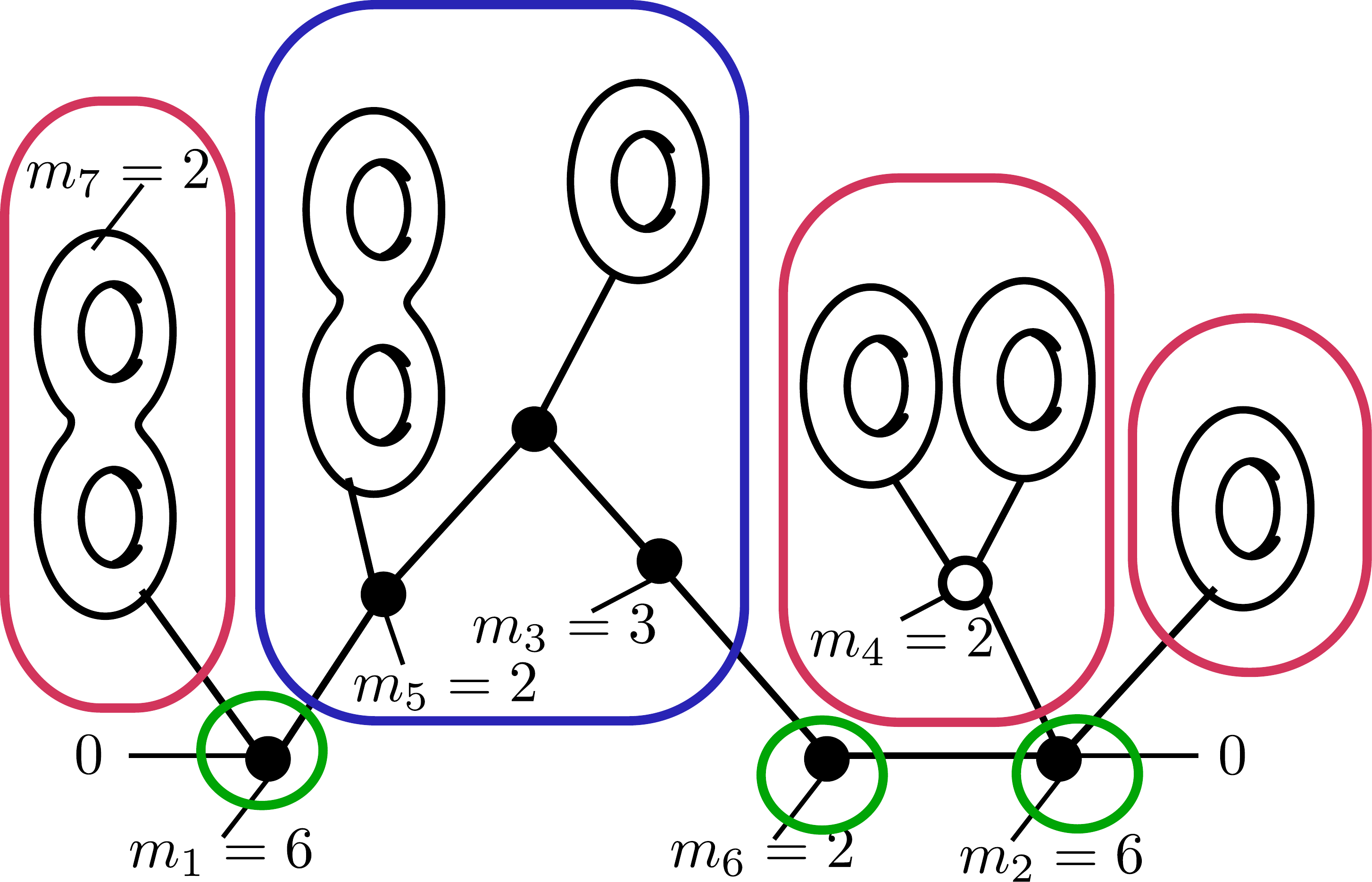}
\caption{An expanded chain in ${\rm ECH}(6,6,3,2,2,2,2)_i$ for $i=6$ or $7$.\label{fig:echain}}
\end{figure}
\end{example}

\subsubsection{Odd expanded chains} If $\mu$ is odd, then an expanded chain $\Gamma$ is {\em odd} if the all positive twists are odd. In particular, $C(\Gamma)$ contains only bottom vertices. Indeed, each vertex has an even number of incident half-edges with even twists, thus each vertex of the core has exactly 2 incident half-edges with even twists (the ones connecting to the previous and next vertex or the legs $n+1$ and $n+2$). As all of these twists are equal to 0, all vertices are bottom and all edges of the core are horizontal. The functions $F_i:{\rm ECH}^\oddd_{i}(\mu)\to {\rm ECH}^\oddd_{i-1}(\mu)$ and $F: {\rm ECH}^\oddd_{n}(\mu)\to {\rm CH}^\oddd(\mu)$ are defined by restricting the functions $F$ and $F_i$ to the sets of odd expanded chains.

\subsection{Contribution of expanded chains} 
Let $\Gamma$ be an expanded chain. The contribution~$c(v)$ of a vertex~$v$ of $\Gamma$ is defined according to the type of vertex:
\begin{itemize}[leftmargin=25pt]
\item If $v\in D(\Gamma)$, then $c(v):=m_e\, \v(\mu(v))$, where $m_e$ is the twist at the edge incident to $v$ and $\mu(v)$ is the vector of twists of half-edges incident to $v$.
\item If $v \in L(\Gamma)$, or if $v\in C(\Gamma)$ is neither a top nor a bottom, then we set $c(v):=m_{j}\, f(\mu(v))$, where $j=\ind(v)$ and $\mu(v) = (m_j,m_e,\ldots)$ with $e$ the unique edge to a lower vertex.
\item If $v$ is a top of $C(\Gamma)$, then $c(v):=(m_e+m_{e'})\, f(\mu(v))$ where $m_e$ and $m_{e'}$ are the twists at the two edges~$e$ and $e'$ to lower vertices, and $\mu(v) = (m_e,m_{e'},\ldots)$. 
\item If $v$ is a bottom of $C(\Gamma)$, then $c(v):=m_j \,\varphi(\mu(v))$, where $j=\ind(v)$, and $\mu(v)=(m_j,\ldots,m_e,m_{e'})$, where $e$ and $e'$ are the edges to the previous and next vertex in $C(\Gamma)$ if there is one, and else, $m_e=0$ or $m_{e'}=0$ respectively. 
\end{itemize}
Then, 
we define
$$
\cont(\Gamma)\=\frac{1}{|{\rm Aut}(\Gamma)|}  \prod_{v\in V(\Gamma)} c(v).
$$
Moreover, if $\mu$ is odd and $\Gamma$ is an odd expanded chain, and $v$ is a vertex of $\Gamma$, then we define $c^\spin(v)$ and $\cont^\spin(\Gamma)$ 
by replacing the function $\v$ by $\v^{\spin}$ and the function $\varphi$ by $\varphi^\spin$ in the definition of $c(v)$ and $\cont(\Gamma)$.
\begin{proposition}\label{pr:echech}
Let $1< i\leq n$ and let $\Gamma\in {\rm ECH}(\mu)_{i-1}$. We have
$$\cont(\Gamma)\=\sum_{\Gamma' \in F_{i}^{-1}\{\Gamma\}}  \cont(\Gamma').$$
Besides, if $\Gamma$ is odd, then
$$
\cont^\spin(\Gamma)\=\sum_{\Gamma' \in F_{i}^{-1}\{\Gamma\}}  \cont^\spin(\Gamma'),
$$
where in the last sum we restrict $F_i$ to the sets of \emph{odd} expanded chains.
\end{proposition}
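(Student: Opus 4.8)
The plan is to fix $\Gamma\in{\rm ECH}(\mu)_{i-1}$, locate the unique vertex $w$ carrying the $i$-th leg, and read off the fiber $F_i^{-1}\{\Gamma\}$ as the set of ways to \emph{expand} $w$ — that is, to reverse the edge-contractions defining $F_i$. Since $\cont$ is the product of the local contributions $c(v)$ divided by $|{\rm Aut}(\Gamma)|$, and since $F_i$ leaves every vertex other than those created by the expansion untouched, the identity reduces to a purely local statement at $w$: the factor $c(w)$ (together with the contributions of the core-neighbours involved in the contraction) must equal the sum, over the expansions of $w$, of the product of the local contributions of the newly created vertices. This local statement is precisely one of the recursion formulas already at our disposal, the relevant one depending on the type of $w$.

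First I would run the case analysis on the type of $w$, noting at the outset that $w$ is never a top of the core: in any $\Gamma'\in{\rm ECH}(\mu)_i$ the $i$-th leg avoids tops, and the contractions defining $F_i$ always deposit it on a bottom. If $w\in D(\Gamma)$ is a decoration, then $\Gamma$ already lies in ${\rm ECH}(\mu)_i$, the fiber $F_i^{-1}\{\Gamma\}$ is the singleton $\{\Gamma\}$, and there is nothing to prove. If $w$ is a link or an interior (neither top nor bottom) vertex of the core, then $c(w)=m_{\ind(w)}\,f(\mu(w))$ and the expansions contracting back to $w$ are exactly the splittings encoded by the graphs of type $f_i$ in Lemma~\ref{lem:indf}, the prefactor $m_{\ind(w)}$ matching the leg peeled by the recursion. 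If $w$ is a bottom of the core, then $c(w)=m_{\ind(w)}\,\varphi(\mu(w))$ and the four expansion patterns are indexed precisely by the graph types $\varphi_1,\varphi_1',\varphi_2,\varphi_3$ of Lemma~\ref{lem:indg}: types $\varphi_1,\varphi_1'$ split off an upper $f$-vertex carrying leg $i$, type $\varphi_2$ lengthens the core by a horizontal edge, and type $\varphi_3$ reintroduces a top between two bottoms. In every case I would verify that the edge twists $m_e$, the weight $m_e+m_{e'}$ attached to a reintroduced top, and the automorphism factors bookkeep correctly, so that the chosen recursion reproduces $\cont(\Gamma)=\sum_{\Gamma'\in F_i^{-1}\{\Gamma\}}\cont(\Gamma')$ term by term.

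The delicate points are twofold. The expansion must land inside ${\rm ECH}(\mu)_i$, not merely produce some graph of rational type; here the admissibility condition on tops is exactly what prunes the naive list of splittings down to those appearing in the recursion, while the complexity-$i$ conditions are guaranteed by the construction of $F_i$ (as recorded in the remark immediately following its definition). I expect the genuine obstacle to be the bottom case feeding the $\varphi_3$ term: reintroducing a top $v_0$ joined to two bottoms by edges $e_1,e_2$ carries the weight $m_{e_1}+m_{e_2}$, and matching this against the recursion requires the symmetry between the two lower branches, namely the freedom in which branch carries the peeled leg. This is exactly what the exchange identity of Lemma~\ref{lem:exch0} provides, so I would invoke it to reconcile the $\varphi_3$-contributions arising from the two choices.

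Finally, for the spin statement I would specialise to odd $\mu$ and odd expanded chains. As observed after the definition of odd chains, the core of an odd chain consists only of bottom vertices joined by horizontal edges, so no top or interior core vertex occurs and the only relevant expansions are of types $\varphi_1$ and $\varphi_2$. Consequently the $\varphi_3$ term — and with it the use of Lemma~\ref{lem:exch0} — disappears, and the argument collapses to the spin recursion $\varphi^\spin(\mu)=m_2\sum_{\Gamma\vdash\varphi_1,\varphi_2}\varphi^\spin(\Gamma)$ of Lemma~\ref{lem:indg} together with the $f$-recursion for links, now with $\v$ replaced by $\v^\spin$ and $\varphi$ by $\varphi^\spin$ throughout. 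Since these replacements are exactly the ones defining $\cont^\spin$, the same bijection between fibers and recursion terms yields the spin identity.
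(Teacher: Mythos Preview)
Your approach is essentially the paper's: locate the vertex $w$ carrying leg~$i$, observe that the fiber $F_i^{-1}\{\Gamma\}$ consists of local expansions of $w$, and match these expansions case by case against the recursions of Lemmas~\ref{lem:indf} and~\ref{lem:indg}. The case distinctions (decoration, link, interior core vertex, bottom) and the observation that $w$ cannot be a top are exactly what the paper does.

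The one point where you diverge is mistaken rather than alternative. You anticipate a ``genuine obstacle'' in the $\varphi_3$ case --- an ambiguity in which of the two lower branches carries the peeled leg~$i$ --- and propose to resolve it via Lemma~\ref{lem:exch0}. In fact no such ambiguity exists. The map $F_i$ is asymmetric by construction: when leg~$i$ sits on a core vertex $v_j$, one always contracts towards the \emph{previous} vertex $v_{j-1}$ (and through the top if $v_{j-1}$ happens to be one). Consequently, in any $\Gamma'\in F_i^{-1}\{\Gamma\}$ arising from a $\varphi_3$-type splitting, leg~$i$ necessarily sits on the vertex \emph{after} the newly created top, and leg~$\ind(v)$ on the vertex before it. This is precisely the orientation in the definition of type~$\varphi_3$, so Lemma~\ref{lem:indg} enumerates the fiber directly, with the weight $m_{e_1}+m_{e_2}$ matching the contribution $c(v_0)$ of the top. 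The admissibility condition is then automatic, since every other leg on the top has index $>i$.

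Lemma~\ref{lem:exch0} is not used here at all; the paper deploys it later, in the proof of Lemma~\ref{lem:eph}, to show that the sum over expanded pairs of holes is independent of the parameter~$I$. So your plan is sound once you drop the detour through Lemma~\ref{lem:exch0}; carrying it out will simply confirm that the $\varphi_3$ bookkeeping is already built into Lemma~\ref{lem:indg}.
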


\begin{proof}
Let $i>1$ and $\Gamma\in {\rm ECH}(\mu)_{i-1}$. Any graph in $F_i^{-1}\{\Gamma\}$ is obtained by modifying the vertex~$v$ carrying the leg $i$ and not the others. Thus we show that the proposition holds by studying each possible type of vertex for $v$. 

If $v\in D(\Gamma)$, then $\Gamma\in {\rm ECH}(\mu)_{i}$ 
and $F_i^{-1}\{\Gamma\}=\{\Gamma\}.$
Thus, the proposition holds trivially.

If $v$ is in $L(\Gamma)$, then $c(v)$ is given in terms of the function~$f$.  Using Lemma~\ref{lem:indf}, we may write this function as a sum over all twisted graphs of type~$f_2$, i.e.\ as all possible ways to split $v$ into 2 vertices (leaving $\ind(v)$ and the edge towards a lower vertex together on the lowest of the created vertices while $i$ is on the upper one). All the graphs of $F_i^{-1}\{\Gamma\}$ are obtained in this way, and we may check that the induction formula of Lemma~\ref{lem:indf} gives the right contribution of any element of $F_i^{-1}\{\Gamma\}$. 

Therefore from now on, we assume that $v$ is in $C(\Gamma)$. First, let us remark that $v$ cannot be a top of $C(\Gamma)$. Namely, $v$ cannot be the result of contracting the edge between a top $v_j$ and a vertex $v_{j-1}$, as that would violate the first condition of expanded chains of complexity~$i$. Moreover, if $v$ is obtained by contracting the edge between a vertex~$v_j$, a top $v_{j-1}$ and a vertex $v_{j-2}$, by the admissibility condition, we know that $v$ is a bottom (and not a top).

If $v$ is neither a top nor a bottom, then its contribution is given by the function~$f$. Then we apply the recursion of Lemma~\ref{lem:indf} and we write $f$ as a sum over graphs of type~$f_h$ where $h$ is the half-edge of the edge that connects $v$ to the previous vertex of $C(\Gamma)$. Either we obtain two vertices in $C(\Gamma)$, where the closest to $v_0$ carries the leg~$\ind(v)$ while the other carries the leg $i$; or a vertex in $C(\Gamma)$ with the leg $\ind(v)$ and a vertex in $L(\Gamma)$ carrying the leg $i$. All the graphs of $F_i^{-1}\{\Gamma\}$ are obtained in this way, and we may check that the induction formula of Lemma~\ref{lem:indf} gives the right contribution of any element of $F_i^{-1}\{\Gamma\}$.

If $v$ is a bottom in $C(\Gamma)$, then its contribution is given by the function~$\varphi$. Any graph in $F_i^{-1}\{\Gamma\}$ is obtained by splitting $v$ into 2 or 3 vertices of type~$\varphi_1, \varphi_1', \varphi_2,$ or $\varphi_3$. Then the induction formula of Lemma~\ref{lem:indg} shows that the contribution of $\Gamma$ may be written as the sum of the contributions of $F_i^{-1}\{\Gamma\}$ as in the previous case. \end{proof}

\subsection{Contribution of chains}
The purpose of this section is to show the following identities.
\begin{proposition}\label{pr:echch} 
Let $\Gamma\in {\rm CH}(\mu)$. We have: 
$$\widehat\cont(\Gamma)=\sum_{\Gamma' \in F^{-1}\{\Gamma\}}  \cont(\Gamma'), $$
and if $\Gamma$ is odd, then
$$
\widehat\cont^\spin(\Gamma)=\sum_{\Gamma' \in F^{-1}\{\Gamma\}}  \cont^\spin(\Gamma').
$$
\end{proposition}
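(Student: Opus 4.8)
The plan is to analyze the fibers of the contraction map $F:\mathrm{ECH}(\mu)_n\to\mathrm{CH}(\mu)$ directly and to reduce the asserted identity to a product of local identities, one for each vertex of the chain $\Gamma$, each of which is then established by running the recursion lemmas of the previous sections ``in reverse.''

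First I would make the fiber $F^{-1}\{\Gamma\}$ explicit. Recall that $F$ contracts every edge not incident to a bottom of the core. Tracing through this, the image vertices are: (i) each bottom of $C(\Gamma')$, which survives untouched and becomes a rational vertex of $R(\Gamma)$; (ii) each maximal ``mountain'' of non-bottom core vertices sitting between two consecutive bottoms, which collapses to a single vertex with exactly two incident edges, i.e.\ a pair of holes in $P(\Gamma)$ (a direct horizontal edge between two bottoms instead survives and stays horizontal in $\Gamma$); and (iii) each rational tail of links and decorations hanging above a bottom, which collapses to a single decorated vertex with one incident edge, i.e.\ a figure eight in $F(\Gamma)$. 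Conversely, an element of $F^{-1}\{\Gamma\}$ is exactly the data of an independent such ``expansion'' at each figure eight, at each pair of holes, and along the backbone of $\Gamma$. Since $F$ contracts a collection of vertex-disjoint connected subgraphs and both $\widehat\cont$ and $\cont$ are products over vertices divided by the order of the automorphism group, I would first check that the edge-twist and automorphism factors factor correctly across these subgraphs, reducing the statement to a local identity at each vertex type of $\Gamma$.

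The local identities then follow from the recursion lemmas. For a figure eight (resp.\ pair of holes) $w$ with decorated data $\mu(w)$, its preimages are the rational tails (resp.\ mountains) above a bottom; summing $\prod_v c(v)$ over these reproduces the weighted volume $m_e\,|\mu(w)|\,\v(\mu(w))$ (resp.\ $|\mu(w)|\,\v(\mu(w))$) by iterating the volume recursion of Proposition~\ref{pr:volint} to peel off the intermediate rational ($f$-)vertices, with the factor $|\mu(w)|$ produced by the sum over the attaching datum. For a backbone vertex of $R(\Gamma)$ the governing function is $\varphi$: expanding the bottom's contribution $m_j\,\varphi(\mu(b))$ by Lemma~\ref{lem:indg} generates precisely the three decoration types that occur in the chain, where type~$\varphi_1$ grows a figure eight (an $f$-tail / link), type~$\varphi_2$ produces a horizontal edge to the neighbouring bottom, and type~$\varphi_3$ produces a pair of holes; together with the $f$-recursion of Lemma~\ref{lem:indf} this matches the combinatorial weight $m_i\,(n(w)-3)!$ and the contributions of the adjacent decorations. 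The exchange identity of Lemma~\ref{lem:exch0} is needed to make the two directions along the core symmetric and, in combination with the admissibility condition built into $\mathrm{ECH}(\mu)_n$, to guarantee that each top of the core and each minimal-index leg is counted exactly once.

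For the spin statement I would use that an odd expanded chain has a core consisting only of bottoms joined by horizontal edges, so that no mountains---and hence no pairs of holes---occur; correspondingly only the types $\varphi_1$ and $\varphi_2$ appear in the $\varphi^\spin$-recursion of Lemma~\ref{lem:indg}, and the type~$\varphi_3$ term (the only place the even-genus volume recursion would enter) drops out. The signs are tracked through the base case $\varphi^\spin=-(n-1)!$ and the definition of $\widehat{c^\spin}$, which carries a factor $-1$ at $R$- and $F$-vertices. I expect the main obstacle to be purely bookkeeping: reconciling the factorials $(n(v)-3)!$ coming from orderings of figure eights, the products $\prod_e m_e$ of edge twists, the volume-weight factors $|\mu(v)|$, and above all the ratio $|\mathrm{Aut}(\Gamma)|/|\mathrm{Aut}(\Gamma')|$, so that the local identities multiply together to the global one. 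Pinning down the origin of the factor $|\mu(w)|$ for decorations, and checking that it is compatible with the volume recursion rather than over- or under-counting the attaching data, is the delicate point.
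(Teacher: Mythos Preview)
Your overall strategy---factor the fibre of $F$ into local pieces over the vertices of~$\Gamma$ and prove a local identity for each piece---is exactly the paper's approach. The paper packages the local pieces as \emph{rooted trees} (the rational tails that collapse to a figure eight) and \emph{expanded pairs of holes} (the mountains that collapse to a $P$-vertex), and the two local identities are Lemmas~\ref{lem:rt} and~\ref{lem:eph}. Your reading of the spin case is also correct.

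There is, however, a genuine confusion in your treatment of the $R(\Gamma)$-vertices. The map~$F$ contracts only edges \emph{not} incident to a bottom, so each bottom~$b$ of the expanded chain survives unchanged as a vertex of~$R(\Gamma)$; nothing is expanded or collapsed there. In $\mathrm{ECH}(\mu)_n$ the bottom carries a single leg in~$[\![1,n]\!]$, so $\varphi(\mu(b))$ is already in the \emph{base case} of Lemma~\ref{lem:indg}: one has $m_i\,\varphi(\mu(b))=m_i\,(n(b)-3)!\prod_e m_e$, where the product is over the edges from~$b$ towards figure eights. This matches $\widehat c(b)=m_i\,(n(b)-3)!$ up to the factors $m_e$, which are absorbed into the figure-eight contributions (this is the bookkeeping you flagged). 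The recursive step of Lemma~\ref{lem:indg} with types $\varphi_1,\varphi_1',\varphi_2,\varphi_3$ is used for the maps~$F_i$ in Proposition~\ref{pr:echech}, not for~$F$; invoking it here is misplaced.

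Conversely, you underestimate the pair-of-holes case. It is \emph{not} a direct iteration of Proposition~\ref{pr:volint}: the mountains carry a distinguished top and an admissibility condition tied to the index~$I$ of the leg on the neighbouring bottom, and one must show the sum is independent of this~$I$. The paper isolates this as Lemma~\ref{lem:eph} and proves it in two steps: the case $I=0$ does reduce (via rooted trees and Proposition~\ref{pr:volint}) to the volume recursion, but the general case requires a ``crossing'' argument that moves the top past the leg~$i$ one step at a time using the exchange identity of Lemma~\ref{lem:exch0}. Your proposal gestures at Lemma~\ref{lem:exch0} but attaches it to the bottoms rather than to the mountains; to make your outline work, you need to formulate and prove precisely this $I$-independence statement for the expanded-pair-of-holes subgraphs.
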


To prove these identities we need two extra families of combinatorial objects, called rooted trees and expanded pairs of holes. 
\begin{definition} Let $\Sigma \subset [\![1,n]\!]$ and $p$ be a positive integer. A {\em rooted tree} is a graph of rational type with $1+|\Sigma|$ legs {indexed by $\{r\}\cup \Sigma$}, which is either the trivial graph (i.e., the stable graph with one vertex and no edges) or is such that:
\begin{itemize}[leftmargin=25pt]
\item No edge is horizontal.
\item Each vertex in $D({\Gamma})$ has exactly one incident edge.
\item {The vertex with the leg~$r$ is called the \emph{root}. The root has $2$ legs and no edges to lower vertices. All other vertices of $R(\Gamma)$ have $1$ leg and $1$ edge to a lower vertex (they may have any number of edges to upper vertices)}.
\end{itemize}
We denote by ${\rm RT}(\mu,\Sigma,p)$ the rooted trees compatible with $(p)+(m_i)_{i\in \Sigma}$.
\end{definition}

\begin{definition} Let $\Sigma\subset [\![2,n]\!]$, $p_1, p_2$ be positive integers and $I\in \mathbb{R} \setminus \Sigma $.
A {\em pre-expanded pair of holes} is a graph of rational type~$\Gamma$ with legs indexed by the set $\Sigma\cup \{h_1,h_2\}$, and with a partition $R(\Gamma)=C(\Gamma)\sqcup L(\Gamma)$   satisfying:
\begin{itemize}[leftmargin=25pt]
\item All legs have positive twists and there are no horizontal edges.
\item Let $(v_0,\ldots, v_k)$ be the shortest path from the vertex with the leg $h_1$ to the vertex with the leg $h_2$. A vertex is in $C(\Gamma)$ if and only if it appears in this path. 
\item  There is exactly one top in $C(\Gamma)$, i.e.\ there is precisely one vertex $v$ whose edges to other vertices of $v'\in C(\Gamma)$ satisfy $v>v'$. No leg in $\Sigma$ is incident to the top.
\item Each vertex in $D({\Gamma})$ has exactly one edge. 
\item Each vertex in $L(\Gamma)$ (the set of {\em links}), has exactly one edge to a lower vertex (it may have any number of edges to upper vertices).
\item If $v$ is a vertex in $L(\Gamma)$ or a vertex in $C(\Gamma)$ which is not the top, then $v$ has exactly one leg in $\Sigma$.
\end{itemize}
It is an {\em expanded pair of holes} in ${\rm EP}(\mu,\Sigma,p_1,p_2,I)$ if it is compatible with $(m_i)_{i\in \Sigma}+(p_1,p_2)$, and the following \emph{admissibility condition} holds: 
\begin{itemize}[leftmargin=25pt]
\item Either the leg {$h_2$} is incident to the top vertex (i.e.\ the top is the last vertex of the core), and $I$ is smaller than all legs incident to vertices in $C(\Gamma)$;
\item Or the smallest index~$j$ of a leg incident to a vertex of $C(\Gamma)$ is smaller than $I$ and is incident to the vertex directly after the top.
\end{itemize}
\end{definition} 

\begin{example}
In Figure~\ref{fig:echain}, examples of rooted trees or expanded pairs of holes are given by the subgraphs surrounded by red or blue lines respectively.
\end{example}

If $\Gamma$ is a rooted tree or an expanded pair of holes, and $v$ is a vertex of $\Gamma$, then the contribution~$c(v)$ of~$v$ is defined as for expanded chains. Here, for rooted trees, we set $R(\Gamma)=L(\Gamma)$ and identify $m_e$ with $p$ for the root. Also, for expanded pair of holes, we identify $p_1$ and $p_2$ with the twists $m_e$ and $m_{e'}$ at the first and last vertices of the core respectively.
 Then we define $\cont(\Gamma)$ as $|{\rm Aut}(\Gamma)|^{-1} \prod_v c(v)$.  If $\mu$ is odd and $\Gamma$ is a rooted tree then we define $\cont^\spin(\Gamma)$ in the same way.

\begin{lemma}[{\cite[Section~3.5 and 6.1]{CheMoeSauZag}}]  \label{lem:rt}
For all positive integers $p$ and $\Sigma \subset [\![2,n]\!]$, we have
$$
\left(p+ \sum_{i\in \Sigma} m_i\right)  \v\left((p)+(m_i)_{i\in \Sigma}\right) \= \!\!\!\!\!\!\!  \sum_{\Gamma \in {\rm RT}(\mu,\Sigma,p)}\!\!  \cont(\Gamma),
$$
and if $\mu$ is odd, then we have:
$$
 \left(p+ \sum_{i\in \Sigma} m_i\right) \v^\spin\left((p)+(m_i)_{i\in \Sigma}\right)\=\!\!\!\!\!\!\! \sum_{\Gamma \in {\rm RT}(\mu,\Sigma,p)} \!\! \cont^\spin(\Gamma).
$$
\end{lemma}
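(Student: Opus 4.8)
The plan is to prove Lemma~\ref{lem:rt} by iterating the two recursions already at our disposal: the volume recursion of Proposition~\ref{pr:volint} and the splitting recursion for the genus-$0$ function $f$ of Lemma~\ref{lem:indf}. This is essentially the content of \cite[\S\S3.5, 6.1]{CheMoeSauZag}, so the genuinely new point is to reconcile the iteration with the normalizations used here (recall that our $f$ carries an extra factor $\prod m_i$ and that $\v$ is normalized as in Notation~\ref{not}). I would argue by induction on $|\Sigma|$. The base case $\Sigma=\emptyset$ is the trivial rooted tree, whose single positive-genus vertex is treated as a decoration with the convention $m_e=p$, so that both sides reduce to $p\,\v((p))$.

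For the inductive step I would apply Proposition~\ref{pr:volint} to $\v\bigl((p)+(m_i)_{i\in\Sigma}\bigr)$, carrying the global prefactor $|\,(p)+(m_i)_{i\in\Sigma}\,|$ along. This writes the volume as a sum over backbone graphs consisting of a single genus-$0$ bottom vertex $v_{-1}$ carrying the first leg $r$ and contributing $f(\mu(v_{-1}))$, together with the positive-genus tops, each contributing a factor $\v(\mu(v))$ and playing the role of the decorations $D(\Gamma)$. The key observation is that the half-edges of $v_{-1}$ — both the remaining legs in $\Sigma$ and the edges to the decorations — can be redistributed by expanding the single factor $f(\mu(v_{-1}))$ through repeated use of Lemma~\ref{lem:indf}: each application of the recursion $f(\mu)=m_3\sum_{\Gamma\vdash f_i}f(\Gamma)$ splits one genus-$0$ vertex into two, moving one leg (and whichever decorations ride along on the separated half-edges) up one level. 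Carrying this out until every genus-$0$ vertex other than the root carries a single leg produces exactly the tree of link vertices $L(\Gamma)$ of a rooted tree, with the decorations hanging in the correct positions. Matching the two sides then amounts to checking that the factor $m_3$ of Lemma~\ref{lem:indf} supplies the prefactor $m_{\ind(v)}$ in the link contribution $c(v)=m_{\ind(v)}f(\mu(v))$, that the product $\prod_e m_e$ of Proposition~\ref{pr:volint} distributes as the factor $m_e$ in each decoration contribution $c(v)=m_e\,\v(\mu(v))$, and that the automorphism weights $1/|{\rm Aut}(\Gamma)|$ of the intermediate graphs combine correctly.

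The step I expect to be the main obstacle is precisely this bookkeeping: ensuring that the iterated splitting neither over- nor under-counts, so that each rooted tree arises with total weight $1/|{\rm Aut}(\Gamma)|$ once the orderings implicit in the $f$-recursion are accounted for, and — most delicately — tracking the global prefactor $p+\sum_{i\in\Sigma}m_i$. This factor is absorbed into the root via the convention identifying the missing lower edge-twist of the root with $p$, so that the root contributes $m_{\ind}\,f$ with $\mu(\mathrm{root})=(m_{\ind},p,\dots)$; verifying that this is consistent both with the base case and with each splitting step is where the argument must be done carefully rather than schematically.

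Finally, the spin identity follows by the identical induction, replacing $\v$ by $\v^\spin$ throughout and using the spin volume recursion (the second formula of Proposition~\ref{pr:volint}); the genus-$0$ function $f$ has no spin refinement and is left unchanged, which is consistent because the decorations are the only positive-genus vertices and hence the only places where $\v^\spin$ enters. Since a rooted tree is of compact type, the parity of a generic degenerate differential is the product of the parities at the vertices (as already used in Proposition~\ref{pr:dint} via \cite{Cor}), so the spin-weighted recursion closes in exactly the same way as the unweighted one.
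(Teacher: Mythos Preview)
The paper does not supply a proof of this lemma at all: it is stated with the citation \cite[Section~3.5 and 6.1]{CheMoeSauZag} and used as a black box (notably inside the proof of Lemma~\ref{lem:eph}). So there is nothing in the present paper to compare your argument against; what follows is an assessment of your sketch on its own merits.

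Your overall strategy --- induct on $|\Sigma|$, peel off the root, and recognise the hanging subtrees as smaller rooted trees --- is exactly right, and your treatment of the base case $\Sigma=\emptyset$ is correct. The spin paragraph is also fine: compact type plus the spin line of Proposition~\ref{pr:volint} is all that is needed.

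Where your sketch does not close is precisely the point you flag yourself: the global prefactor $p+\sum_{i\in\Sigma}m_i$. You propose to apply Proposition~\ref{pr:volint} once and then ``absorb'' the prefactor into the root via the convention $m_e=p$. This does not work as stated. Proposition~\ref{pr:volint} produces $\v(\mu)$ with decoration weights $\prod_e m_e\cdot\prod_v\v(\mu(v))$, whereas what the induction actually needs is the variant in which each decoration contributes $|\mu(v)|\,\v(\mu(v))$. That variant is Proposition~3.11 of~\cite{CheMoeSauZag}, and it is exactly the identity invoked later in the proof of Lemma~\ref{lem:eph}. Once you have it, the induction closes transparently: the trivial rooted tree contributes $p\,\v(\mu)$; for each $j\in\Sigma$ the non-trivial trees with $\ind(\mathrm{root})=j$ contribute $m_j$ times a sum which, after applying the induction hypothesis to every subtree, is precisely the right-hand side of Proposition~3.11 and hence equals $\v(\mu)$. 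Summing over the trivial tree and over $j\in\Sigma$ gives $(p+\sum_j m_j)\,\v(\mu)$.

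If you want to stay with the tools actually stated in this paper (Proposition~\ref{pr:volint} and Lemma~\ref{lem:indf}) rather than import Proposition~3.11, you can, but then you must first derive the latter from the former, and your iterated-splitting paragraph is not yet that derivation: iterating Lemma~\ref{lem:indf} on $f(\mu(v_{-1}))$ correctly manufactures the link-vertex factors $m_{\ind(v)}f(\mu(v))$, but it leaves the decoration factors as $m_e\,\v(\mu(v))$, not $|\mu(v)|\,\v(\mu(v))$, and a single factor $m_{\ind(\mathrm{root})}$ at the root is not the same as $p+\sum_i m_i$.
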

We will show the following analogue result for expanded pair of holes.
\begin{lemma}\label{lem:eph}
For all positive integers $p_1,p_2$, $\Sigma\subset [\![2,n]\!]$ and $I\in \RR\setminus \Sigma$,  we have:
$$
\left(p_1+p_2+ \sum_{i\in \Sigma} m_i\right)  \v\left((m_i)_{i\in \Sigma}+(p_1,p_2)\right)=   \!\!\!\!\!\!\!  \sum_{\Gamma \in {\rm EP}(\mu,\Sigma,p_1,p_2,I)} \!\! \cont(\Gamma).
$$
\end{lemma}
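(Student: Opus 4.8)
The plan is to prove this statement by induction on $|\Sigma|$, in close parallel with the proof of the rooted-tree identity of Lemma~\ref{lem:rt}, of which it is the two-hole analogue. The left-hand side $(p_1+p_2+\sum_{i\in\Sigma}m_i)\,\v((m_i)_{i\in\Sigma}+(p_1,p_2))$ is exactly the quantity $\widehat{c}(v)=|\mu(v)|\,\v(\mu(v))$ attached to a single pair-of-holes vertex in a chain, so the content of the lemma is that expanding such a vertex into all admissible tent-shaped graphs reproduces its contribution. First I would treat the base case $\Sigma=\emptyset$: here no intermediate core or link vertex can occur (each such vertex must carry a leg of $\Sigma$), so the unique admissible expanded pair of holes is a short path whose two endpoints carry $h_1,h_2$ and flow into the two $\varphi$-vertices, and the identity reduces to the base cases of $\v$, $f$, and $\varphi$ recorded in Lemma~\ref{lem:indg}, weighted by the product of edge twists.

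For the inductive step I would expand $\v((m_i)_{i\in\Sigma}+(p_1,p_2))$ by the volume recursion of Proposition~\ref{pr:volint}, choosing the distinguished bottom vertex to carry one of the two holes. Iterating this recursion peels off, level by level, the vertices of the core together with the rooted trees hanging from them: the rooted subtrees are resummed by Lemma~\ref{lem:rt} to yield the $D$- and $L$-contributions $m_e\,\v(\mu(v))$ and $m_j\,f(\mu(v))$, while the $f$-recursion of Lemma~\ref{lem:indf} splits each intermediate core vertex, producing the descending chains of $f$-vertices on the two sides of the tent. The unique top vertex, carrying the combined factor $(m_e+m_{e'})$, and the two extremal $\varphi$-vertices into which the holes flow are produced by the $\varphi$-recursion of Lemma~\ref{lem:indg}, whose types $\varphi_1,\varphi_1',\varphi_2,\varphi_3$ match precisely the ways the core can branch at a bottom vertex. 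The combinatorial factor $\prod_e m_e$ and the normalisation $|{\rm Aut}(\Gamma)|^{-1}$ are tracked exactly as in the proof of Lemma~\ref{lem:rt}, and this argument is essentially the same recursive expansion used in Proposition~\ref{pr:echech}, run here to build a pair of holes rather than to fill a fibre of $F_i$.

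The main obstacle is the bookkeeping enforced by the admissibility condition and, equivalently, the independence of the right-hand side from the auxiliary index $I\in\RR\setminus\Sigma$. Because each descending $f$-vertex can a priori be attached on either side of the unique top, the naive expansion overcounts; the admissibility condition singles out one canonical placement of the minimal-index leg relative to the top, and one must check that the recursions above populate exactly the admissible graphs, each once. I expect the key input here to be the exchange identity of Lemma~\ref{lem:exch0}, which encodes that transporting a vertex across the top (interchanging the roles of the two branches, weighted by the edge twists $m_e$ and $m_{e'}$) preserves the total contribution; this is what reconciles the two bullets of the admissibility condition and forces the sum to be independent of $I$, consistently with the fact that the left-hand side does not involve $I$. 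Once this matching is established, comparing the fully expanded form of $(p_1+p_2+\sum_{i\in\Sigma}m_i)\,\v$ term-by-term with $\sum_{\Gamma\in{\rm EP}(\mu,\Sigma,p_1,p_2,I)}\cont(\Gamma)$ completes the induction.
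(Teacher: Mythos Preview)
Your broad strategy---induction on $|\Sigma|$ together with Lemma~\ref{lem:exch0} to handle the dependence on $I$---matches the paper's. However, there is a genuine misreading of the definitions which makes several of your steps wrong as stated, and the overall architecture of your argument differs from the paper's in a way that creates real difficulties.

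\textbf{The structural error.} An expanded pair of holes has \emph{no} $\varphi$-vertices. By definition all legs (including $h_1,h_2$) have positive twists and there are no horizontal edges; consequently no core vertex is a ``bottom'', and the contribution of the first and last core vertices is of $f$-type (with the convention that $p_1,p_2$ play the role of the downward edge twist). Your description of the base case---``two endpoints carry $h_1,h_2$ and flow into the two $\varphi$-vertices''---and of the inductive step---``the two extremal $\varphi$-vertices \ldots\ are produced by the $\varphi$-recursion of Lemma~\ref{lem:indg}''---are therefore incorrect. For $\Sigma=\emptyset$ the only graphs in ${\rm EP}$ are backbone graphs with a \emph{single} core vertex (the top) carrying both $h_1$ and $h_2$; its contribution is $(p_1+p_2)\,f(p_1,p_2,\ldots)$, and Proposition~\ref{pr:volint} for $\mu=(p_1,p_2)$ gives the identity directly.

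\textbf{Architecture.} The paper does not run a single induction producing all admissible graphs for arbitrary $I$. Instead it separates the problem into two independent steps: (i) establish the identity for the special value $I=0$, where the admissibility condition forces $h_2$ to sit on the top and the combinatorics become tractable; (ii) show the right-hand side $S(I)$ is locally constant in $I$ by proving $S(i-\tfrac12)=S(i+\tfrac12)$ for each $i\in\Sigma$. Your proposal attempts both at once, which is why the bookkeeping you flag as the ``main obstacle'' does not resolve. For step (ii) the paper introduces an auxiliary filtration $S(i,\ell)$ indexed by the distance between the top and the vertex carrying the leg $i$, and uses Lemma~\ref{lem:exch0} to slide the top past one core vertex at a time; this is more delicate than a single application of the exchange identity.

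\textbf{A missing ingredient.} In step (i), after decomposing a graph at the first core vertex and resumming the attached subtrees via Lemma~\ref{lem:rt} and the induction hypothesis, the paper invokes Proposition~3.11 of~\cite{CheMoeSauZag} (an identity of the form $\sum_i m_i\,\v(\mu)=\ldots$ over backbone decompositions) to close the recursion. Your proposal does not identify this input, and Proposition~\ref{pr:volint} and Lemma~\ref{lem:indf} alone do not suffice to collapse the sum to $(p_1+p_2+\sum m_i)\,\v$.
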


\begin{proof}
We fix a choice of $\mu, \Sigma, p_1, p_2$. We denote by $S(I)$ the sum on the right-hand side of the identity of the Lemma. This function is locally constant on $\RR\setminus \Sigma$, thus to prove the lemma we proceed in two steps: first, we show that it is valid when $I=0$; then we show that  $S(i-1/2)-S(i+1/2)=0$  for all $i\in \Sigma$.\bigskip

\step{The case $I=0$.} We proceed by induction on the size of $\Sigma$. If $\Sigma$ is empty, then a graph in ${\rm EP}(\mu,\Sigma,p_1,p_2,0)$ is a backbone s.t.\ the legs $h_1$ and $h_2$ are incident to $v_{-1}$. Thus, we may apply Proposition~\ref{pr:volint} for $\mu=(p_1,p_2)$, which implies the lemma in this case.

If $\Sigma$ is non-empty, then the admissibility condition implies that the marking $h_2$ is incident to the top.  Then a graph in ${\rm EP}(\mu,\Sigma,p_1,p_2,0)$ is determined by: \begin{enumerate}
 \item  an element $i\in \Sigma\cup\{h_2\}$;
 \item  an integer $k\geq 1$; 
 \item a partition $\Sigma\setminus \{i\}= \Sigma_1\sqcup \ldots \sqcup \Sigma_k$, and a partition $p_1+m_i-k=p_1'+\ldots+p_k'$ (where $m_i=p_2$ if $i=h_2$)
 \item   a rooted tree in ${\rm RT}(\mu,\Sigma_j,p_j')$ for $1\leq j\leq k$ if $h_2\notin \Sigma_j$, or an element of ${\rm EP}(\mu,\Sigma_j,p_j',p_2,0)$ otherwise.
 \end{enumerate}
 Indeed, with this datum we construct a graph in ${\rm EP}(\mu,\Sigma,p_1,p_2,0)$ by attaching the $k$ graphs of the last part of the data to a vertex of genus $0$ with the markings $h_1$ and $i$. {Here, we replace an half-edges with marking $p_j'$ by an edge~$e_j$ with twist $m_{e_j}=p_j'$ to this vertex with markings $h_1$ and $i$.}
Using this fact, we may rewrite the sum defining $S(0)$ as follows:
 \begin{eqnarray*}
 \sum_{\substack{ i\in \Sigma\\ k\geq 1}}  \!\!\!\!\!\!   & & \!\!\! \sum_{\substack{ \Sigma_1\sqcup \ldots \sqcup \Sigma_k=(\Sigma\setminus\{i\}) \cup\{h_2\}\\ p_1'+\ldots+p_k'=p_1+m_i-k}} \!\!\! \frac{m_i \, f(p_1,m_i,-p_1',\ldots,-p_k')}{(k-1)!} \\
&&\qquad\qquad \times \left(\sum_{\Gamma\in {\rm EP}(\mu,\Sigma_1,p_1',p_2,0)} \!\!\!\!\!\!\!\!\!\!\!\!\cont(\Gamma)\right) \times\ \prod_{j=2}^k \left(\sum_{\Gamma\in {\rm RT}(\mu,\Sigma_j,p_j')} \!\!\!\!\!\!\!\!\! \cont(\Gamma)\right)\\
  \+\sum_{\substack{  k\geq 1}}  \!\!\!\!\!\! && \!\!\! \sum_{\substack{ \Sigma_1\sqcup \ldots \sqcup \Sigma_k=\Sigma\\ p_1'+\ldots+p_k'=p_1+p_2-k}} \!\!\! \frac{(p_1+p_2) \, f(p_1,p_2,-p_1',\ldots,-p_k')}{k!}\, \prod_{j=1}^k \left(\sum_{\Gamma\in {\rm RT}(\mu,\Sigma_j,p_j')} \!\!\!\!\!\!\!\!\! \cont(\Gamma)\right)\!\!\raisebox{-10pt}{.}
 \end{eqnarray*}
 The first sum accounts for the contribution of graphs where $h_2$ is not incident to the same vertex as $h_1$ (thus one of the descendants of the main vertex of genus~$0$ is distinguished as it carries the leg $h_2$) while the second sum accounts for the contribution of graphs with $h_1$ and $h_2$ incident to the same vertex.
  
 Therefore we may compute the sum $S(0)$ recursively by applying Lemma~\ref{lem:rt} and the induction hypothesis to obtain that $S(0)$ is given by 
  \begin{eqnarray*}
 \sum_{\substack{ i\in \Sigma\\ k\geq 1}}  & & \!\!\!\!\!\!\!\!\! \sum_{\substack{ \Sigma_1\sqcup \ldots \sqcup \Sigma_k=(\Sigma\setminus\{i\}) \cup\{h_2\} \\ p_1'+\ldots+p_k'=p_1+m_i-k}} \!\!\! \frac{m_i \, f(p_1,m_i,-p_1',\ldots,-p_k')}{k!} \\
&&\times \, \prod_{j=1}^k \left(p_j'+\sum_{i'\in \Sigma_j} m_{i'}\right) \v\bigl((p_j')+(m_{i'})_{i'\in \Sigma_j}\bigr) \\
  \+ \sum_{k\geq 1}  \!\!\! \!\!\! \!\!\! &&\sum_{\substack{ \Sigma_1\sqcup \ldots \sqcup \Sigma_k=\Sigma\\ p_1'+\ldots+p_k'=p_1+p_2-k}} \!\!\! \frac{(p_1+p_2) \, f(p_1,p_2,-p_1',\ldots,-p_k')}{k!} \\
  &&\times \prod_{j=1}^k \left(p_j'+\sum_{i'\in \Sigma_j} m_{i'}\right) \v\bigl((p_j')+(m_{i'})_{i'\in \Sigma_j}\bigr),
 \end{eqnarray*}
where, again, $m_{h_2}=p_2$. We apply Proposition~3.11 of~\cite{CheMoeSauZag} to deduce that:
$$
S(0)\=\sum_{i\in \Sigma}\left(m_i \, \v(\Sigma+(p_1,p_2))\right) \+ (p_1+p_2)\,\v(\Sigma+(p_1,p_2)),
$$
which is the desired identity.
\bigskip

\step{Crossing an element in $\Sigma$.} We fix $i$ in $\Sigma$. We first remark that the contribution of an element in the sum defining $S(I)$ does not depend on $I$. Indeed, the dependence of $S(I)$ on $I$ is uniquely given by the set of graphs that contribute. Thus we need to determine which graphs contribute to $S(i+1/2)$ but not to $S(i-1/2)$ and conversely. 

{We assume that $i=\min\{\ind(v) \mid v \in C(\Gamma)\}$. Namely, if this is not the case then the same graphs contribute to $S(i+1/2)$ and $S(i-1/2)$. Now,} A graph contributes to $S(i+1/2)$ but not to $S(i-1/2)$ if and only if the label $i$ belongs to $C(\Gamma)$ and $h_2$ is incident to the top. We denote by $S^+$ the sum of the contributions for graphs of this type. Conversely, a graph contributes to $S(i-1/2)$ but not to $S(i+1/2)$ if and only if the label $i$ is incident to the vertex following the top in $C(\Gamma)$. We denote by $S^-$ the sum of the contributions for graphs of this type. We will show that $S^+=S^-$ to finish the proof of the lemma.

To do so, we introduce a family of sets of pre-expanded pairs of holes $S(i,\ell)$ for all $\ell\geq 0$. A pre-expanded pair of holes $\Gamma{\in \mathrm{EP}(\mu,\Sigma,p_1,p_2,I)}$ {compatible with $(m_i)_{i\in \Sigma}+(p_1,p_2)$} belongs to $S(i,\ell)$ if:
\begin{itemize}[leftmargin=25pt]
\item the leg $i$ is incident to a vertex of $C(\Gamma)$ and if a leg $i'\in \Sigma$ is incident to a vertex of $C(\Gamma)$ then $i'>i$.
\item if the top vertex is the $t$-th vertex of the core, and the vertex carrying $i$ the $s$-th of the core then either: $\ell>0$ and the top is the last vertex of the core, {\em OR} we have $t-s+1=\ell$.
\end{itemize}
In particular with this notation we have $S(i,0)=S^-$, while $S(i,\ell)=S^+$ if $\ell$ is sufficiently large. Then we show that:
$$
\sum_{\Gamma \in S(i,\ell)} \cont(\Gamma)\=\sum_{\Gamma \in S(i,\ell+1)} \cont(\Gamma)
$$
for all $\ell\geq 0$.  Indeed, for each graph $\Gamma\in S(i,\ell)$: either the top vertex is the last vertex and then it belongs to $S(i,\ell+1)$ too, or we apply Lemma~\ref{lem:exch0} to the subgraph made of the top vertex and the next vertex in the $C(\Gamma)$. Then  Lemma~\ref{lem:exch0} exchange the roles of these two vertices and thus the difference between the positions of the top vertex and the vertex carrying the $i$-th leg is augmented by $1$. We thus obtain a summation on graphs in $S(i,\ell+1)$. \end{proof}

\begin{proof}[End of the proof of Proposition~\ref{pr:echch}.]  To finish the proof, we simply remark that the datum of an expanded chain is equivalent to the datum of: (i) a chain $\Gamma$; (ii) a rooted tree for each vertex of $F(\Gamma)$; (iii) an expanded pair of holes for each vertex of $P(\Gamma)$. Moreover, applying Lemmas~\ref{lem:rt} and~\ref{lem:eph} provides the equality between the contribution of a vertex in $F(\Gamma)$ or $P(\Gamma)$ and the sum of the contributions of the rooted trees or expanded pairs of holes associated to this vertex. {Note that the factor $m_e$ in the contribution $m_e |\mu(v)| \v(\mu(v))$ of a vertex $v\in F(\Gamma)$ does not occur in Lemma~\ref{lem:rt}, but does occur in the function~$\varphi(\mu(v'))$ in the contribution of $v'$, where $v'$ is the vertex of the core of the chain connected to $v$.}
\end{proof}

\subsection{End of the proof of Theorem~\ref{th:mainint}}  An expanded chain in ${\rm ECH}(\mu)_1$ has only one vertex in $R(\Gamma)$, and thus uniquely determines a backbone graph in  ${\rm BB}(\mu)_0$. Moreover, Proposition~\ref{pr:dint} may be rewritten as:
$$
 d_1(\mu) \= \frac{1}{2 m_1}\sum_{\Gamma\in {\rm ECH}(\mu)_1} \cont(\Gamma).
$$
Thus using Propositions~\ref{pr:echech},~\ref{pr:echch} and~\ref{pr:cchains} successively we obtain:
\begin{eqnarray*}
d_1(\mu)&=& \frac{1}{2 m_1}  \sum_{\Gamma\in {\rm ECH}(\mu)_n} \cont(\Gamma)\\
&=& \frac{1}{2 m_1}  \sum_{\Gamma\in {\rm CH}(\mu)} \widehat\cont(\Gamma)\\
&=&  (-4\pi^2) \, c_1(\mu)\cdot \v(\mu),
\end{eqnarray*}
which is the first identity stated in Theorem~\ref{th:mainint}. If $\mu$ is odd, then the second statement of Theorem~\ref{th:mainint} is obtained similarly by applying the spin counterpart of Propositions~\ref{pr:dint},~\ref{pr:echech},~\ref{pr:echch} and~\ref{pr:cchains}.

\appendix
\section{Character tables of the spin symmetric group and the Sergeev group for \texorpdfstring{$d\leq 5$}{d <= 5}}
For $d\leq 5$, and $G$ being one the groups $\Spin_d,\ASpin_d,\Sergeev_d$ and $\Sergeev^0_d$, we compute the character table of all irreducible \emph{spin} representations, i.e., we assume the central element $\varepsilon\in G$ acts by~$-1$. If the characters are $\chi_1,\chi_2,\ldots$ and the conjugacy classes $\mathcal{C}_1,\mathcal{C}_2,\ldots$, we write
\[\begin{array}[t]{c | c c}\hline
G & \mathcal{C}_1 & \cdots \\
|G| & |\mathcal{C}_1| & \cdots \\ \hline
 \chi_1 & \chi_1(\mathcal{C}_1) & \cdots \\
\vdots & \vdots  &  \\\hline
\end{array}\vspace{5pt}\]
for the character table of $G$. Note that $\chi_i(\epsilon\mathcal{C}_j)=-\chi_i(\mathcal{C}_j)$. Thus, we omit all conjugacy classes for which $\mathcal{C}_j\cap \epsilon \mathcal{C}_j\neq \emptyset$, and else pick only one of the two conjugacy classes $\mathcal{C}_j$ and $\varepsilon \mathcal{C}_j$.  Row and column orthogonality relations are satisfied, i.e.
\begin{align*}
& \sum_{i} |\mathcal{C}_j|\,\chi_i(\mathcal{C}_j)\,\overline{\chi_i(\mathcal{C}_{j'})} \= \frac{1}{2}|G|\delta_{j,j'} \text{ for all } j,j'\\
 &\sum_{j} |\mathcal{C}_j|\,\chi_i(\mathcal{C}_j)\,\overline{\chi_{i'}(\mathcal{C}_{j})} \= \frac{1}{2}|G|\delta_{i,i'} \text{ for all } i,i'.
\end{align*}
The first factor $\frac{1}{2}$ appears because we omit half of the conjugacy classes, as explained before. The second factor $\frac{1}{2}$ appears because we omit the non-spin representations, which correspond to representations of the quotient $G/\epsilon$.\medskip

\begin{longtable}[l]{l l l l}
\step{${d=1}$}\\\nopagebreak
$\begin{array}[t]{c | c}\hline
\Spin_1 & e \\
2 & 1 \\ \hline
 1 & 1 \\\hline
\end{array}$ & \quad
$\begin{array}[t]{c}\\[-4	pt] \ASpin_1 \simeq \Spin_1\end{array}$ &\quad
$\begin{array}[t]{c | c c} \hline
\Sergeev_1 & e & C_{e,1} \\
4 & 1 & 1  \\\hline
1+ & 1 & +\ii \\ 
 1- & 1  & -\ii \\\hline
\end{array}$ &\quad
$\begin{array}[t]{c | c c} \hline
\Sergeev_1^0 & e \\
2 & 1 \\ \hline
1 & 1\\ \hline
\end{array}$
\end{longtable}
\begin{longtable}[l]{l l}
\step{${d=2}$}\\\nopagebreak
$\begin{array}[t]{c | c c} \hline
\Spin_2 & e & (12) \\
4 & 1 & 1 \\\hline
2+ & 1 & +\ii \\ 
 2- & 1  & -\ii \\\hline
\end{array}$&
$\begin{array}[t]{c | c c} \hline
\ASpin_2 & e \\
2 & 1 \\ \hline
2 & 1\\ \hline
\end{array}$ 
\\[-2pt]
$\begin{array}[t]{c | c c} \hline
 \Sergeev_2 & e & C_{(12),1} \\
16 & 1 & 2 \\\hline
2+ & 2 & +\ii\sqrt{2} \\ 
2- & 2  & -\ii\sqrt{2} \\\hline
\end{array}$&
$\begin{array}[t]{c | c c} \hline
 \Sergeev_2^0 & e \\
 8 & 1 \\ \hline
2 & 2\\ \hline
\end{array}$
\\[65pt]
\step{${d=3}$}\\\nopagebreak
$\begin{array}[t]{c | c c c} \hline
\Spin_3 & e & (123) & (12) \\
12 & 1 & 2 & 3 \\\hline
3 & 2 & 1 & 0 \\ 
2,1+ & 1 & -1 & +\ii \\ 
2,1- & 1 & -1 & -\ii \\\hline
\end{array}$&
$\begin{array}[t]{c | c c c} \hline
\ASpin_3 & e & (123) & (321) \\
6 & 1 & 1 & 1 \\\hline
3+ & 1 & \frac{1}{2}+\frac{1}{2}\ii\sqrt{3} & \frac{1}{2}+\frac{1}{2}\ii\sqrt{3} \\ 
3- & 1 & \frac{1}{2}-\frac{1}{2}\ii\sqrt{3} & \frac{1}{2}-\frac{1}{2}\ii\sqrt{3} \\ 
2,1 & 1 & -1 & -1 \\\hline
\end{array}$ 
\\[-2pt]
$\begin{array}[t]{c | c c c} \hline
\Sergeev_3 & e & C_{(123)} & C_{(123),1}\\
 96 & 1 & 8 & 8 \\\hline
3+ & 4  & 1 & +\ii\sqrt{3} \\ 
3- & 4  & 1 & -\ii\sqrt{3} \\
2,1 & 4 & -2 & 0 \\\hline
\end{array}$&
$\begin{array}[t]{c | c c c } \hline
\Sergeev_3^0 & e & C_{(123)} & C_{(12)}\\
48 & 1 & 8 & 6 \\ \hline
3 & 4 & 1 & 0 \\
2,1+ & 2 & -1 & +\ii\sqrt{2} \\
2,1- & 2 & -1 & -\ii\sqrt{2}\\ \hline
\end{array}$
\\[75pt]
\step{${d=4}$}\\\nopagebreak
$\begin{array}[t]{c | c c c} \hline
 \Spin_4  & e & (123) & (1234) \\
48 & 1 & 8 & 6 \\\hline
4+ & 2 & 1 & +\ii\sqrt{2} \\ 
4- & 2 & 1 & -\ii\sqrt{2} \\ 
3,1 & 4 & -1 & 0 \\\hline
\end{array}$&
$\begin{array}[t]{c | c c c} \hline
\ASpin_4 & e & (123) & (321) \\
24 & 1 & 4 & 4 \\\hline
4 & 2 & 1 & 1 \\
3,1+ & 2 & -\frac{1}{2}+\frac{1}{2}\ii\sqrt{3} & -\frac{1}{2}+\frac{1}{2}\ii\sqrt{3} \\ 
3,1- & 2 & -\frac{1}{2}-\frac{1}{2}\ii\sqrt{3} & -\frac{1}{2}-\frac{1}{2}\ii\sqrt{3} \\ \hline
\end{array}$ 
\\[-2pt]
$\begin{array}[t]{c | c c c} \hline
 \Sergeev_4 & e & C_{(123)} & C_{(1234),1}\\
768 & 1 & 32 & 48 \\\hline
4+ & 8  & 2 & +2\ii \\ 
4- & 8  & 2 & -2\ii \\
3,1 & 16 & -2 & 0 \\\hline
\end{array}$&
$\begin{array}[t]{c | c c c } \hline
 \Sergeev_4^0 & e & C_{(123)}(a) & C_{(123)}(b) \\
48 & 1 & 16 & 16 \\ \hline
4 & 8 & 2 & 2 \\
3,1+ & 8 & -1+\ii\sqrt{3}  &-1+\ii\sqrt{3} \\
3,1- & 8 & -1-\ii\sqrt{3} & -1-\ii\sqrt{3}\\ \hline
\end{array}$
\end{longtable}
\begin{longtable}[l]{l l}
\step{${d=5}$}\\\nopagebreak
$\begin{array}[t]{c | c c c c c} \hline
\Spin_5  & e & (12345) & (123) & (1234) & (123)(45)  \\
240 & 1 & 24 & 20  & 30 & 20 \\\hline
5    & 4 & 1  & 2  & 0 & 0 \\ 
4,1+ & 6 & -1 & 0  & +\ii\sqrt{2}  & 0\\ 
4,1- & 6 & -1 & 0  & -\ii\sqrt{2}  & 0\\
3,2+ & 4 & 1  & -1 & 0 & +\ii\sqrt{3}\\ 
3,2- & 4 & 1  & -1 & 0 & -\ii\sqrt{3} \\\hline
\end{array}$\\[-2pt]
$\begin{array}[t]{c | c c c c} \hline
\ASpin_5 & e & (12345) & (54321) & (123)  \\
120 & 1 & 12 & 12 & 20 \\\hline
5+ & 2 & \frac{1}{2}+\frac{1}{2}\ii\sqrt{5} & \frac{1}{2}+\frac{1}{2}\ii\sqrt{5} & 1  \\
5- & 2 & \frac{1}{2}-\frac{1}{2}\ii\sqrt{5} & \frac{1}{2}-\frac{1}{2}\ii\sqrt{5} & 1 \\
4,1 & 6 & -1 & -1 & 0  \\ 
3,2 & 4 & 1  & 1  & -1 \\ \hline
\end{array}$ 
\\[-2pt]
$\begin{array}[t]{c | c c c c c} \hline
 \Sergeev_5 & e & C_{(12345)} & C_{(123)} & C_{(12345),1}\\
7680 & 1 & 384 & 80 & 384 \\\hline
5+ & 16 & 1 & 4 & +\ii\sqrt{5}\\
5- & 16 & 1 & 4 & -\ii\sqrt{5}\\
4,1 & 48 & -2 & 0 & 0\\
3,2 & 32 & 2 & -4 & 0\\ \hline
\end{array}$\\[-2pt]
$\begin{array}[t]{c | c c c c c} \hline
\Sergeev_5^0  & e & C_{(12345)} & C_{(123)} & C_{(1234)} & C_{(123)(45)}  \\
3840 & 1 & 384 & 80  & 240 & 160 \\\hline
5    & 16 & 1  & 4  & 0 & 0 \\ 
4,1+ & 24 & -1 & 0  & +2\ii  & 0\\ 
4,1- & 24 & -1 & 0  & -2\ii  & 0\\
3,2+ & 16 & 1  & -2 & 0 & +\ii\sqrt{6}\\ 
3,2- & 16 & 1  & -2 & 0 & -\ii\sqrt{6} \\\hline
\end{array}$
\end{longtable}


\newcommand{\etalchar}[1]{$^{#1}$}

\end{document}